\def\Aut{\operatorname{Aut}}
\def\End{\operatorname{End}}
\def\dim{\operatorname{dim}}
\def\id{\operatorname{id}}
\def\Ber{\operatorname{Ber}}
\def\Ind{\operatorname{Ind}}
\def\Ad{\operatorname{Ad}}
\def\Hom{\operatorname{Hom}}
\def\Rep{\operatorname{Rep}}
\def\C{\mathbb{C}}
\def\Z{\mathbb{Z}}
\def\G{\mathbb{G}}
\def\F{\mathbb{F}}
\def\A{\mathbb{A}}
\def\OO{\mathcal{O}}
\def\UU{\mathcal{U}}
\def\EE{\mathcal{E}}
\def\DD{\mathcal{D}}
\def\NN{\mathcal{N}}
\def\CC{\mathcal{C}}
\def\II{\mathcal{I}}
\def\c{\mathfrak{c}}
\def\m{\mathfrak{m}}
\def\p{\mathfrak{p}}
\def\q{\mathfrak{q}}
\def\g{\mathfrak{g}}
\def\h{\mathfrak{h}}
\def\ft{\mathfrak{t}}
\def\k{\mathfrak{k}}
\def\l{\mathfrak{l}}
\def\s{\mathfrak{s}}
\def\o{\mathfrak{o}}
\def\d{\partial}
\def\ol{\overline}
\def\mod{\text{ mod}}
\def\Spec{\operatorname{Spec}}
\def\ul{\underline}
\def\Mod{\text{Mod}}
\def\sub{\subseteq}
\def\sVec{\text{sVec}}
\newtheorem{thm}{Theorem}[section]
\newtheorem{cor}[thm]{Corollary}
\newtheorem{lemma}[thm]{Lemma}
\newtheorem{conj}[thm]{Conjecture}
\newtheorem{prop}[thm]{Proposition}
\theoremstyle{definition}
\newtheorem{definition}[thm]{Definition}
\theoremstyle{remark}
\newtheorem{remark}[thm]{Remark}
\newtheorem{example}[thm]{Example}
\numberwithin{equation}{section}
\begin{document}

\title{On symmetries of the Duflo-Serganova functor}

\author{Alexander Sherman}

\begin{abstract} We initiate a study of the symmetries of the Duflo-Serganova functor, $DS$.  In particular we give new constructions of Lie superalgebras, Lie supergroups, and associative superalgebras which act on this functor.  The main result is a realization that in the Kac-Moody setting, we find new odd infinitesimal symmetries of $DS$.  In addition, we connect our work to a computation of Heidersdorf and Weissauer which computed $DS_x$ for a maximal rank $x$ on Kac-modules for $GL(n|n)$, and extend the ideas and results to $P(n)$.  
\end{abstract}

\maketitle
\pagestyle{plain}

\section{Introduction}

In the study of the representation theory of Lie superalgebras, a large role has been played by the Duflo-Serganova functor, originally introduced in \cite{duflo2005associated} (see \cite{ds_bigpaper} for a recent survey of this functor and its applications). Given a Lie superalgebra $\g$ and an odd element $u\in\g_{\ol{1}}$, the condition that $[u,u]=0$ is nontrivial, and implies that $u$ defines a square zero operator on every representation of $\g$.  For a $\g$-module $M$, we write $M_u$ for the cohomology of this operator.  The functor $DS_u:M\mapsto DS_uM=M_u$ is called the Duflo-Serganova functor.   

More generally, if $u\in\g_{\ol{1}}$ is such that $c=[u,u]$ acts semisimply on a $\g$-module $M$, $u$ will define a square-zero operator on $M^c$ (the $c$-invariants on $M$), and we again denote the cohomology of $u$ on $M^c$ by $M_u=DS_uM$.  

A priori $M_u$ is simply a super vector space, but one of the first observations  made in \cite{duflo2005associated} is that under the adjoint action (assuming $\operatorname{ad}c$ is semisimple), $\g_u$ is a Lie superalgebra, and $M_u$ will have the natural structure of a $\g_u$-module.  In this way, $DS_u$ can be viewed as a functor from the category of $\g$-modules to the category of $\g_u$-modules.

The Duflo-Serganova functor has the power to reduce questions about modules over a given Lie superalgebra $\g$ to that over a smaller, often less complex Lie superalgebra $\g_u$.  It preserves the superdimension of a module, and has been used to compute superdimensions of simple modules for finite-dimensional Kac-Moody Lie superalgebras (see for instance \cite{HW}, \cite{entovaaizenbud2019dufloserganova}, and \cite{serganova_superdim}).  It defines a correspondence of central characters, and thus of blocks, between $\g$ and $\g_u$ (see Sec. 6 of \cite{ds_bigpaper}).  One can use $DS_u$ to define a homomorphism between (reduced) Grothendieck rings for $\g$ and $\g_u$ respectively, which is important in the study of the $\s\l(\infty)$-module structure on category $\OO$ for $\g\l(m|n)$ (see \cite{HPS}).  The functor $DS_u$ is even used to construct abelian envelopes of Deligne categories (see \cite{EHS}).  For more on applications of this functor, we refer to \cite{ds_bigpaper}.

Despite the intensive study of the Duflo-Serganova functor, certain natural, additional symmetries on $DS_u$ have been overlooked.  As we will demonstrate, one can construct larger (than $\g_u$) Lie superalgebras (and supergoups) which naturally act on $M_u$ for a $\g$-module $M$. We will see in examples that this larger Lie superalgebra tends to contain $\g_u$ as a subalgebra, and often takes the form $\g_u\times \C^{0|r}$, where $\C^{0|r}$ is an odd abelian Lie superalgebra of rank $r$.  Here $r$ is a statistic of $u$ that could be called its `rank'.  In certain cases we prove that this larger superalgebra acts faithfully on $M_u$ for certain modules $M$, meaning that these extra (infinitesimal) symmetries are non-trivial and of interest to study.

We now explain our two main constructions of additional symmetries, and give the major results obtained for each.  Afterward we will discuss (potential) applications and connections of these ideas.

\subsection{Construction of new Lie superalgebras}\label{section_intro_constr_lie_superalgeb} Let $u\in\g_{\ol{1}}$ be such that $c:=[u,u]$ acts semisimply on $\g$ under the adjoint representation, and let $\CC$ be a category of $\g$-modules on which $c$ acts semisimply on every module.  Write $\g^c$ for the centralizer of $c$ in $\g$, and let $\k\sub[u,\g^c]$ be a Lie superalgebra such that the restriction of any module in $\CC$ to $\k$ is semisimple over $\k$.  Then for any module $M$ in $\CC$ we have a natural isomorphism $DS_uM\cong DS_u(M^{\k})$ (Lemma \ref{invariance_ds_commute}).  Thus the Lie superalgebra $\g(u,\k):=(\g^\k/Z(\k))_u$ acts on $DS_u$, where $Z(\k)$ denotes the center of $\k$.  This Lie superalgebra always contains $\g_u$ as a subalgebra, and is often larger, as the following theorem demonstrates:

\begin{thm}\label{thm_intro_lie_super}
	Suppose that $\g$ is a finite-dimensional, symmetrizable Kac-Moody Lie superalgebra, and suppose that $u\in\g_{\ol{1}}$ has that $[u,u]$ is semisimple in $\g_{\ol{0}}$, and that the rank of $u$ is $r$ (see Lemma \ref{lemma_form} for definition of rank).  Then the largest Lie superalgebra of the form $\g(u,\k)$ which can be constructed is isomorphic to:
	\[
	\g_u\times\C^{0|r}.
	\]
\end{thm}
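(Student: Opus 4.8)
The plan is to translate the problem into the linear algebra of the single complex $(\g^c,\operatorname{ad}_u)$ equipped with the non-degenerate invariant supersymmetric form that $\g$ carries by symmetrizability, and to extract $\g(u,\k)$ from a long exact sequence. First I record the ambient structure on $\g^c$: writing $d=\operatorname{ad}_u$, one has $d^2=\tfrac12\operatorname{ad}_{[u,u]}=0$ on $\g^c$, the form restricts non-degenerately (as $c$ is semisimple and even, so opposite $\operatorname{ad}_c$-eigenspaces pair up and $\g^c$ pairs with itself), and $d$ is graded skew-adjoint, so the coboundary space $B=[u,\g^c]$ is isotropic with $\ker d=B^\perp$ and $\g_u=B^\perp/B$. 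Skew-adjointness identifies $d\colon\g^c_{\ol0}\to\g^c_{\ol1}$ with the signed adjoint of $d\colon\g^c_{\ol1}\to\g^c_{\ol0}$, so $[u,\g^c_{\ol0}]$ and $[u,\g^c_{\ol1}]$ share a common dimension, which by \Cref{lemma_form} is the rank $r$. Two elementary facts make the construction run: $u$ commutes with every coboundary, since $[u,[u,x]]=\tfrac12[[u,u],x]=0$ on $\g^c$, so $u\in\g^\k$ for every $\k\subseteq B$; and $Z(\k)\subseteq\k\subseteq B\subseteq\ker d$ is central in $\g^\k$, so $\bar u$ descends to $\g^\k/Z(\k)$ and $\g(u,\k)=H\big(\operatorname{ad}_{\bar u},\,(\g^\k\cap\g^c)/Z(\k)\big)$.

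The first real step is that $Z(\k)$ may be taken purely even: an odd $w\in Z(\k)$ commutes with itself, so $[w,w]=0$ and $w$ acts by a square-zero, hence nilpotent, operator on every module in $\CC$; semisimplicity of the $\k$-action then forces $w$ to act by zero, so only the even part of $Z(\k)$ survives as a symmetry and we may assume $Z(\k)\subseteq\g^c_{\ol0}$. Now set $A=\g^\k\cap\g^c$, a $d$-stable subcomplex of $\g^c$, and consider the short exact sequence of complexes $0\to Z(\k)\to A\to A/Z(\k)\to 0$ in which $Z(\k)$ carries the zero differential. The associated six-term exact hexagon computes $\g(u,\k)=H(A/Z(\k))$ in terms of $H(A)$ and a connecting map $\partial$ of odd degree. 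Here I invoke \Cref{invariance_ds_commute} for the adjoint representation (legitimate because the even toral $\k$ acts semisimply on $\g$): it gives that the inclusion $A\hookrightarrow\g^c$ induces an isomorphism $H(A)\xrightarrow{\sim}\g_u$. In particular every element of $Z(\k)\subseteq B$, being a coboundary in $\g^c$, maps to $0$ in $H(A)$, so the map $Z(\k)\to H(A)$ in the hexagon vanishes.

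With that vanishing and $Z(\k)$ even, the hexagon collapses to a short exact sequence $0\to\g_u\to\g(u,\k)\to\Pi Z(\k)\to 0$ of super vector spaces, where $\Pi$ is the parity shift; thus $\g(u,\k)\cong\g_u\oplus\C^{0|s}$ with $s=\dim Z(\k)$, and the copy of $\g_u$ is exactly the subalgebra the construction always produces. Since $Z(\k)$ is an even subspace of the coboundaries $B$, we have $s=\dim Z(\k)\le\dim[u,\g^c_{\ol1}]=r$, which is the upper bound. For the matching lower bound I would use \Cref{lemma_form} to put $u$ in normal form and thereby produce an explicit $\k$ realizing $s=r$: an $r$-dimensional even, isotropic, toral abelian subalgebra of coboundaries that acts semisimply on $\CC$ and has $Z(\k)=\k$. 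For this $\k$ the distinguished new class is $\bar u$ itself (since $c=[u,u]\in Z(\k)$ makes $[\bar u,\bar u]=0$), and the remaining $r-1$ arise the same way from the other generators of $Z(\k)$. Finally, to promote the vector-space splitting to the direct product $\g_u\times\C^{0|r}$ I would check on representatives that the $r$ new odd classes square and bracket to zero and centralize $\g_u$; isotropy of $B$ and invariance of the form should force the relevant brackets into $Z(\k)+\im d$, so they vanish in cohomology.

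The main obstacle is the lower bound together with the product structure, and the reason is that $\g^\k$ is the Lie-theoretic centralizer of $\k$, which is strictly smaller than $\k^\perp$ and is not controlled by the form alone. One must exhibit a single even $\k$ of full dimension $r$ that is simultaneously an honest subalgebra acting semisimply on $\CC$, has $Z(\k)=\k$, and has its coboundary generators hit from within $A=\g^\k\cap\g^c$ so that the isomorphism $H(A)\cong\g_u$ persists. This is precisely where finite-dimensionality and symmetrizability are used: \Cref{lemma_form} supplies a normal form for $u$ in which the rank-$r$ directions are carried by commuting even root vectors, so that the even coboundaries already form an abelian, toral subalgebra and the centralizer $\g^\k$ retains enough of $\g^c$ to leave the cohomology unchanged. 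I expect the verification of the product, rather than merely extension, structure to follow from the same normal form, using skew-adjointness of $\operatorname{ad}_u$ to kill the cross-brackets.
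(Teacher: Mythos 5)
Your homological reformulation gets the structural part right, and in fact sharpens what the paper proves along the way: the evenness of $Z(\k)$, the vanishing of the map $Z(\k)\to H(A)$ via Lemma \ref{invariance_ds_commute} applied to the adjoint representation, and the resulting short exact sequence $0\to\g_u\to\g(u,\k)\to\Pi Z(\k)\to 0$ are all correct, and together they refine the paper's Lemma \ref{quotient_ds_is_larger} (which only establishes the injection $\g_u\embedsto\g(u,\k)$). The genuine gap is in your upper bound. You bound $\dim Z(\k)$ by $\dim[u,\g^c_{\ol{1}}]$ and assert that this dimension equals the rank $r$; that identity is false. Take $\g=\g\l(n|n)$ with $n\geq 2$ and $u$ the odd matrix with $I_n$ in the upper-right block, so $[u,u]=0$ and $u$ has rank $r=n$ in the sense of Lemma \ref{lemma_form}. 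Then $[u,\g_{\ol{1}}]=\{\operatorname{diag}(C,C):C\in\g\l(n)\}$ has dimension $n^2$, and likewise $\dim[u,\g_{\ol{0}}]=n^2$. Skew-adjointness does give $\dim[u,\g^c_{\ol{0}}]=\dim[u,\g^c_{\ol{1}}]$, but neither equals $r$: the rank of Lemma \ref{lemma_form} counts mutually orthogonal isotropic roots, not the rank of $\operatorname{ad}(u)$. So your count only yields $\dim Z(\k)\leq n^2$, which does not prove that $\g_u\times\C^{0|r}$ is largest -- and maximality is the entire content of the theorem.

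What actually bounds $\dim Z(\k)$ is not the linear algebra of the form but the semisimplicity constraint on $\k$: $Z(\k)$ is a toral subalgebra of $\g_{\ol{0}}$ contained in $[u,\g^c_{\ol{1}}]$, and one must show every such toral subalgebra has dimension at most $r$. (In the example above $[u,\g_{\ol{1}}]$ is a copy of $\g\l(n)$ embedded diagonally, whose toral subalgebras do have dimension at most $n=r$ -- but that is a root-theoretic fact, invisible to a dimension count of the ambient space.) This is precisely where the paper does its work: it first reduces to $\k=\ft$ toral via the injectivity of $\phi_u:\g(u,\k)\to\g(u,\ft)$ and a monotonicity lemma for nested toral subalgebras, and then uses the normal form of Lemma \ref{lemma_form} to show that $\ft'=\sum_i[\g_{\alpha_i},\g_{-\alpha_i}]$ is a maximal toral subalgebra of $[u,\g^c]$ with $\c(\ft')\cong\g_u\times\g\l(1|1)^r$ (Theorem \ref{thm construction 1}). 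From that decomposition both the bound and the direct product structure -- not merely an extension -- fall out at once, since the factors commute on the nose. Your last step leaves that product structure unverified as well ("should force the relevant brackets into $Z(\k)+\im d$"); I do not see how isotropy and invariance of the form alone give it, since the vanishing of brackets like $[v_{\alpha_i},v_{\alpha_j}]$ comes from $\alpha_i+\alpha_j$ not being a root, i.e., from the same root-theoretic input you would need for the upper bound.
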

	Similar results are also proven for $\q(n)$ and $\p(n)$.  The main consequence of \cref{thm_intro_lie_super} is that we obtain an action of $\C^{0|r}$ on $M_u$ that commutes with the action of $\g_u$.
	
\subsection{Construction of a new supergroup}\label{sec_intro_supergroup} The construction given in \cref{section_intro_constr_lie_superalgeb} is very algebraic, and somewhat ad hoc; it requires making a choice of $\k$, for instance, and not all $\k$ give isomorphic Lie superalgebras $\g(u,\k)$.

We now present a more geometric construction of new symmetries, which is very natural.  Let $G$ be an algebraic supergroup; for an element $u\in\g_{\ol{1}}$ we may consider its adjoint vector field $u_{ad}$ on the Hopf superalgebra $\C[G]$, the coordinate ring of $G$.  If $c=[u,u]$ is such that its adjoint vector field acts semisimply on $\C[G]$, then $DS_{u_{ad}}\C[G]$ will again be a supercommutative Hopf superalgebra, and thus is given by the functions on some algebraic supergroup, which we call $\widetilde{G_u}$.  By functoriality, the supergroup $\widetilde{G_u}$ will naturally act on $DS_u$.  We have the following computations of $\widetilde{G_u}$ in certain cases:
\begin{thm}\label{thm_intro_ds_supergroup}
	\begin{enumerate}
		\item Let $u\in\g\l(m|n)_{\ol{1}}$ be such that $[u,u]=0$ and is of rank $r$; then
		\[
		\widetilde{GL(m|n)_u}\cong GL(m-r|n-r)\times\G_{a}^{0|r}.
		\]
		\item Let $u\in\q(n)_{\ol{1}}$ be such that $[u,u]=0$ and of rank $r$; then
		\[
		\widetilde{Q(n)_u}\cong Q(n-2r)\times\G_a^{0|r}.
		\]
	\end{enumerate}
\end{thm}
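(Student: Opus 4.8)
The plan is to compute the Hopf superalgebra $DS_{u_{ad}}\C[GL(m|n)]$ directly on generators and identify it with $\C[GL(m-r|n-r)]\otimes\C[\G_a^{0|r}]$; part (2) will then follow by the same method adapted to the queer symmetry. First I would use \cref{lemma_form} to put $u$ in standard rank-$r$ normal form, so that, for a homogeneous basis $e_1,\dots,e_m,f_1,\dots,f_n$, the operator $u$ sends $e_p\mapsto f_p$ for $p\le r$ and annihilates all other basis vectors; thus $V_u\cong\C^{(m-r)|(n-r)}$ is spanned by the classes of $e_{r+1},\dots,e_m,f_{r+1},\dots,f_n$. Since $DS_u$ is symmetric monoidal it commutes with $\Sym$, so on the polynomial ring $\C[\operatorname{Mat}(m|n)]=\Sym\big((\End V)^{\ast}\big)$ we obtain $DS_{u_{ad}}\C[\operatorname{Mat}(m|n)]=\Sym\big((\End V_u)^{\ast}\big)=\C[\operatorname{Mat}(m-r|n-r)]$, using $DS_u(\End V)=\End(V_u)$. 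The work is therefore concentrated in the passage from $\operatorname{Mat}$ to the group, i.e.\ in the localization $\C[GL(m|n)]=\C[\operatorname{Mat}(m|n)][(\det A)^{-1},(\det D)^{-1}]$, where $A,D$ are the even--even and odd--odd diagonal blocks.

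The crucial point, and the source of the extra factor, is that although the Berezinian is conjugation invariant and hence a $u_{ad}$-cocycle, the individual block determinants $\det A$ and $\det D$ that one must invert to define $GL(m|n)$ are not cocycles. Consequently $DS_{u_{ad}}$ does not commute with this localization, and new classes appear. I would isolate these through the rank-one local model $GL(1|1)$: writing the generic matrix as $\left(\begin{smallmatrix} a&\beta\\ \gamma&d\end{smallmatrix}\right)$ one computes the odd square-zero derivation $u_{ad}(a)=u_{ad}(d)=-\beta$, $u_{ad}(\gamma)=d-a$, $u_{ad}(\beta)=0$, and a direct cohomology computation gives $DS_{u_{ad}}\C[GL(1|1)]\cong\C[\theta]$, the nonzero odd generator being represented by the residue-type class $a^{-1}\beta$, whose survival is forced by the Laurent localization inverting $a$ and $d$. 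For general $r$ I expect each of the $r$ coupled pairs $(e_p,f_p)$ to contribute exactly one such odd residue class $\theta_p$, while the entries $t_{ij}$ with $i,j$ among the surviving indices are cocycles and, together with the (now cocycle) determinants of the surviving diagonal blocks, generate a copy of $\C[GL(m-r|n-r)]$.

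The final step is to assemble the Hopf structure. The surviving matrix coefficients satisfy the matrix comultiplication $\Delta(t_{ij})=\sum_k t_{ik}\otimes t_{kj}$, so they cut out the factor $GL(m-r|n-r)$; it then remains to show that each residue class $\theta_p$ is odd, primitive, square-zero, and central modulo the surviving block, so that the $\theta_p$ generate a commuting copy of $\C[\G_a^{0|r}]$ and the coproduct splits as a direct product. I would verify primitivity using the K\"unneth identification $DS_{u_{ad}}\C[G\times G]=DS_{u_{ad}}\C[G]\otimes DS_{u_{ad}}\C[G]$ together with the matrix coproduct on the $\beta$-coordinates. For part (2) the same strategy applies to $Q(n)$: one inverts the single even block determinant $\det A$ (again not a cocycle), the monoidal computation gives $DS_u\q(n)=\q(n-2r)$ for the surviving part (since $\operatorname{rank}$ in \cref{lemma_form} removes $2r$ from the queer standard module), and the $r$ residue classes again assemble into $\G_a^{0|r}$.

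The main obstacle I anticipate is the localization cohomology in the higher-rank case: controlling the interaction of the $r$ coupled pairs under the full determinant localization (as opposed to a naive product of rank-one models), proving that no further classes are created by cross terms, and verifying that the residue classes are genuinely primitive and commute with the $GL(m-r|n-r)$ factor, so that $\widetilde{G_u}$ is a direct product rather than a nontrivial extension.
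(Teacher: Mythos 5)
Your opening moves are sound and agree in spirit with the paper: the normal form for $u$, the monoidal computation $DS_{u_{ad}}\C[\operatorname{Mat}(m|n)]\cong\C[\operatorname{Mat}(m-r|n-r)]$, the observation that $\det A$ and $\det D$ fail to be cocycles while the Berezinian survives, and the $GL(1|1)$ local model, whose answer matches (up to conventions) the paper's warm-up class $x^{-1}(\eta-\lambda\xi)$. The genuine gap is exactly the step you flag as your ``main obstacle,'' and it is not merely unfinished: the structure you predict there is wrong for $r\geq 2$. You expect the new classes to be $r$ linear residue classes $\theta_p$, one per coupled pair $(e_p,f_p)$, and you plan to show that no further classes are created by cross terms. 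In the paper's actual computation, $u_{ad}$ is split as a sum $D_1+D_2$ of two supercommuting differentials, where $D_1$ is a Koszul differential on a regular sequence and $D_2$ becomes, after collapsing the resulting double complex along $D_1$, a relative algebraic de Rham differential; the outcome is $\C[GL(m-r|n-r)]\otimes H^*_{dR}\bigl(GL(r_1)\times GL(r_2)\bigr)$, where $r_1+r_2=r$ records how many $E$-type and $F$-type root vectors occur in $u$ (see Example \ref{full rank gl computation example} for the pure case). The exterior generators of $H^*_{dR}(GL(r_1))$ sit in cohomological degrees $1,3,\dots,2r_1-1$: each $GL$ factor contributes only one degree-one residue class $\operatorname{tr}(g^{-1}dg)$, and all remaining generators are represented by traces of higher odd powers $\operatorname{tr}\bigl((g^{-1}dg)^{2k-1}\bigr)$, which are built precisely out of cross terms between your pairs. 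So for $r\geq 2$ your verification plan would have you hunting for linear classes that do not exist and proving the vanishing of classes that do exist; the total count $r$ comes out right, but not one class per pair and not in degree one.

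The missing idea, then, is a device that actually computes the cohomology of the localized algebra, since (as you correctly note) cohomology does not commute with inverting $\det A$ and $\det D$, and there is no general principle assembling the higher-rank answer from rank-one local models. The paper's Koszul-plus-de Rham decomposition, with the attendant bigrading and spectral sequence collapse, is exactly such a device, and it also resolves your Hopf-structure step for free: once the answer is identified with the de Rham cohomology of a reductive group, the classical Hopf structure theorem says it is an exterior algebra on primitive generators, so $\widetilde{G_u}$ splits as a direct product $GL(m-r|n-r)\times\G_{a}^{0|r}$ with no case-by-case primitivity or centrality checks. The same mechanism gives part (2): for $Q(n)$ the Koszul step leaves a relative de Rham complex over $\C[Q(n-2r)]$, and the de Rham cohomology of the surviving $GL(r)$ of even coordinates produces the $\G_a^{0|r}$ factor.
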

In the above $\G_{a}^{0|r}$ is the odd abelian Lie supergroup of rank $r$.  For a few more cases which we consider, see Theorem \ref{main_thm_supergroups}.  Note also that in \cite{localization} both $\widetilde{GL(m|n)_u}$ and $\widetilde{Q(n)_u}$ have been computed for any $u$ with $[u,u]$ semisimple in $\g_{\ol{0}}$, using \cref{thm_intro_ds_supergroup}; the general answer takes the same form as above.  

\subsection{Relation to \cite{HW}}\label{sec_intro_kac_mod} The first example (known to this author) of an approach to construct new symmetries was given in Section 26 of \cite{HW}, where for a certain maximal rank $u\in\g\l(n|n)_{\ol{1}}$ with $[u,u]=0$, they showed that there is a natural action of a Grassmann algebra on $n$ generators on $DS_u$; this is in spite of $\g_u$ being trivial for this choice of $u$.  They computed how this exterior algebra acts on $DS_u$ applied to Kac modules, showing in fact that certain Kac modules (the `maximally atypically' ones) become isomorphic to the regular representation of the exterior algebra under the Duflo-Serganova functor $DS_u$. 

We connect this computation of \cite{HW} to our approach, showing that one can view their work as computing the action of the enveloping algebra of a Lie supergroup that arises naturally from the Duflo-Serganova functor (see Section 6).  Further, the technique of Heidersdorf and Weissauer may be extended to the periplectic Lie superalgebra with its so-called thin and thick Kac-modules. In this case we obtain new computations of the action of the Duflo-Serganova functor for certain maximal rank elements in $\p(n)$ (see Theorem \ref{DS computation p case}).  See \cite{entovaaizenbud2019dufloserganova} for the computation of the rank one DS functor on simple modules for $\p(n)$. 

\subsection{Future directions: Kac modules}  We point toward two directions in the applications of our ideas.  The first relates to \cref{sec_intro_kac_mod}, and it is the question of understanding how Kac modules for type I superalgebras behave under the application of the Duflo-Serganova functor.  So far nothing is known in general, except the following result: for $u\neq0$ with $[u,u]=0$, if $L$ is any simple $\g_u$-module, and $K$ is a Kac-module for $\g$, then $[DS_uK:L]=[DS_uK:\Pi L]$ (see Section 8.5 of \cite{ds_bigpaper}).   An equivalent formulation is that $ds_u[K]=0$ in the reduced Grothendieck ring of $\g$-modules (see Section 2.3 and Section 8 of \cite{ds_bigpaper}). 

The fact that every simple $\g_u$-module $L$ has trivial supermultiplicity (i.e. $[DS_uK:L]-[DS_uK:\Pi L]$) in $DS_uK$  suggests the possibility of an odd supergroup that is acting freely on $DS_uK$ and that is commuting with the action of $\g_{u}$.  Indeed in this case, $\widetilde{G_u}$ will have Lie superalgebra $\g_u\times\C^{0|r}$, where $r$ is the rank of $u$, so we have such an odd supergroup present.  Write $\mathbb{O}\sub\widetilde{G_u}$ for the purely odd supergroup whose Lie superalgebra is given by $\C^{0|r}$.  

\begin{conj}\label{conj_intro_kac}
	If $K$ is a Kac-module for $\g$, then $\mathbb{O}$ acts freely on $DS_uK$.
\end{conj}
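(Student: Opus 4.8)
The plan is to reformulate the conjecture as a freeness statement and then establish it through Dade's Lemma together with the iteration properties of $DS$. Since $\C^{0|r}$ is abelian and purely odd, its enveloping superalgebra is the Grassmann algebra $E:=U(\C^{0|r})\cong\Lambda(\theta_1,\dots,\theta_r)$, and an action of $\mathbb{O}$ on $K_u:=DS_uK$ is nothing but an $E$-module structure; the assertion that $\mathbb{O}$ acts freely is exactly that $K_u$ is a free $E$-module. As $\g$ is finite dimensional and $K$ is a Kac module, $K_u$ is finite dimensional, so this is a question about finite-dimensional modules over a Grassmann algebra over $\C$. Recall that $\g=\g_{-1}\oplus\g_{\ol{0}}\oplus\g_1$ is type I and that, by the PBW theorem, the Kac module is free over $\Lambda(\g_{-1})=U(\g_{-1})$, with $K\cong\Lambda(\g_{-1})\otimes L_0(\lambda)$. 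Writing $u=u_-+u_+$ along the $\Z$-grading, $u_-$ acts on this model as wedging by the vector $u_-$ tensored with $\id_{L_0(\lambda)}$. An appropriate filtration of $K$ by the degree in $\Lambda(\g_{-1})$ has the raising operator $u_-\wedge(-)$ as leading differential; when $u_-\neq 0$ this is Koszul-acyclic, so $DS_uK=0$ and the conjecture is trivial. Hence we may assume $u=u_+\in\g_1$.

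With this reduction, I would invoke Dade's Lemma for exterior algebras: a finite-dimensional $E$-module $V$ is free if and only if for every nonzero $\theta=\sum_i a_i\theta_i$ the cohomology $H(V,\theta):=\ker\theta/\im\theta$ vanishes (equivalently, $V$ is free over each one-variable Grassmann subalgebra $\Lambda(\theta)$). Thus it suffices to prove that $H(K_u,\theta)=0$ for every nonzero $\theta\in\C^{0|r}$, regarded as an odd operator on $K_u$. I emphasise that this is strictly stronger than the known vanishing $ds_u[K]=0$ in the reduced Grothendieck ring of \cite{ds_bigpaper}: a short computation shows that the superdimension of $H(K_u,\theta)$ equals that of $K_u$ and so vanishes in each $\g_u$-isotypic block by that result, but freeness demands that the homology itself, not merely its superdimension, be zero.

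To compute $H(K_u,\theta)$ I would use the iteration property of the Duflo--Serganova functor. Each generator of $\C^{0|r}$, and hence each $\theta$, is by construction represented by an odd element of $\g$ which anticommutes with $u$ and squares to zero; for such $\theta$ the general identity $DS_\theta\circ DS_u\cong DS_{u+\theta}$ (see \cite{ds_bigpaper}) yields $H(K_u,\theta)\cong DS_{u+\theta}K$. The decisive input is that $\theta$ has nonzero component $\theta_-$ in $\g_{-1}$, so that the augmented element $u+\theta=\theta_-+(u_++\theta_+)$ has nonzero creation part. Running exactly the filtration argument of the first paragraph, now with leading differential $\theta_-\wedge(-)$, makes the $E_1$-page vanish and gives $DS_{u+\theta}K=0$; hence $H(K_u,\theta)=0$ for every nonzero $\theta$, and Dade's Lemma finishes the proof.

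The main obstacle is the assertion that every nonzero $\theta\in\C^{0|r}$ has nonzero creation component, i.e. that the odd symmetries produced by \cref{thm_intro_lie_super} are realised on $K_u$ by elements of $\g_{-1}$ (acting purely, as wedging $\otimes\,\id_{L_0(\lambda)}$) whose projection to $\g_{-1}$ is injective and of rank $r$. This is essential and not automatic: were some nonzero symmetry of pure annihilation type $\theta\in\g_1$, then $u+\theta$ would again lie in $\g_1$, the clean Koszul vanishing would fail, and $DS_{u+\theta}K$ could be nonzero. Establishing the creation-type description therefore requires unwinding the construction of $\g(u,\k)$ (or of the supergroup $\widetilde{G_u}$) to exhibit the residual $\g_{-1}$-action on cohomology, and checking that its rank matches the rank $r$ of $u$. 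The Heidersdorf--Weissauer computation \cite{HW}, in which a maximally atypical Kac module for $GL(n|n)$ becomes the regular representation $E=\Lambda^n$ under $DS_u$, is precisely the rank-one-free instance of this mechanism and should serve both as the base case and as the guide for identifying these creation operators in general.
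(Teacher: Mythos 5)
First, a point of comparison: the paper does not prove this statement. It is posed as an open conjecture, and the only case the paper establishes is \cref{thm_kac_mod}, which treats a particular maximal-rank $u$ in the compatibly $\Z$-graded setting by explicitly computing $DS_uK(L_0)\cong R\otimes L_0^{\k}$ as a free module over $R=(\UU\g_{-1})^{\k}\cong\UU(\operatorname{Lie}\mathbb{O})$. So your proposal is an attempt at an open problem, and must be judged on its own merits. Your reformulation (freeness of the $\mathbb{O}$-action as freeness over the Grassmann algebra $E=\UU(\C^{0|r})$), your reduction to $u\in\g_1$ via the Koszul/perturbation argument, and the appeal to a super Dade lemma are all sensible. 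But the two pillars the proof actually rests on both fail. The ``creation-type realization'' that you flag as the main obstacle is not a loose end to be tightened; it is false for $r\geq 2$, and the paper itself shows why. The linear generators you have in mind are the root vectors $v_{\alpha_i}$ of \cref{thm_intro_lie_super}, but by \cref{W action} and \cref{non_faithful_rmk} the action of that copy of $\C^{0|r}$ on $DS_u$ factors through the $W$-coinvariants, which for maximal-rank $u$ in $\g\l(n|n)$ is one-dimensional; that copy cannot act freely, or even faithfully, once $r\geq 2$, and it is not the $\mathbb{O}$ of the conjecture. The conjecture's $\mathbb{O}$ lies in $\widetilde{G_u}$, and by Section 6 its enveloping algebra is identified with $R=(\UU\g_{-1})^{\k}\cong(\bigwedge\k)^{\k}\cong H^*_{dR}(GL(r))$, whose primitive generators have degrees $1,3,\dots,2r-1$ in $\bigwedge\g_{-1}$. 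Only the degree-one primitive is an element of $\g_{-1}$; a generic $\theta\in\operatorname{Lie}\mathbb{O}$ acts through a higher-degree element of $\UU\g_{-1}$, so ``$u+\theta$'' is not an element of $\g_{\ol{1}}$ and $DS_{u+\theta}$ is simply undefined. Since Dade's lemma requires vanishing of $H(K_u,\theta)$ for \emph{every} nonzero $\theta$ in the span, the strategy collapses for all but a one-parameter family of $\theta$'s.

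Independently of this, the iteration identity $DS_\theta\circ DS_u\cong DS_{u+\theta}$ that you cite as a general property from \cite{ds_bigpaper} is not a theorem there and is false, in exactly the direction your argument needs. Counterexample: let $x,y$ span $\C^{0|2}$ and let $M=\langle c,e,f,g\rangle$ with $c,e$ even, $f,g$ odd, and $xe=yc=f$, $ye=g$, all other actions of $x,y$ on basis vectors zero. Then $x,y$ are square-zero and supercommute, $DS_yDS_xM\cong\C^{1|1}\neq 0$, yet $DS_{x+y}M=0$. In general $DS_{u+\theta}M$ is, when the two can be compared at all, only a subquotient of $DS_\theta DS_uM$ (an $E_\infty$-page against an $E_2$-page), so even a successful Koszul argument giving $DS_{u+\theta}K=0$ does not imply the vanishing $H(K_u,\theta)=0$ demanded by Dade's lemma; the implication you need runs the other way. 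In the one case your mechanism does cover --- $\theta$ equal to the degree-one primitive, realized by $\sum_i v_{\alpha_i}\in\g_{-1}$ --- the desired vanishing is indeed true, but it follows from \cref{thm_kac_mod} (freeness over $R$ restricts to freeness over $\Lambda\langle\theta\rangle$), not from the iteration step. A detection-style proof of the conjecture would therefore need a genuinely new way to compute $H(K_u,\theta)$ for the higher primitives, for which no DS-functor realization currently exists.
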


\cref{conj_intro_kac} would impose signficant structure on the $\g_u$-module structure of $DS_uK$, and thus would help in both the computations of its composition factors as well as its finer structure, e.g. Loewy layers.  Note that \cref{thm_kac_mod} is a special case of \cref{conj_intro_kac}.

\subsection{Connections to supergeometry}  The second connection to our work is inspired by the recent work \cite{localization}.  There, a localization theorem is proven that computes, under some conditions, $DS_u\C[X]$, where $X$ is a smooth affine algebraic supervariety, $\C[X]$ is its superalgebra of functions, and $u$ is a vector field on $X$ such that $[u,u]$ acts semisimply on $\C[X]$.  A special case is given in \cref{sec_intro_supergroup} when $X=G$ is an algebraic supergroup, and $u$ is an adjoint vector field on $G$.

Now suppose that $G$ is a supergroup and $X$ is a smooth, affine $G$-supervariety; for instance, $X$ may be a homogeneous $G$-space, such as a supersymmetric space.  Then $\widetilde{X_u}:=\Spec\C[X]_u$ will naturally have the structure of a $\widetilde{G_u}$-supervariety.  Further, a $G$-invariant differential operator $D$ on $X$ will naturally descend to a $\widetilde{G_u}$-invariant differential operator on $\widetilde{X_u}$, giving rise to a morphism of algebras 
\[
\DD^{G}(X)\to\DD^{\widetilde{G_u}}(\widetilde{X_u}),
\]
where for an $H$-supervariety $Y$ we write $\DD^{H}(Y)$ for the algebra of $H$-invariant differential operators on $Y$.   Such algebras of invariant differential operators are very important in the theory of supersymmetric spaces, with one special case being that of $\DD^{G\times G}(G)$ which gives $Z(\UU\g)$, the center of the enveloping superalgebra of $\g=\operatorname{Lie}G$. In this case, it was observed already in \cite{duflo2005associated} that there is a natural map $Z(\UU\g)\to Z(\UU\g_{u})$, and its image is well-understood and important in the study of blocks of the category of $\g$-modules.  No analogous maps have been known for general supersymmetric spaces; however in \cite{localization}, it is found that for many cases when $X$ is a supersymmetric space, $\widetilde{X_u}$ will again be a supersymmetric space for $\widetilde{G_u}$, and thus we obtain such a natural homomorphism $\DD^{G}(X)\to\DD^{\widetilde{G_u}}(\widetilde{X_u})$.  Relating this homomorphism to the Harish-Chandra maps in each case would be of great interest.

Further, in many cases we find that our additional odd generators of $\widetilde{G_u}$ act nontrivially on $\widetilde{X_u}$.  One example is when $X=GL(m|2n)/OSp(m|2n)$, with $G=GL(m|2n)$.  Here it is shown in \cite{localization} that if $u\in\o\s\p(m|2n)_{\ol{1}}$ is of rank 1, we have \linebreak $\widetilde{G_u}=GL(m-2|2(n-1))\times\mathbb{G}_{a}^{0|2}$, and 
\[
\widetilde{X_u}=\left(GL(m-2|2(n-1))/OSP(m-2|2(n-1))\right)\times\left(\mathbb{G}_{a}^{0|2}/\mathbb{G}_a^{0|1}\right).
\]
Here $\mathbb{G}_a^{0|r}$ denotes the purely odd, connected supergroup of rank $r$, and $\mathbb{G}_{a}^{0|1}\sub\mathbb{G}_a^{0|2}$ is embedded diagonally.  It would be of interest to understand either geometrically or representation theoretically what is the origin of the extra odd infinitesimal action in this case.

\subsection{Outline of paper} Section 2 will begin with preliminaries, and in particular we give a precise definition of symmetries of the Duflo-Serganova functor.  We will not use this definition in later sections.  We give an example for the general linear supergroup $GL(1|1)$.

We will introduce, separately, three different approaches to constructing symmetries of the Duflo-Serganova functor.  The first construction gives $\g(u,\k)$ as described in \cref{section_intro_constr_lie_superalgeb}, and Section 3 is devoted to the study of these Lie superalgebra.  In Section 4 we introduce the construction given in \cref{sec_intro_supergroup}, which is geometric and gives $\widetilde{G_u}$; we make a few general statements, but most of the section is devoted to certain important computations of this supergroup.  Finally, the third approach is discussed in Section 5; namely we produce an associative superalgebra which is a subquotient of $\UU\g$ and acts on $DS_u$.  As of yet we do not understand how to compute this superalgebra in most cases, but we introduce it because it generalizes, in some sense, the construction given in \cite{HW}.  We explain this connection, and further generalize it to the case of $\p(n)$; in the process we compute $DS_u$ on thin and thick Kac-modules when $u$ is a certain maximal rank element.

Finally, in section 6, we will connect the second and third approaches in the special case when the ideas of Heidersdorf and Weissauer apply, that is when $\g$ has a compatible $\Z$-grading $\g=\g_{-1}\oplus\g_0\oplus\g_1$, including the cases of $\g\l(n|n)$ and $\p(n)$.  Here, if $u$ is maximal rank and satisfies certain other properties, we may explicitly show that in some sense the third type of construction gives the enveloping superalgebra of the Lie supergroup given in the second, more geometric construction.

\subsection{Acknowledgments}  The author thanks Inna Entova-Aizenbud, Maria Gorelik, Vladimir Hinich, and Vera Serganova for many helpful discussions.  The author further thanks an anonymous referee for several helpful suggestions.  This research was partially supported by ISF grant 711/18 and NSF-BSF grant 2019694.

\section{Preliminaries}

\subsection{Notation} We work throughout over $\C$.  For a super vector space $V$ we write $V=V_{\ol{0}}\oplus V_{\ol{1}}$ for its parity decomposition.  Write $\sVec$ for the category of super vector spaces with even endomorphisms.

If $V=V_{\ol{1}}$ is a purely odd super vector space, we will write $SV_{\ol{1}}$ for the supersymmetric algebra on $V$, meaning that as a vector space it is really the exterior algebra on $V_{\ol{1}}$.

Let $\g$ be a Lie superalgebra, and let $\CC$ be a full subcategory of the category of $\g$-modules which contains the adjoint representation,  some faithful $\g$-module, and is closed under tensor product.  Let $c\in\g_{\ol{0}}$ be such that $c$ acts semisimply on every module in $\CC$; for instance the element $c=0$ always satisfies this condition, and this is the prototypical example.  For a module $M$, we write $M^c$ for the $c$-invariants on $M$; by our assumption, this will define an exact functor from $\CC$ to $\sVec$.  

Now for $u\in\g_{\ol{1}}$ such that $2u^2:=[u,u]=c$, $u$ defines a square-zero endomorphism on $M^c$ for any $M$ in $\CC$, and we write $M_u$ for its cohomology.  
\begin{definition}
	Define the Duflo-Serganova functor $DS_u:\CC\to\sVec$ by $DS_uM:=M_u$.
\end{definition}  

We note the following properties of $DS_u$, which were originally proven in Sec. 2 of \cite{ds_bigpaper}:
\begin{enumerate}
	\item $DS_u$ is a rigid tensor functor, i.e. we have canonical isomorphisms $(M\otimes N)_u\cong M_u\otimes N_u$ and $DS_uM^*\cong (DS_uM)^*$.
	\item $\C_u=\C$, and $\g_u$ is a Lie superalgebra such that $M_u$ naturally has the structure of $\g_u$-module.
	\item $DS_u$ is middle exact; in fact it takes a short exact sequence $0\to M'\to M\to M''\to 0$ to a long exact sequence
	\[
	\cdots\to \Pi M_u''\to M_u'\to M_u\to M_u''\to\Pi M_u'\to \cdots
	\]

\end{enumerate}

\begin{remark}
	Note that often $DS_u$ is thought of as a functor $\g$--$\mod\to\g_u$--$\mod$; however we choose instead to view it as simply a functor to $\sVec$ in order to simplify the study of its symmetries, as a functor.
\end{remark}

\subsection{Symmetries}  For every supercommutative $k$-algebra $R$ we we have functor $DS_u^R:\CC\otimes_k R\to R-\Mod$ defined in the natural way.  We write $\ul{\Aut}^{\otimes}(DS_u^R)$ for the group of automorphisms of $DS_u^R$, which consists of all $R$-linear invertible natural transformations $\alpha$ of $DS_u^R$ such that:
\begin{enumerate}
	\item For every morphism $f:M\to N$ in $\CC$ the following diagram is commutative:
	\[
	\xymatrix{
M_u\otimes_kR\ar[d]^{f_u\otimes \id_R} \ar[r]^{\alpha_M}	& M_u\otimes_K R \ar[d]^{f_u\otimes\id_R} \\ N_u\otimes_kR \ar[r]^{\alpha_N} & N_u\otimes_K R 
} 
	\]
	\item If $M,N$ are in $\CC$, then the following diagram commutes:
	\[
	\xymatrix{
 	(M\otimes N)_u\otimes_k R \ar[d] \ar[rr]^{\alpha_{M\otimes N}}	&& (M\otimes N)_u\otimes_k R \ar[d]\\
 	(M_u\otimes_kR)\otimes_R(N_u\otimes_kR) \ar[rr]^{\alpha_M\otimes\alpha_N} && (M_u\otimes_kR)\otimes_R(N_u\otimes_kR).
}	\]
	\item $\alpha_{\C}=\id_R$.
\end{enumerate} 
In this way may define a functor $\ul{\Aut}^{\otimes}(DS_u)$ from the category of supercommutative superalgebras to groups.  We say that a supergroup $L$ acts on $DS_u$ if we have a natural transformation of functors $L\to\ul{\Aut}^{\otimes}(DS_u)$.

The following lemma is straightforward from the definition.
\begin{lemma}
	Suppose that $L$ is a Lie supergroup such that for every module $M$ in $\CC$ we have an action of $L$ on $M_u$ which is natural in $M$.  Then $L$ acts on $DS_u$.
\end{lemma}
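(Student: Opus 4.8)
The plan is to unwind the definition: to say that $L$ acts on $DS_u$ is to give a natural transformation of functors $L\to\ul{\Aut}^{\otimes}(DS_u)$ on supercommutative $k$-algebras, so for each such $R$ I must produce a group homomorphism $L(R)\to\ul{\Aut}^{\otimes}(DS_u^R)$, natural in $R$. Writing the hypothesized action as $R$-linear automorphisms $\rho_M^R(g)$ of $M_u\otimes_kR$ for $g\in L(R)$, natural in $M$, I would define the candidate natural transformation $\alpha(g)$ of $DS_u^R$ by $\alpha(g)_M:=\rho_M^R(g)$. Everything then reduces to checking that $\alpha(g)$ lies in $\ul{\Aut}^{\otimes}(DS_u^R)$, i.e.\ satisfies the three listed conditions, that $g\mapsto\alpha(g)$ is a homomorphism, and that the assignment is natural in $R$.

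Several of these are immediate. Each $\alpha(g)_M$ is $R$-linear and invertible with inverse $\alpha(g^{-1})_M=\rho_M^R(g^{-1})$, and $g\mapsto\alpha(g)$ is a group homomorphism because $\rho_M^R$ is; naturality in $R$ is exactly the statement that the $\rho_M^R$ come from an action of the supergroup $L$, i.e.\ assemble into a morphism of group-valued functors in $R$. Condition (1), commutativity of $\alpha(g)$ with every $f_u\otimes\id_R$ for a morphism $f\colon M\to N$ of $\CC$, is precisely the hypothesis that the action is natural in $M$ (that each $f_u$ is $L$-equivariant). Condition (3), $\alpha(g)_{\C}=\id_R$, follows from $\C_u=\C$ together with the fact that $L$ acts trivially on the monoidal unit.

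The substantive point is condition (2), compatibility with the tensor structure. Here I cannot use naturality in $M$ directly, since that controls only single arrows of $\CC$, whereas (2) concerns the external tensor product of two objects: I must show that under the canonical isomorphism $(M\otimes N)_u\cong M_u\otimes N_u$ of property (1) of $DS_u$, the automorphism $\alpha(g)_{M\otimes N}$ corresponds to $\alpha(g)_M\otimes\alpha(g)_N$. The way I would prove this is to check that these canonical structure isomorphisms are themselves $L$-equivariant, so that transporting $\rho_{M\otimes N}^R(g)$ across them recovers the tensor-product action $\rho_M^R(g)\otimes\rho_N^R(g)$. I expect this to be the only step that is more than bookkeeping, and it is the place where one genuinely uses that the given $L$-action is a symmetry of the \emph{tensor} functor $DS_u$ rather than merely of the underlying functor to $\sVec$; in the concrete constructions of later sections the action arises from (co)derivations acting by automorphisms of the underlying Hopf/tensor data, for which this equivariance is automatic.
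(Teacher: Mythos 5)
Your bookkeeping for condition (1), the homomorphism property, and naturality in $R$ is exactly the "straightforward check" the paper has in mind (the paper offers no argument at all, declaring the lemma straightforward from the definition). However, at the two places you yourself flag as substantive --- conditions (2) and (3) --- your argument is circular rather than complete, and the circularity cannot be removed, because it reflects a gap in the statement of the lemma itself. For (2), "check that the canonical isomorphisms $(M\otimes N)_u\cong M_u\otimes N_u$ are $L$-equivariant" is not a proof strategy: that equivariance \emph{is} condition (2), and it does not follow from the stated hypothesis. Likewise for (3), "the fact that $L$ acts trivially on the monoidal unit" is not a fact available to you; an action of $L$ on $\C_u=\C$ is just a character of $L$, a priori nontrivial. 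Concretely, the lemma as literally stated is false: take $L=\G_m$ and let $t\in\G_m(R)=R^{\times}$ act on every $M_u\otimes_kR$ by multiplication by $t$ (i.e.\ give each $M_u$ weight $1$). This is an algebraic action, natural in $M$ because every $f_u\otimes\id_R$ is $R$-linear; but $\alpha_{M\otimes N}$ is multiplication by $t$ while $\alpha_M\otimes\alpha_N$ is multiplication by $t^2$, and $\alpha_{\C}=t\cdot\id_R\neq\id_R$, so conditions (2) and (3) both fail.

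The fix is the one you intuited but did not make explicit: the hypothesis must be strengthened to say that the actions on $M_u$ are compatible with the monoidal data of $DS_u$, i.e.\ that the canonical maps $(M\otimes N)_u\cong M_u\otimes N_u$ and $\C_u=\C$ are isomorphisms of $L$-modules. (Note that once (2) holds, (3) is automatic: taking $M=N=\C$ gives $\alpha_{\C}=\alpha_{\C}\otimes\alpha_{\C}$, and an invertible idempotent scalar is $1$.) With that hypothesis your argument goes through verbatim. This stronger hypothesis is also what actually holds in the paper's applications: the action of $\g(u,\k)$ comes from elements acting through the coproduct of $\UU\g$ (Leibniz rule), and the action of $\widetilde{G_u}$ comes from the comodule structure over the Hopf superalgebra $\C[G]_u$, both of which $DS_u$ preserves as a tensor functor. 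So your diagnosis of where the real content lies is correct; what is missing is the recognition that this content must be placed in the hypothesis (or the lemma restated), since it cannot be derived from naturality in $M$ alone.
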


Similarly we define $\ul{\mathfrak{aut}}^{\otimes}(DS_u^R)$ as the subspace of $R$-linear natural endomorphisms $a$ of $DS_u^R$ such that
\begin{enumerate}
	\item For every morphism $f:M\to N$ in $\CC$ the following diagram is commutative:
	\[
	\xymatrix{
		M_u\otimes_kR\ar[d]^{f_u\otimes 1} \ar[r]^{a_M}	& M_u\otimes_K R \ar[d]^{f_u\otimes 1} \\ N_u\otimes_kR \ar[r]^{a_N} & N_u\otimes_K R 
	} 
	\]
	\item If $M,N$ are in $\CC$, then the following diagram commutes:
	\[
	\xymatrix{
		(M\otimes N)_u\otimes_k R \ar[d] \ar[rrr]^{a_{M\otimes N}} &&& (M\otimes N)_u\otimes_k R \ar[d]\\
		(M_u\otimes_kR)\otimes_R(N_u\otimes_kR) \ar[rrr]^{a_M\otimes\id+\id\otimes a_N} &&& (M_u\otimes_kR)\otimes_R(N_u\otimes_kR).
	}	\]
	\item $a_{\C}=0$.
\end{enumerate} 

Thus we have defined a functor $\ul{\mathfrak{aut}}^{\otimes}(DS_u)$ from the category of supercommutative superalgebras to Lie algebras.  We say that a Lie superalgebra $\l$ acts on $DS_u$ if we have a natural transformation of functors $\l\to\ul{\mathfrak{aut}}^{\otimes}(DS_u)$.

Again the following lemma is a straightforward check.
\begin{lemma}
	Suppose that $\l$ is a Lie superalgebra such that for every module $M$ in $\CC$ we have an action of $\l$ on $M_u$ which is natural in $M$.  Then $\l$ acts on $DS_u$.
\end{lemma}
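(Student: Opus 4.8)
The plan is to reduce the statement to a sign-bookkeeping check via the functor-of-points formalism, which is precisely the device that converts the \emph{signed} Leibniz rule obeyed by an action of a Lie superalgebra into the \emph{unsigned} Leibniz rule built into condition (2) of the definition of $\ul{\mathfrak{aut}}^{\otimes}(DS_u^R)$. Unwinding the hypothesis, an action of $\l$ on the spaces $M_u$ natural in $M$ amounts to a homomorphism of Lie superalgebras $\rho$ from $\l$ into the natural endomorphisms of $DS_u$ (over $\C$): for each $X\in\l$ and each $M$ in $\CC$ we are given $\rho(X)_M\in\End(M_u)$, these commute with every $f_u$ (naturality in $M$), the assignment $X\mapsto\rho(X)$ respects the super-bracket, and each $\rho(X)$ is a super-derivation for the tensor structure that vanishes on the unit. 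From this I must produce, for every supercommutative superalgebra $R$, a homomorphism of ordinary Lie algebras $\Phi_R\colon(\l\otimes_\C R)_{\ol{0}}\to\ul{\mathfrak{aut}}^{\otimes}(DS_u^R)$, natural in $R$.

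First I would define $\Phi_R$ on a homogeneous generator $X\otimes r$ with $|X|=|r|$ by letting $\Phi_R(X\otimes r)$ act on $M_u\otimes_\C R$ through $m\otimes s\mapsto(-1)^{|r||m|}\rho(X)_M(m)\otimes rs$, extended $R$-linearly and additively; the sign is the usual Koszul sign incurred in moving the scalar $r$ past $m$. Right $R$-linearity is then immediate, and condition (1) is immediate from naturality of $\rho$ in $M$, since $\Phi_R(X\otimes r)$ is $\rho(X)$ postcomposed with multiplication by $r$ and a sign, while $f_u\otimes\id_R$ touches neither factor. Condition (3) follows since $\rho(X)_\C=0$; I note this is in any case forced by the derivation property applied to $\C\otimes\C\cong\C$, which gives $\rho(X)_\C=2\rho(X)_\C$.

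The one genuinely non-formal step is condition (2), and this is where the functor of points earns its keep. For odd $X$ and odd $r$ the operator $\rho(X)$ satisfies the signed rule $\rho(X)_{M\otimes N}(m\otimes n)=\rho(X)_M(m)\otimes n+(-1)^{|m|}m\otimes\rho(X)_N(n)$, whereas condition (2) demands the unsigned identity $\Phi_R(X\otimes r)_{M\otimes N}=\Phi_R(X\otimes r)_M\otimes\id+\id\otimes\Phi_R(X\otimes r)_N$. I would verify agreement by tracking a homogeneous element $m\otimes n\otimes s$ through both sides under the canonical identification $(M_u\otimes_\C R)\otimes_R(N_u\otimes_\C R)\cong M_u\otimes_\C N_u\otimes_\C R$: the extra Koszul signs produced by commuting the odd scalar $r$ past the module elements in that identification exactly cancel the $(-1)^{|m|}$ of the signed Leibniz rule, so both terms match. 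This cancellation — an odd derivation becoming an honest even derivation once its label is tensored with an odd scalar — is the crux of the lemma, and it explains why $\ul{\mathfrak{aut}}^{\otimes}$ is a functor to Lie algebras (seeing only even symmetries at $R=\C$) rather than to Lie superalgebras.

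It remains to check that $\Phi_R$ is a homomorphism of Lie algebras and natural in $R$. The identity $[\Phi_R(X\otimes r),\Phi_R(Y\otimes t)]=\Phi_R([X\otimes r,Y\otimes t])$ reduces, after the same sign accounting, to the facts that $\rho$ preserves the super-bracket and that the bracket on $(\l\otimes_\C R)_{\ol{0}}$ is $[X\otimes r,Y\otimes t]=(-1)^{|r||Y|}[X,Y]\otimes rt$; naturality in $R$ is immediate, since a morphism $R\to R'$ acts only on the scalar factor. I expect the sole obstacle to be maintaining the sign discipline in condition (2); everything else is formal, which is consistent with the lemma being a straightforward check.
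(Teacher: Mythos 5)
Your proof is correct and takes essentially the same route as the paper, which simply declares this lemma "a straightforward check": your argument is exactly that check carried out, defining $\Phi_R$ on $(\l\otimes_\C R)_{\ol{0}}$ by tensoring $\rho$ with scalars and verifying $R$-linearity, conditions (1)--(3), the Lie algebra homomorphism property, and naturality in $R$, with the Koszul sign $(-1)^{|r||m|}$ in your formula for $\Phi_R(X\otimes r)$ precisely cancelling the super-Leibniz sign in condition (2). You also correctly read the hypothesis as including compatibility of the $\l$-action with the tensor structure (each $\rho(X)$ acting as a super-derivation and vanishing on the unit object), which is the intended meaning, since condition (2) cannot follow from naturality in $M$ alone.
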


\subsection{An example}  

\begin{example}\label{gl(1|1) example}
	Let $\g=\g\l(1|1)$ and let $\CC=\Rep GL(1|1)$, i.e. the finite-dimensional representations of the general linear supergroup $GL(1|1)$.  Present $\g\l(1|1)$ as the matrices
	\[
	\begin{bmatrix}
	I+h & E\\ F & I-h
	\end{bmatrix}
	\]
	We compute $DS_E$ on $\CC$.  By the known classification of indecomposable modules in $\CC$, the only indecomposables on which $DS_E$ acts nontrivially are the parity shifts of the Berezinian modules $(\Pi)\Ber^n$, for $n\in\Z$, along with the parity shifts of the Kac modules $(\Pi)K^+(0)\otimes\Ber^n$; on these modules $E$ acts trivially, while $F$ acts nontrivially on the Kac-modules.  Further $h$ normalizes $E$; thus we obtain a natural action of the Lie superalgebra $\langle h,F\rangle$ on $DS_E$.  A study of the morphisms between these modules shows that $\langle h,F\rangle=\ul{\mathfrak{aut}}^{\otimes}(DS_E)$ in this case.
\end{example}

\section{First approach: natural Lie superalgebras which act by symmetries}

Let $\k\sub[u,\g^c]$ be a subalgebra which acts semisimply on all objects of $\CC$. In particular we have $[u,\k]=0$. 
\begin{lemma}\label{invariance_ds_commute}
	If $V$ is in $\CC$, then the inclusion $V^{\k}\to V$ induces a natural isomorphism $(V^{\k})_u\cong V_u$.
\end{lemma}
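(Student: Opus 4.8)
The plan is to show that under $DS_u$ the whole of $V$ collapses onto its $\k$-invariant part, i.e.\ that the nontrivial $\k$-isotypic components of $V$ are acyclic for the differential $u$. First I would record the basic fact that $\k\sub\g^c$: for $y\in\g^c$ one has $[c,[u,y]]=[[c,u],y]+[u,[c,y]]=0$, since $[c,u]=0$ and $[c,y]=0$, so $[u,\g^c]\sub\g^c$. Hence $c$ commutes with $\k$ and preserves $V^{\k}$, while $[u,\k]=0$ shows $u$ preserves $V^{\k}$; thus $u$ is square-zero on $(V^{\k})^c$ and $(V^{\k})_u$ is defined. Because $\k$ acts semisimply on $V$ and $u,c$ commute with $\k$, there is a $\k$-stable decomposition $V=V^{\k}\oplus W$ with $W$ the sum of the nontrivial $\k$-isotypic components, and both $u$ and $c$ preserve each summand. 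Passing to $c$-invariants and then to $u$-cohomology gives $V_u=(V^{\k})_u\oplus W_u$, with the inclusion $V^{\k}\hookrightarrow V$ inducing exactly the inclusion of the first summand. So the lemma reduces to proving $W_u=0$.

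The next step is a homotopy argument showing that $\k$ acts by zero on all of $V_u$. Writing an arbitrary $x\in\k$ as $x=[u,y]$ with $y\in\g^c$, the operator $y$ preserves $V^c$ (as $[c,y]=0$), and since the action is a Lie superalgebra homomorphism, $x$ acts on $V^c$ as the super-commutator of the operators $u$ and $y$. As $u$ is the differential on $V^c$ (with $u^2=\tfrac12 c=0$ there), this is precisely the chain-homotopy relation: on a cocycle $z$ one computes $x\cdot z=u(y\cdot z)$, a coboundary. Hence $x$ induces the zero map on $V_u$, so $\k$ acts by $0$ on $V_u$, and in particular $W_u$ is a trivial $\k$-module, i.e.\ $(W_u)^{\k}=W_u$.

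Finally I would play this against semisimplicity. Since $W$ is semisimple over $\k$ with no trivial isotypic component and $\k$ preserves $W^c$, the submodule $W^c$ is again semisimple with no trivial component; as $W_u=\ker(u)/\im(u)$ is a $\k$-subquotient of $W^c$ (both $\ker u$ and $\im u$ are $\k$-stable because $[u,\k]=0$), it too is semisimple with no trivial component, so $(W_u)^{\k}=0$. Comparing with the previous paragraph, where $W_u$ was shown to be a trivial $\k$-module, forces $W_u=0$, proving the claim. Naturality in $V$ is then immediate, since each ingredient---the invariants $V^{\k}$, the canonical isotypic complement $W$, the passage to $c$-invariants, and $u$-cohomology---is functorial.

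The delicate point, and the reason the proof is not purely formal, is that the element $y$ witnessing $x=[u,y]$ need not preserve the decomposition $V=V^{\k}\oplus W$, so one cannot build a contracting homotopy on the summand $W$ directly. The remedy is to run the homotopy on all of $V$, where $\g^c$ (hence $y$) genuinely acts, deducing only that $\k$ annihilates $V_u$, and then to extract $W_u=0$ by combining this triviality with the semisimplicity statement that a nontrivial isotypic subquotient can carry no $\k$-invariants.
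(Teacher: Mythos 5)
Your proof is correct and takes essentially the same route as the paper's: decompose $V=V^{\k}\oplus V'$ into the trivial and nontrivial $\k$-isotypic parts (preserved by $u$ since $[u,\k]=0$), observe that $(V')_u$ is a semisimple $\k$-module with no trivial constituents, and play this against the fact that $\k\sub[u,\g^c]$ acts null-homotopically, hence trivially, on $V_u$ to force $(V')_u=0$. The only difference is one of exposition: you spell out the homotopy argument and the point that the witness $y$ need not preserve the decomposition, both of which the paper leaves implicit.
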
  
\begin{proof}
	We may decompose $V$ as a $\k$-module as $V=V^{\k}\oplus V'$, and $u$ must preserve this decomposition since $[u,\k]=0$.  Further $\k$ acts on $(V')_u$, which will be a semisimple $\k$-module with no trivial constituents.  But $\k$ acts trivially on $V_u$ since $\k\sub[u,\g^c]$, so necessarily $(V')_u=0$.
\end{proof}
%

\begin{cor}
	We have a natural isomorphism of functors $DS_u\cong DS_u\circ (-)^{\k}$.
\end{cor}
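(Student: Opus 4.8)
The plan is to recognize this as a purely formal consequence of Lemma \ref{invariance_ds_commute}: a natural transformation all of whose components are isomorphisms is itself a natural isomorphism. The lemma already supplies the isomorphism at each object; the corollary only needs these to be assembled into a statement about functors, and the naturality comes for free.

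First I would note that $(-)^{\k}$ is a subfunctor of the identity on $\CC$. Indeed, for any morphism $f\colon V\to W$ in $\CC$, the map $f$ is $\g$-equivariant, hence $\k$-equivariant since $\k\sub\g$, so $f(V^{\k})\sub W^{\k}$. Thus the inclusions $\iota_V\colon V^{\k}\embedsto V$ fit into commutative squares
\[
\xymatrix{
V^{\k}\ar[r]^{\iota_V}\ar[d]_{f|_{V^{\k}}} & V\ar[d]^{f}\\
W^{\k}\ar[r]^{\iota_W} & W
}
\]
which is precisely the assertion that the family $\{\iota_V\}$ constitutes a natural transformation $\eta\colon(-)^{\k}\Rightarrow\id_{\CC}$.

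Next I would apply $DS_u$. Since $DS_u$ is a functor it sends the squares above to commutative squares, so composing $\eta$ with $DS_u$ yields a natural transformation $DS_u\eta\colon DS_u\circ(-)^{\k}\Rightarrow DS_u$ whose component at $V$ is exactly $(\iota_V)_u\colon(V^{\k})_u\to V_u$, the map induced by the inclusion. By Lemma \ref{invariance_ds_commute} each such component is an isomorphism, and a natural transformation whose every component is invertible is a natural isomorphism. Hence $DS_u\circ(-)^{\k}\cong DS_u$, as claimed.

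The only point requiring a word of care, and the place I would be most attentive, is that the isomorphism produced by Lemma \ref{invariance_ds_commute} is precisely the one induced by $\iota_V$ rather than some unrelated one; this is what guarantees that the components of $DS_u\eta$ are the invertible maps we need, and it is already built into the statement of the lemma. There is no genuine obstacle beyond this: once the inclusion is seen to be natural in $V$, functoriality of $DS_u$ does all the remaining work, and the corollary is a repackaging of the lemma.
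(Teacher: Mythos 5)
Your proof is correct and takes essentially the same approach as the paper, which offers no separate argument for this corollary and treats it as an immediate repackaging of Lemma \ref{invariance_ds_commute}, whose statement already asserts that the isomorphism induced by the inclusion $V^{\k}\to V$ is natural. The only cosmetic caveat is that $V^{\k}$ is generally only a $\g^{\k}$-module rather than an object of $\CC$, so the squares you draw should be viewed in a larger category (super vector spaces with compatible $c$- and $u$-actions) on which $u$-cohomology is functorial; with that reading, your argument is exactly the intended one.
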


\subsection{Construction of $\g(u,\k)$} We have a natural action of $\g^{\k}=\c(\k)$ on $V^{\k}$.  This descends to an action of $\c(\k)/Z(\k)$ on $V^{\k}$. Taking cohomology, we obtain an action of the Lie superalgebra 
\[
\g(u,\k):=(\c(\k)/Z(\k))_u
\]
on $(V^{\k})_u$.  Thus we obtain a new Lie superalgebra that acts on $DS_u$.

\begin{example}
	We return to Example \ref{gl(1|1) example}, and in particular retain the notation from that example.  Let $u\in\g_{\ol{1}}$ be an arbitrary non-zero element; then $u^2=c$ is semisimple and is a multiple of the central element $I$ of $\g$, thus $\g=\g^c$.  We observe that in any case, $\k:=\C\langle I\rangle\sub [u,\g]$, and $c(\k)/\k\cong\p\g\l(1|1)$.  Therefore $\g(u,\k)=\p\g\l(1|1)_u\cong\C^{0|1}$ naturally acts on the Duflo-Serganova functor in this case.   If $u^2=0$ in this case, then we in fact obtain a non-trivial action as we saw in Example \ref{gl(1|1) example}.
\end{example}

\subsection{Maximality of $\g(u,\ft)$} Choose a maximal toral subalgebra $\ft$ of $\k$, that is a maximal even subalgebra such that every element in it acts semisimply on all modules in $\CC$.  Then $\mathfrak{t}$ must be abelian, and in particular must contain $Z(\k)$.  We obtain an inclusion $\c(\k)\sub\c(\ft)$, and since $Z(\k)\sub Z(\ft)=\ft$, we obtain a map $\phi:\c(\k)/Z(\k)\to\c(\ft)/\ft$, which is injective since $\c(\k)\cap\ft=Z(\k)$.  Taking cohomology, we obtain a map of Lie superalgebras:
\[
\phi_u:\g(u,\k)\to\g(u,\ft).
\]
Further we have a commutative diagram
\[
\xymatrix{\g(u,\k)\ar[r]\ar[d]^{\phi_u} & \End((V^{\k})_u)\ar[d]\\ \g(u,\ft)\ar[r] & \End((V^{\ft})_u)}.
\]
\begin{lemma}
	$\phi_u$ is injective.
\end{lemma}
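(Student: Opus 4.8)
The plan is to produce an explicit $\k$-equivariant projection $p\colon\g\to\c(\k)$ and to use it to transport a coboundary relation in the complex $\c(\ft)/\ft$ back to one in $\c(\k)/Z(\k)$. Throughout, $\g(u,\k)=(\c(\k)/Z(\k))_u$ and $\g(u,\ft)=(\c(\ft)/\ft)_u$ are the cohomologies of the odd square-zero operator $\operatorname{ad}u$; note that $[u,\k]=0$ gives $u\in\c(\k)\cap\c(\ft)$, so these differentials are well defined, and $\operatorname{ad}u$ annihilates both $Z(\k)$ and $\ft$.

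First I would construct $p$. Since the adjoint representation lies in $\CC$ and $\k$ acts semisimply on every object of $\CC$, the Lie superalgebra $\g$ is a semisimple $\k$-module, so I take $p$ to be the projection onto the trivial isotypic component $\g^{\k}=\c(\k)$ along the sum $\g'$ of the nontrivial isotypic components. Two properties are essential. (i) Because $[u,\k]=0$, the operator $\operatorname{ad}u$ commutes with the $\k$-action, hence preserves the isotypic decomposition and commutes with $p$, i.e. $p\circ\operatorname{ad}u=\operatorname{ad}u\circ p$. (ii) The subalgebra $\k$ is itself a $\k$-submodule of $\g$ under the adjoint action, so $p$ restricts on $\k$ to the projection onto its trivial isotypic part $\k^{\k}=Z(\k)$; in particular, using $\ft\subseteq\k$, we get $p(\ft)\subseteq p(\k)=Z(\k)$.

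With $p$ in hand the argument is short. Suppose $[\bar x]\in\g(u,\k)$ lies in $\ker\phi_u$, represented by $x\in\c(\k)$ with $[u,x]\in Z(\k)$. Vanishing of $\phi_u[\bar x]$ in $(\c(\ft)/\ft)_u$ means that the image of $x$ is a coboundary there, i.e. there exist $y\in\c(\ft)$ and $s\in\ft$ with $x=[u,y]+s$. Applying $p$ and using $p(x)=x$ (as $x\in\c(\k)$), property (i), and property (ii), I obtain
\[
x=[u,p(y)]+p(s),\qquad p(y)\in\c(\k),\ \ p(s)\in Z(\k).
\]
Setting $z=p(y)$, this reads $x-[u,z]\in Z(\k)$ with $z\in\c(\k)$, so $\bar x=\operatorname{ad}u(\bar z)$ is a coboundary in $\c(\k)/Z(\k)$ and $[\bar x]=0$. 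Hence $\phi_u$ is injective.

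The main obstacle is entirely in the construction of $p$ and the verification of (i) and (ii); once these hold, the transport of the coboundary is a one-line computation. The subtle point is (ii): the invariant projection must send $\ft$ into $Z(\k)$ and not merely into $\c(\k)$, for this is what makes the correction term $p(s)$ disappear in the quotient by $Z(\k)$, and it is precisely where the hypotheses $\ft\subseteq\k$ and $Z(\k)\subseteq\ft$, together with semisimplicity of the $\k$-action, are used. I would also note that the same projection yields $\c(\ft)=\c(\k)\oplus(\c(\ft)\cap\g')$ as complexes under $\operatorname{ad}u$, which gives an alternative, more structural proof via the long exact sequences of the pairs $(\c(\k),Z(\k))$ and $(\c(\ft),\ft)$; but the direct computation above is cleaner to present.
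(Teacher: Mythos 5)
Your proof is correct and is essentially the paper's own argument: the paper decomposes $z=z_0+z'$ and $h=h_0+h'$ along the $\k$-isotypic decomposition of $\g^c$ into $\c(\k)^c$ plus its $\k$-complement, which is exactly an application of your projection $p$, and then compares components just as you do. The only bookkeeping difference is that the paper works throughout inside the $c$-invariants $\c(\k)^c$ and $\c(\ft)^c$, where $\operatorname{ad}u$ is genuinely square-zero; since $p$ commutes with $\operatorname{ad}c$ (as $[c,\k]=0$), your argument restricts there verbatim.
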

\begin{proof}
	Suppose that $y\in\c(\k)^c$ such that $[y,u]\in Z(\k)$, and there exists $z\in\c(\ft)^c$ such that $[u,z]=y+h$ for some $h\in\ft$.  Since $\k$ acts semisimply, we may decompose $z=z_0+z'$ and $h=h_0+h'$, where $z_0\in\c(\k)^c$, $h_0\in Z(\k)$, and $z',h'$ lie in the $\k$-complement in $\g^c$.  Then we obtain the formula
	\[
	y+h_0-[u,z_0]=[u,z']-h'.
	\]
	However the LHS lies in $\c(\k)^c$ while the RHS lies in the $\k$-complement to $\c(\k)^c$ in $\g^c$.  Thus both must be zero, and we find that
	\[
	y+h_0=[u,z_0].
	\]
	However this exactly means $y$ is 0 in $(\c(\k)/Z(\k))_u$.  Therefore $\phi_u$ is injective.
	
\end{proof}

Write $K\sub GL(\g)$ for the subgroup of $GL(\g)$ integrating $\k$.  Let $W$ denote the Weyl group obtained from the action of $N_K(\ft)$ on $\ft$.  Then $W$ naturally acts on $\g(u,\ft)$.  The following lemma is follows from that fact that the action of $W$ comes from $K$, which acts trivially on $DS_u$.

\begin{lemma}\label{W action}
	The image of $\phi_u:\g(u,\k)\to\g(u,\ft)$ lands in $\g(u,\ft)^W$.	Further, the action of $\g(u,\ft)$ on $DS_u$ factors through the coinvariants of $W$.
\end{lemma}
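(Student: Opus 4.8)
The plan is to prove both assertions of \cref{W action} by tracing the $K$-action carefully, since $K$ integrates $\k$ and acts trivially on $DS_u$ by \cref{invariance_ds_commute} (the map $V^{\k}\to V$ inducing the isomorphism $(V^{\k})_u\cong V_u$ is $K$-equivariant, and $K$ acts trivially on $V^{\k}$ because $\k$ does after passing to $\k$-invariants). First I would set up the commutative diagram relating the action of $\g(u,\k)$ and $\g(u,\ft)$ on $DS_u$, which is exactly the square already displayed just before the statement. The key structural fact is that $N_K(\ft)$ acts on $\c(\ft)$, hence on $\g(u,\ft)=(\c(\ft)/\ft)_u$, and this induced action is precisely the Weyl group action defining $W$; I would check that this $W$-action on $\g(u,\ft)$ is compatible with the $K$-action on the target $\End((V^{\ft})_u)$ via the representation.

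For the first assertion, that $\im\phi_u\sub\g(u,\ft)^W$, the idea is that elements in the image of $\phi_u$ come from $\c(\k)/Z(\k)$, and every element of $\c(\k)$ commutes with all of $\k$, in particular is fixed by $N_K(\ft)\sub K$ up to the inner action that is already trivial on $\c(\k)$. More precisely, I would argue that the composite $\c(\k)/Z(\k)\xto{\phi}\c(\ft)/\ft\to\g(u,\ft)$ lands in the $N_K(\ft)$-fixed points because $N_K(\ft)$ acts trivially on $\c(\k)$ (since $\c(\k)$ centralizes $\k$ and $K$ is generated by the integration of $\k$), so after taking cohomology the image is $W$-invariant. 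This is a clean consequence of the definitions once the $K$-equivariance is established.

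For the second assertion, that the $\g(u,\ft)$-action on $DS_u$ factors through the $W$-coinvariants, the point is that $W$ acts on $DS_u$ through the $K$-action, and $K$ acts trivially on $DS_u$; therefore any two elements of $\g(u,\ft)$ differing by the $W$-action induce the same operator on $M_u$ for every $M$. Concretely, for $w\in W$ represented by $k\in N_K(\ft)$ and $x\in\g(u,\ft)$, the operators induced by $x$ and $w\cdot x$ on $(V^{\ft})_u$ are conjugate by the action of $k$, which is the identity on $DS_u$; hence they agree, so the action descends to $\g(u,\ft)_W$.

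The main obstacle I anticipate is making the $K$-equivariance and the passage to cohomology fully rigorous: one must verify that the Weyl group action genuinely arises from the $K$-action at the level of the functor $DS_u$ (not merely on the Lie superalgebra $\c(\ft)/\ft$), and that taking $u$-cohomology of the $c$-invariants is compatible with all these group actions. In particular, since $\ft$ may not be exactly $Z(\k)$ and the quotients $\c(\k)/Z(\k)$, $\c(\ft)/\ft$ interact through $\phi$, I would need to be careful that the trivial $K$-action on $V^{\k}$ and the induced $N_K(\ft)$-action on $V^{\ft}$ are genuinely linked by the inclusion $V^{\k}\embedsto V^{\ft}$, so that the triviality of $K$ on $DS_u\cong DS_u\circ(-)^{\k}$ transfers to the statement about $\g(u,\ft)$ acting via $(-)^{\ft}$. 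Once this compatibility of the two invariance functors is in place, both claims follow formally.
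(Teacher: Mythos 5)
Your proposal is correct and takes essentially the same approach as the paper: the paper's entire justification is the remark that the $W$-action comes from $K$, which acts trivially on $DS_u$, and your expansion (triviality of the connected group $K$ on $\c(\k)$ for the invariance claim, and conjugation by $k\in N_K(\ft)$ acting trivially on $(V^{\ft})_u\cong V_u$ for the coinvariance claim) is precisely the intended argument. The integration subtleties you flag at the end are likewise left implicit in the paper, so they do not constitute a deviation from its proof.
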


The above lemma shows that the Lie superalgebra $\g(u,\ft)$ does not need to act faithfully.

\begin{lemma}\label{quotient_ds_is_larger}
	With the same assumptions on $\k$ as above, the natural map
	\[
	\c(\k)_u\to\g(u,\k)
	\]
	is injective.  In particular, since $\g_{u}\cong\c(\k)_u$, $\g_u$ is a subalgebra of $\g(u,\k)$.
\end{lemma}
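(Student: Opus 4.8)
The plan is to reduce the injectivity of $\c(\k)_u\to\g(u,\k)$ to the single claim that every central element of $\k$ is already a $u$-coboundary inside the centralizer, namely $Z(\k)\sub[u,\c(\k)^c]$. First I would set up the short exact sequence $0\to Z(\k)\to\c(\k)\to\c(\k)/Z(\k)\to 0$. Since $u$ and $c=[u,u]$ both commute with $\k$ (as $[u,\k]=0$), they preserve each term, and because $c$ acts semisimply, passing to $c$-invariants preserves exactness. Applying the long exact sequence of property (3) of $DS_u$, exactness at $\c(\k)_u$ yields $\ker(\c(\k)_u\to\g(u,\k))=\im(Z(\k)_u\to\c(\k)_u)$, so it suffices to show that the map $Z(\k)_u\to\c(\k)_u$ induced by inclusion vanishes.

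Next I would compute the source. Since $Z(\k)\sub\k$ and $[u,\k]=0$, both $\operatorname{ad}u$ and $\operatorname{ad}c$ act by zero on $Z(\k)$; hence $Z(\k)^c=Z(\k)$ and $Z(\k)_u=Z(\k)$. So the map in question sends $w\in Z(\k)$ to its class $[w]$, viewed as a $u$-cocycle in $\c(\k)^c$, and the task becomes showing each such $[w]$ is trivial --- this is exactly the claim $Z(\k)\sub[u,\c(\k)^c]$. To prove it, write $w=[u,z]$ with $z\in\g^c$, which is possible because $Z(\k)\sub\k\sub[u,\g^c]$. Using semisimplicity of the $\k$-action on $\g$, I would split $\g^c=\c(\k)^c\oplus R$ with $R$ the sum of the nontrivial $\k$-isotypic components of $\g^c$ (so $R\cap\c(\k)^c=0$), and decompose $z=z_1+z_2$ accordingly. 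The key point is that $\operatorname{ad}u$ is a morphism of $\k$-modules, hence respects this splitting, so $[u,z_1]\in\c(\k)^c$ and $[u,z_2]\in R$. Comparing components in $w=[u,z_1]+[u,z_2]$ against $w\in\c(\k)^c$ forces $[u,z_2]=0$ and $w=[u,z_1]$, exhibiting $w$ as a coboundary within $\c(\k)^c$.

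This gives injectivity. For the final assertion I would apply \cref{invariance_ds_commute} to the adjoint module $V=\g$: since $\g^\k=\c(\k)$, the inclusion induces an isomorphism $\c(\k)_u\cong\g_u$, identifying $\g_u$ with a subalgebra of $\g(u,\k)$. I expect the main obstacle to be precisely the choice of coboundary witness inside the centralizer rather than in all of $\g^c$. It is tempting but wrong to try to argue $Z(\k)_u=0$, since in fact $Z(\k)_u=Z(\k)$; the real content is that the connecting map $Z(\k)_u\to\c(\k)_u$ vanishes, and this rests entirely on the $\operatorname{ad}u$-equivariance of the $\k$-isotypic decomposition. I would also verify carefully that $R$ contains no trivial $\k$-constituents, so that the component comparison is legitimate.
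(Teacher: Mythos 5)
Your proof is correct, and its mathematical core coincides with the paper's: both arguments hinge on the same splitting of $\g^c$ into $\c(\k)^c$ plus the sum of the nontrivial $\k$-isotypic components, together with the observation that $\operatorname{ad}u$ is a $\k$-module map (since $[u,\k]=0$) and hence preserves that splitting; this is exactly how the paper shows that an element of $Z(\k)$ which is a coboundary in $\g^c$ is already a coboundary in $\c(\k)^c$. Where you differ is the packaging: the paper runs a direct element chase (take a cocycle $y\in\c(\k)^c$ dying in $\g(u,\k)$, write $[u,w]=y+z$ with $z\in Z(\k)$, then repair the witness $v$ for $z$), whereas you reduce injectivity, via the long exact sequence attached to $0\to Z(\k)\to\c(\k)\to\c(\k)/Z(\k)\to 0$, to the vanishing of the induced map $Z(\k)_u\to\c(\k)_u$; your computation $Z(\k)_u=Z(\k)$, and the warning that one cannot hope for $Z(\k)_u=0$, is a genuinely clarifying point that the paper's chase leaves implicit. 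One caveat: property (3) of $DS_u$ is stated in the paper only for modules in $\CC$, and $Z(\k)$, $\c(\k)$, $\c(\k)/Z(\k)$ are not $\g$-modules, so you should say explicitly that the long exact sequence you invoke is simply the snake lemma for a short exact sequence of two-periodic complexes (with exactness of $(-)^c$ coming from semisimplicity of $\operatorname{ad}c$), or equivalently apply the $DS$ formalism to $\c(\k)$-modules, noting $u\in\c(\k)_{\ol{1}}$ with $[u,u]=c\in\c(\k)_{\ol{0}}$. Your handling of the final assertion, via Lemma \ref{invariance_ds_commute} applied to the adjoint representation, is the same as the paper's. Net effect: both proofs cost the same key computation; yours is more structural and would track the failure of injectivity (as the image of $Z(\k)$) in settings where the isotypic argument breaks down, while the paper's is shorter and self-contained.
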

\begin{proof}
	Suppose $y\in\c(\k)^c$ such that $[u,y]=0$ and there exists $w\in\c(\k)^c$ such that $[u,w]=y+z$ where $z\in Z(\k)$.  But $Z(\k)\sub\k\sub[u,\g^c]$, so $z\in[u,\g^c]$, and we may write $z=[u,v]$ for some $v\in\g^c$.  
	
	However if we decompose $v=v_0+v'$ where $v_0\in\c(\k)^c$ and $v'$ is in the $\k$-complement of $\c(\k)^c$ in $\g^c$, then $z=[u,v_0]+[u,v']$, and since $u\in\c(\k)$ this implies that $z=[u,v_0]$.  Therefore $y=[u,u-v_0]$, so we are done.
\end{proof}

\begin{lemma}
	Suppose that $\ft'\sub\ft\sub[u,\g^c]$ are toral subalgebras of $[u,\g^c]$.  Consider the natural morphisms of algebras
	\[
	\xymatrix{C(\ft)/\ft'\ar[r]^\phi\ar[d]_\psi & C(\ft')/\ft'\\ C(\ft)/\ft &}.
	\]
	This induces morphisms:
	\[
	\xymatrix{\left(C(\ft)/\ft'\right)_u\ar[r]^{\phi_u}\ar[d]_{\psi_u} & \g(u,\ft')\\ \g(u,\ft) &}.
	\]
	Then we have $\phi_u$ is an isomorphism and $\psi_u$ is injective.
\end{lemma}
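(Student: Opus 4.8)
The plan is to treat the two claims separately: $\phi_u$ via the invariance Lemma \ref{invariance_ds_commute}, and $\psi_u$ via the long exact sequence of $DS_u$ together with an $\ft$-weight computation. First I would record the standing facts that make everything well-defined. For any toral $\ft\sub[u,\g^c]$ one automatically has $[c,\ft]=0$ (since $\ft\sub\g^c$) and $[u,\ft]=0$: if $s=[u,y]$ with $y\in\g^c$, the super-Jacobi identity gives $2[u,s]=[[u,u],y]=[c,y]=0$. Hence $u$ and $c$ act as $0$ on $\ft$ and $\ft'$, and $u\in C(\ft)\sub C(\ft')$, so the adjoint actions of $u,c$ descend to $C(\ft)/\ft'$, $C(\ft)/\ft$, and $C(\ft')/\ft'$, and $DS_u$ is defined on each.

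For $\phi_u$, I would view $V:=C(\ft')/\ft'$ as an $\ft$-module under the adjoint action. Since $\ft$ is toral it acts semisimply, so $(-)^{\ft}$ is exact; applying it to $0\to\ft'\to C(\ft')\to V\to 0$ and using $C(\ft')^{\ft}=C(\ft')\cap C(\ft)=C(\ft)$ (as $C(\ft)\sub C(\ft')$) identifies $V^{\ft}=C(\ft)/\ft'$, with $\phi$ being precisely the inclusion $V^{\ft}\hookrightarrow V$. The proof of Lemma \ref{invariance_ds_commute} applies verbatim to $V$ (its only inputs are that $\ft$ acts semisimply on $V$ and that $\ft\sub[u,\g^c]$ forces $\ft$ to act by $0$ on $V_u$), so this inclusion induces an isomorphism $(V^{\ft})_u\xrightarrow{\sim}V_u$, which is exactly $\phi_u$.

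For $\psi_u$ I would use the short exact sequence $0\to\ft/\ft'\to C(\ft)/\ft'\xrightarrow{\psi}C(\ft)/\ft\to 0$, on which $u$ acts as $0$ on $\ft/\ft'$, so that $(\ft/\ft')_u=\ft/\ft'$. The middle-exactness of $DS_u$ (property (3)) gives a long exact sequence whose relevant segment is $\Pi(C(\ft)/\ft)_u\xrightarrow{\delta}\ft/\ft'\to(C(\ft)/\ft')_u\xrightarrow{\psi_u}(C(\ft)/\ft)_u$; by exactness, $\psi_u$ is injective as soon as the connecting map $\delta$ is surjective, and unwinding $\delta$ shows that a $u$-cocycle $\bar z\in(C(\ft)/\ft)^c$ maps to $[u,z]\bmod\ft'$.

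The crux, which I expect to be the main obstacle, is producing for each $t\in\ft$ a genuine $c$-invariant $u$-cocycle hitting $t$ under $\delta$. Here I would write $t=[u,v]$ with $v\in\g^c$ (possible since $\ft\sub[u,\g^c]$) and decompose $v=\sum_\lambda v_\lambda$ into $\ft$-weight spaces of $\g^c$; as $u$ has $\ft$-weight $0$, each $[u,v_\lambda]$ has weight $\lambda$, so comparison with $t\in\ft$ (weight $0$) forces $t=[u,v_0]$ with $v_0\in(\g^c)^{\ft}=\g^c\cap C(\ft)$. Then $\bar{v_0}$ is a $c$-invariant $u$-cocycle in $C(\ft)/\ft$ with $\delta[\bar{v_0}]=t\bmod\ft'$, and as $t$ ranges over $\ft$ these exhaust $\ft/\ft'$, giving surjectivity of $\delta$ and hence injectivity of $\psi_u$. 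The delicate point is that $v_0$ genuinely lies in $\g^c$ (not merely represents a class modulo $\ft'$), which is what makes $\bar{v_0}$ truly $c$-invariant and validates the connecting-map computation; a naive direct argument only produces a coboundary witness in $C(\ft)$ that need not be $c$-invariant, and it is exactly this weight decomposition that repairs the defect.
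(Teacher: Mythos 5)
Your proof is correct, and on the second claim it takes a genuinely different route from the paper's. For $\phi_u$ you do essentially what the paper does: identify $\phi$ with the inclusion $(C(\ft')/\ft')^{\ft}\hookrightarrow C(\ft')/\ft'$ and invoke Lemma \ref{invariance_ds_commute}. For $\psi_u$, however, the paper simply applies its earlier Lemma \ref{quotient_ds_is_larger} to $\g=\c(\ft)/\ft'$ and $\k=\ft/\ft'$ --- a direct element-chase showing that a $u$-cocycle in $C(\ft)/\ft'$ which becomes a coboundary modulo $\ft$ was already a coboundary --- whereas you run the long exact sequence of $DS_u$ and prove surjectivity of the connecting map onto $\ft/\ft'$. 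The two arguments have the same mathematical core: your key step (write $t=[u,v]$ with $v\in\g^c$ and project onto the $\ft$-weight-zero component $v_0\in C(\ft)^c$) is precisely the decomposition $v=v_0+v'$ in the paper's proof of Lemma \ref{quotient_ds_is_larger}, with ``weight-zero part'' replacing ``component in $\c(\k)^c$''; only the homological packaging differs. The paper's route is more economical (the lemma is already proved); yours is self-contained, and as a bonus your weight-decomposition step is exactly a proof of the assertion the paper leaves unproved, namely ``Observe that $\ft/\ft'\sub[u,(\c(\ft)/\ft')^c]$.'' One wiring point you should fix: in your $\phi_u$ paragraph, the claim that ``$\ft\sub[u,\g^c]$ forces $\ft$ to act by $0$ on $V_u$'' is not automatic, since the homotopy witness $y$ in $t=[u,y]$ must itself act on $V=C(\ft')/\ft'$, and a general $y\in\g^c$ does not. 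You need the witness to lie in $C(\ft)^c$ --- which is exactly what your later weight argument produces --- so that argument should be stated first (or cited forward); with that reordering the proof is complete.
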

\begin{proof}
	The morphism $\phi$ is an inclusion of $\c(\ft)/\ft'$-modules.  Observe that $\ft/\ft'\sub[u,(c(\ft)/\ft')^c]$ is a subalgebra with semisimple representation theory and $(\c(\ft')/\ft')^{\ft/\ft'}=\c(\ft)/\ft'$.  Thus by Lemma \ref{invariance_ds_commute} the morphism $\phi_u$ is an isomorphism.
	
	The map $\psi_u$ is injective by Lemma \ref{quotient_ds_is_larger} applied to $\g=\c(\ft)/\ft'$ and $\k=\ft/\ft'$.   
\end{proof}

In summary, we have shown the following proposition.
\begin{prop}
	 Given $u\in\g_{\ol{1}}$ with $[u,u]=c$ a semisimple operator, then the largest Lie superalgebra of the form $\g(u,\k)$ is obtained by taking $\k=\ft\sub[u,\g^c]$ a maximal toral subalgebra of $[u,\g^c]$.
\end{prop}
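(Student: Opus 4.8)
The plan is to show that among all choices of $\k$, the maximal toral subalgebra $\ft\sub[u,\g^c]$ yields the largest Lie superalgebra $\g(u,\k)$. First I would make precise what ``largest'' means: we have the injection $\phi_u:\g(u,\k)\hookrightarrow\g(u,\ft)$ established in the lemma proving injectivity of $\phi_u$, where $\ft$ is a maximal toral subalgebra of $\k$. So the strategy is to compare an arbitrary $\k$ against a maximal toral subalgebra of $[u,\g^c]$, and to argue that (i) every $\g(u,\k)$ injects into some $\g(u,\ft)$, and (ii) the maximal $\ft$ gives the largest such target, with all maximal tori being conjugate and hence giving isomorphic $\g(u,\ft)$.

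The key steps, in order, would be as follows. I would begin from the observation that for any admissible $\k$, its maximal toral subalgebra $\ft_\k$ is itself a toral subalgebra contained in $[u,\g^c]$ (since $\k\sub[u,\g^c]$), and by the injectivity lemma for $\phi_u$ we get $\g(u,\k)\hookrightarrow\g(u,\ft_\k)$. Thus nothing is lost by restricting attention to toral $\k$. Next I would invoke the final lemma on nested toral subalgebras: if $\ft_\k\sub\ft$ with $\ft$ a maximal toral subalgebra of $[u,\g^c]$, then $\psi_u:\left(\c(\ft)/\ft_\k\right)_u\to\g(u,\ft)$ is injective and $\phi_u:\left(\c(\ft)/\ft_\k\right)_u\to\g(u,\ft_\k)$ is an isomorphism. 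Composing, I obtain an injection $\g(u,\ft_\k)\cong\left(\c(\ft)/\ft_\k\right)_u\hookrightarrow\g(u,\ft)$. Chaining these two injections gives $\g(u,\k)\hookrightarrow\g(u,\ft)$ for the maximal $\ft$, which exhibits $\g(u,\ft)$ as at least as large as any $\g(u,\k)$.

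The remaining point is to confirm that the choice of maximal toral $\ft$ does not matter, i.e.\ that all maximal toral subalgebras of $[u,\g^c]$ produce isomorphic $\g(u,\ft)$. Here I would use that maximal tori of the subgroup $K\sub GL(\g)$ integrating $\k$ (equivalently, of the relevant reductive group) are conjugate, so any two maximal toral $\ft,\ft'$ are related by an element of $K$; since $K$ acts trivially on $DS_u$ (as recorded before the Weyl-group lemma), the induced isomorphism $\g(u,\ft)\cong\g(u,\ft')$ is compatible with the action on the functor. This establishes that the maximal toral choice gives a well-defined largest Lie superalgebra.

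I expect the main obstacle to be the bookkeeping in chaining the injections coherently: one must verify that the injection $\g(u,\k)\hookrightarrow\g(u,\ft_\k)$ and the isomorphism/injection from the nested-tori lemma fit together as maps of Lie superalgebras acting on $DS_u$, rather than merely as abstract injections of super vector spaces. Concretely, the delicate step is ensuring that the intermediate object $\left(\c(\ft)/\ft_\k\right)_u$ is identified correctly on both sides so that the composite $\g(u,\k)\hookrightarrow\g(u,\ft_\k)\xleftarrow{\sim}\left(\c(\ft)/\ft_\k\right)_u\hookrightarrow\g(u,\ft)$ is genuinely a morphism of Lie superalgebras compatible with the actions on all $V_u$. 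Once the compatibility with the $\g^{\k}$-module structures is tracked through each of the earlier lemmas, the proposition follows formally.
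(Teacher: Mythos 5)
Your proposal is correct and follows essentially the same route as the paper: the proposition there is stated explicitly as a summary of the preceding lemmas, namely the injectivity of $\phi_u:\g(u,\k)\to\g(u,\ft_\k)$ for $\ft_\k$ a maximal toral subalgebra of $\k$, chained with the nested-tori lemma giving $\g(u,\ft_\k)\cong\left(\c(\ft)/\ft_\k\right)_u\hookrightarrow\g(u,\ft)$, exactly as you do. Your additional paragraph on conjugacy of maximal toral subalgebras goes beyond what the paper argues (the paper does not address uniqueness of the maximal choice, and since $[u,\g^c]$ is only a subspace rather than a subalgebra, conjugacy of its maximal toral subalgebras is not immediate); this extra claim is not needed for the statement as written.
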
  

\subsection{Classical Lie superalgebras}\label{sec_classical} Suppose that $\g$ is quasireductive, meaning that $\g_{\ol{0}}$ is reductive and $\g_{\ol{1}}$ is a finite-dimensional, semisimple $\g_{\ol{0}}$-module.  We set
\[
\g_{\ol{1}}^{hom}:=\{u\in\g_{\ol{1}}:[u,u]\text{ is semisimple in }\g_{\ol{0}}\}.
\]
Now we consider the case when $\g$ is finite-dimensional, symmetrizable Kac-Moody (see \cite{serganova_KM}).
\begin{lemma}\label{lemma_form}
	Let $\g$ be a finite-dimensional, symmetrizable Kac-Moody Lie superalgebra, and let $u\in\g_{\ol{1}}^{hom}$.  Then there exists a Cartan subalgebra $\ft\sub\g_{\ol{0}}$ and mutually orthogonal, linearly independent isotropic roots $\alpha_1,\dots,\alpha_r$ such that 
	\[
	u=u_{\alpha_1}+\dots+u_{\alpha_r}+c_1v_{\alpha_1}+\dots+c_kv_{\alpha_r}
	\]
	where $u_{\alpha_i}\in\g_{\alpha_i}$, $v_{\alpha_i}\in\g_{-\alpha_i}$, and $c_i\in\C$.   We say that an element $u$ of the above form has rank $r$.
\end{lemma}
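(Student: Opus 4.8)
The plan is to exploit the root space decomposition determined by the semisimple element $c:=[u,u]$ and then to peel off one isotropic root at a time, inducting on $\dim\g$.

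\textbf{Reduction via the centralizer of $c$.} The super Jacobi identity applied to the triple $(u,u,u)$ gives $[[u,u],u]=0$, so $[c,u]=0$ and $u$ lies in the centralizer $\g^c$. Since $c$ is semisimple in the reductive algebra $\g_{\ol{0}}$, I may choose a Cartan subalgebra $\ft\sub\g_{\ol{0}}$ with $c\in\ft$ and decompose $\g=\ft\oplus\bigoplus_{\alpha\in\Delta}\g_\alpha$. As $u$ is odd it has no $\ft$-component, so $u=\sum_{\alpha\in S}u_\alpha$ with $u_\alpha\in\g_\alpha\setminus\{0\}$ over a finite set $S$ of odd roots, each satisfying $\alpha(c)=0$. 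The relation $[u,u]=c\in\ft$ says precisely that all the nonzero-weight components of $\sum_{\alpha,\beta}[u_\alpha,u_\beta]$ cancel.

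\textbf{Producing one isotropic root.} Fix a generic height function on the root lattice so that $S$ has a unique maximal element $\alpha_1$. The $\g_{2\alpha_1}$-component of $[u,u]$ is then exactly $[u_{\alpha_1},u_{\alpha_1}]$, which must vanish. For a symmetrizable Kac-Moody superalgebra a nonisotropic odd root $\alpha$ has $(\alpha,\alpha)\neq0$, $2\alpha$ is an (even) root, and $[x,x]\neq0$ for $0\neq x\in\g_\alpha$; hence $[u_{\alpha_1},u_{\alpha_1}]=0$ forces $(\alpha_1,\alpha_1)=0$, i.e. $\alpha_1$ is isotropic. Let $v_{\alpha_1}\in\g_{-\alpha_1}$ be a root vector (the space being one-dimensional for an isotropic root) and set $h_{\alpha_1}:=[u_{\alpha_1},v_{\alpha_1}]\in\ft$, with $\alpha_1(h_{\alpha_1})=(\alpha_1,\alpha_1)=0$.

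\textbf{Splitting and induction.} The pair $(u_{\alpha_1},v_{\alpha_1})$ spans, together with $h_{\alpha_1}$, a subalgebra $\s\cong\g\l(1|1)$ (degenerating to $\C^{0|1}$ when the $-\alpha_1$ contribution is absent). I will conjugate $u$ by the even subgroup centralizing $c$ so that, after recording the $\alpha_1$- and $(-\alpha_1)$-components as $u_{\alpha_1}+c_1v_{\alpha_1}$, the remaining part $u'$ centralizes $h_{\alpha_1}$ and the $\s$-piece. The centralizer $\g':=\g^{h_{\alpha_1}}$ modulo the $\s$-piece is again a finite-dimensional symmetrizable Kac-Moody superalgebra with Cartan contained in $\ft$, and crucially its roots $\beta$ satisfy $\beta(h_{\alpha_1})=(\beta,\alpha_1)=0$, so every further root produced is orthogonal to $\alpha_1$. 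Since $[u',u']=c-2c_1h_{\alpha_1}$ again lies in $\ft$ and is therefore semisimple, $u'\in(\g')_{\ol{1}}^{hom}$ and has strictly smaller rank; the inductive hypothesis yields mutually orthogonal isotropic roots $\alpha_2,\dots,\alpha_r$ and the stated normal form for $u'$, which combines with the $\alpha_1$-term to give the claim.

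\textbf{Main obstacle.} The delicate point is the conjugation in the last step: I must show that the components of $u$ lying in root spaces $\g_\beta$ with $(\beta,\alpha_1)\neq0$ can be removed by an element of the even centralizer of $c$, so that the remainder genuinely lands in $\g^{h_{\alpha_1}}$, and that the resulting centralizer-subquotient is again of contragredient Kac-Moody type, so that the inductive hypothesis applies and the rank behaves additively. This is where the explicit $\g\l(1|1)$-theory along the isotropic direction $\alpha_1$ (analogous to Example \ref{gl(1|1) example}) is used to straighten $u$.
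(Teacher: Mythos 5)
Your reduction to the centralizer of $c$ and the observation that the maximal element $\alpha_1$ of the support (with respect to a generic linear functional) must be isotropic are both correct. However, the proof has a genuine gap exactly where you flag your ``main obstacle'': you never prove that the components of $u$ in root spaces $\g_\beta$ with $(\beta,\alpha_1)\neq 0$ can be removed by conjugation by the even centralizer of $c$. This is not a technicality that the $\g\l(1|1)$-theory along $\alpha_1$ will straighten out for free --- it is essentially the entire content of the lemma. Such components genuinely occur: in $\g\l(2|2)$ the element $u=E_{13}+E_{23}$ satisfies $[u,u]=0$ and is supported on the roots $\epsilon_1-\delta_1$ and $\epsilon_2-\delta_1$, which are not orthogonal; bringing $u$ to the normal form $E_{13}$ requires an actual $GL(2)\times GL(2)$-conjugation argument (rank normal form), not bookkeeping of weight spaces. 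In the $c=0$ case the statement that every self-commuting odd element is conjugate to a sum of root vectors for mutually orthogonal isotropic roots is precisely Theorem 5.1 of \cite{ds_bigpaper}, which is a nontrivial theorem; your induction, as written, silently assumes a step of comparable strength. There are also secondary loose ends: the induction quantity is ambiguous (you induct on $\dim\g$ but terminate by ``strictly smaller rank'', a notion only defined by the normal form being proved); the claim that $\g^{h_{\alpha_1}}$ modulo the $\g\l(1|1)$-piece is again finite-dimensional symmetrizable Kac-Moody needs justification (this is Lemma 3.1 of \cite{splitting}, not an obvious fact); and the roots $\alpha_2,\dots,\alpha_r$ produced by the inductive hypothesis live relative to a possibly different Cartan subalgebra of the smaller algebra, so gluing them to $\alpha_1$ over a single Cartan $\ft$ requires an extra argument.

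For comparison, the paper's proof avoids re-proving any conjugation statement: it splits $\c(c)=\g^1\times\cdots\times\g^k$ into Kac-Moody and abelian factors by Lemma 3.1 of \cite{splitting}, reducing to the case where $\g$ is indecomposable and $c=u^2$ is central; when $c=0$ it cites Theorem 5.1 of \cite{ds_bigpaper}, and when $c\neq 0$ the classification forces $\g=\g\l(n|n)$ with $c$ a nonzero multiple of the identity, where the normal form is elementary. Your strategy could in principle give an independent, self-contained proof, but only if you supply the straightening argument (for instance, by adapting the proof of Theorem 5.1 of \cite{ds_bigpaper}, which proceeds by analyzing the $G_0$-orbits on the self-commuting cone); as it stands, the decisive step is asserted rather than proved.
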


\begin{proof}
	Write $c=u^2$, and consider the Lie subalgebra $\c(c)$; by Lem. 3.1 of \cite{splitting}, we can write $\c(c)=\g^1\times\cdots\times\g^k$, where each $\g^i$ is itself Kac-Moody or even abelian.  Since $u\in\c(c)$, we can write $u=u_1+\dots+u_k$ where $u_i\in\g^i$ such that $u_i^2$ is central in $\g^i$. 
	
	It therefore suffices to prove the statement in the case when $\g$ is indecomposable Kac-Moody and $u^2=c$ is central.  If $c=0$, the statement is proven in Thm 5.1 of \cite{ds_bigpaper}.  If $c\neq0$, then by the classification of finite-dimensional Kac-Moody Lie superalgebras, $\g=\g\l(n|n)$ for some $n$, and $c$ is a non-zero scalar multiple of the identity matrix; the statement in this case is straightforward.
\end{proof}

\begin{remark}
	Note that in the above, proof, $k$ will be at most $r$, and will be equal to $r$ if and only if $\c(\g)$ is a product of $k$ copies of $\g\l(1|1)$.  The above proof technique is very similar to what is used in Prop. 3.3 of \cite{splitting}.
\end{remark}

\begin{example}
	Let us do an example to see how the proof of \cref{lemma_form} works in a particular case.  Consider the following element $u$ of $\g\l(4|4)$:
	\[
	\begin{bmatrix} &&&& 1 &&& \\ &&&&&1&& \\ &&&&&&1& \\ &&&&&&&1 \\ 1 &&&&&&& \\ & 1 &&&&&& \\ &&-1&&&&&\\ &&&0&&&&\end{bmatrix}
	\]
	we have $c=u^2$ is given by $\operatorname{diag}(1,1,-1,0,1,1,-1,0)$, and thus its centralizer is given by $\c(c)=\g\l(2|2)\times\g\l(1|1)\times\g\l(1|1)$.  Under this decomposition, $u$ is given by
	\[
	\left(\begin{bmatrix} 0 & 0 & 1 & 0\\ 0 & 0 & 0 &1\\ 1 & 0 & 0 &0 \\ 0 & 1 & 0 & 0\end{bmatrix}, \begin{bmatrix}0 & 1\\-1&0	\end{bmatrix}, \begin{bmatrix}0 & 1\\ 0 & 0	\end{bmatrix}\right).
	\]
	From here the decomposition desired decomposition into a sum of root vectors with $r=4$ is clear.
\end{example}

\begin{thm}\label{thm construction 1}
	Let $\g$ be a finite-dimensional Kac-Moody Lie superalgebra. Let 
	\[
	u=u_{\alpha_1}+\dots+u_{\alpha_r}+c_1v_{\alpha_1}+\dots+c_kv_{\alpha_r}
	\]
	as in Lemma \ref{lemma_form}.  Then 
	\[
	\ft'=\sum\limits_i[\g_{\alpha_i},\g_{-\alpha_i}]\sub [u,\g^c]
	\]
	is a maximal toral subalgebra of $[u,\g^c]$, and we have:
	\[
	\g(u,\ft')\cong \g_u\times \C\langle v_{\alpha_1},\dots,v_{\alpha_r}\rangle\cong\g_u\times\C^{0|r}
	\]
	where $v_{\alpha_1},\dots,v_{\alpha_r}$ are nonzero root vectors of weight $-\alpha_1,\dots,-\alpha_r$.  
\end{thm}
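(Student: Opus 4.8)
The plan is to reduce the computation of $\g(u,\ft')=(\c(\ft')/\ft')_u$ to a product of rank-one pieces by first exhibiting an explicit decomposition of the centralizer $\c(\ft')$. To set up, I would record the consequences of \cref{lemma_form}: since the $\alpha_i$ are mutually orthogonal isotropic roots, the root vectors satisfy $[u_{\alpha_i},u_{\alpha_j}]=[v_{\alpha_i},v_{\alpha_j}]=0$ and $[u_{\alpha_i},v_{\alpha_j}]=\delta_{ij}h_i$ with $h_i:=[u_{\alpha_i},v_{\alpha_i}]\in\ft$, so each triple $\{u_{\alpha_i},v_{\alpha_i},h_i\}$ spans a copy of $\s\l(1|1)$ with $h_i$ central, these copies commute, and $c=u^2=\sum_i c_ih_i$. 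Because $\alpha_i(h_j)=\langle\alpha_i,\alpha_j\rangle=0$, one gets $v_{\alpha_i}\in\g^c$ and $[u,v_{\alpha_i}]=h_i$, hence $h_i\in[u,\g^c]$; as the $\alpha_i$ are linearly independent so are the $h_i$, and $\ft'=\newspan\{h_i\}$ is an $r$-dimensional toral subalgebra contained in $[u,\g^c]$.

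Next I would prove the key structural statement
\[
\c(\ft')\cong \g\l(1|1)^{\oplus r}\times\g_u
\]
as a direct product of Lie superalgebras, in which the $i$-th factor $\g\l(1|1)_i$ contains $\s\l(1|1)_i$ together with a Cartan direction $t_i$ pairing nontrivially with $\alpha_i$, the subalgebra $\ft'$ is the product of the centers $\C h_i$, the element $u=\sum_i u_i$ has $u_i=u_{\alpha_i}+c_iv_{\alpha_i}$ lying in $\g\l(1|1)_i$, and $u$ acts trivially on the complementary factor. This is where the structure theory enters: $\c(\ft')$ is the regular (Levi-type) subalgebra spanned by $\ft$ and the root spaces $\g_\beta$ with $\langle\beta,\alpha_i\rangle=0$, and it splits as above with one $\g\l(1|1)$ factor "using up" each isotropic direction $\alpha_i$. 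The identification of the complementary factor $P$ with $\g_u$ is confirmed by \cref{invariance_ds_commute}, which gives $\g_u\cong\c(\ft')_u$: since $u$ acts trivially on $P$ and $c$ is central one has $P_u=P$, while a direct rank-one check gives $(\g\l(1|1)_i)_{u_i}=0$ for every $i$ (in both cases $c_i=0$ and $c_i\neq0$); hence $\c(\ft')_u=P$ and $P\cong\g_u$.

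Granting the decomposition, the conclusion is immediate: passing to $\c(\ft')/\ft'\cong\p\g\l(1|1)^{\oplus r}\times\g_u$ and applying $DS_u$ factorwise (legitimate since $u$ decomposes compatibly with the product and acts in separate factors) yields
\[
\g(u,\ft')=(\c(\ft')/\ft')_u\cong\prod_{i=1}^r(\p\g\l(1|1)_i)_{u_i}\times\g_u.
\]
A one-line computation gives $(\p\g\l(1|1)_i)_{u_i}\cong\C^{0|1}$, spanned by the class of $v_{\alpha_i}$ (a root vector of weight $-\alpha_i$), whence $\g(u,\ft')\cong\g_u\times\C^{0|r}$; crucially, the product structure makes centrality of the odd generators automatic, bypassing any delicate cohomological cancellation. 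Maximality of $\ft'$ in $[u,\g^c]$ also follows: a toral $\ft''\supseteq\ft'$ lies in $\c(\ft')\cap\ker(\operatorname{ad}u)$, and since the map $\c(\ft')_u\to\g_u$ is injective the factor $P$ meets $[u,\g^c]$ trivially, so the only even semisimple part of $[u,\g^c]$ inside $\c(\ft')$ is $\newspan\{h_i\}=\ft'$.

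I expect the main obstacle to be establishing the direct-product decomposition $\c(\ft')\cong\g\l(1|1)^{\oplus r}\times\g_u$ uniformly across all finite-dimensional symmetrizable Kac-Moody types (the orthosymplectic and exceptional series, and the $\s\l(n|n)$ case where one must track the supertrace and center), and in particular pinning down that the complementary Levi factor is genuinely $\g_u$ as an honest direct factor rather than merely abstractly isomorphic to it. The naive alternative—building the isomorphism from the short exact sequence $0\to\g_u\to\g(u,\ft')\to\C^{0|r}\to0$ and then verifying directly that the classes $v_{\alpha_i}$ are central—runs squarely into this difficulty, since $\operatorname{ad}v_{\alpha_i}$ need \emph{not} act trivially on an arbitrary $\s\l(1|1)_i$-module; it is precisely the self-contained Levi structure of the adjoint representation that excludes the offending Jordan-type summands and forces the centrality.
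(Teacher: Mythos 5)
Your proposal is correct and follows essentially the same route as the paper: the paper's own (two-line) proof rests on exactly the decomposition $\c(\ft')=\g_u\times\g\l(1|1)^r$, with one $\g\l(1|1)$ factor per root pair $\pm\alpha_i$, followed by the factorwise rank-one computation. The details you supply — identifying the complementary factor with $\g_u$ via Lemma \ref{invariance_ds_commute}, the computations $(\g\l(1|1))_{u_i}=0$ and $(\p\g\l(1|1))_{u_i}\cong\C^{0|1}$ in both cases $c_i=0$ and $c_i\neq 0$, and the maximality check for $\ft'$ — are precisely the ``straightforward computation'' the paper leaves implicit.
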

\begin{proof}
	Indeed we have $\c(\ft')=\g_u\times\g\l(1|1)^r$, where the copies of $\g\l(1|1)$ correspond to root subalgebras of $\alpha_1,\dots,\alpha_r$.  The computation from here is straightforward.
\end{proof}

\subsubsection{$\q(n)$} Next consider $\g=\q(n)$; it has a matrix presentation as elements of $\g\l(n|n)$ of the form:
\[
T_{A,B}=\begin{bmatrix}
A & B\\ B & A
\end{bmatrix}
\]
where $A,B\in\g\l(n)$ are arbitrary.  When $A=0$ we will simply write $T_B:=T_{0,B}$, and when $B=0$ we will write $T^A=T_{A,0}$. The following lemma follows from the fact that the action of $GL(n)$ on $\q(n)_{\ol{1}}$ is the adjoint representation.
\begin{lemma}
	For $\g=\q(n)$, every $u\in\g_{\ol{1}}^{hom}$ is conjugate to an element of the form
	\[
	u=T_{E_{12}}+T_{E_{34}}+\dots+T_{E_{2r-1,2r}}+c_1T_{E_{2r+1,2r+1}}+\dots+c_kT_{E_{2r+k,2r+k}}
	\]
	where $c_i\in\C$.  In this case we say $u$ has rank $r+k/2$.
\end{lemma}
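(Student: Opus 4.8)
\emph{The plan is to translate the statement into a conjugacy problem for a single $n\times n$ matrix and then apply elementary linear algebra.} Write $u=T_B$ for some $B\in\g\l(n)$. A direct block computation gives $T_{B_1}T_{B_2}=T^{B_1B_2}$, so that $u^2=T^{B^2}$ and hence $c:=[u,u]=2u^2=T^{2B^2}$. Under the identification $\g_{\ol 0}=\q(n)_{\ol 0}\cong\g\l(n)$, $T^A\mapsto A$, an element is semisimple precisely when the corresponding matrix is diagonalizable; thus $u\in\g_{\ol{1}}^{hom}$ if and only if $B^2$ is diagonalizable. Finally, since the action of $GL(n)=Q(n)_{\ol 0}$ on $\q(n)_{\ol 1}$ is the adjoint representation, conjugating $u$ by $g\in GL(n)$ sends $T_B$ to $T_{gBg^{-1}}$. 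The problem is therefore equivalent to classifying matrices $B$ with $B^2$ diagonalizable up to conjugacy in $GL(n)$.

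For the linear-algebra core I would use that $B$ commutes with $B^2$, so $B$ preserves each eigenspace of the diagonalizable operator $B^2$, yielding a $B$-stable decomposition $\C^n=\ker(B^2)\oplus\bigoplus_{\lambda\neq 0}\ker(B^2-\lambda)$. On each summand $\ker(B^2-\lambda)$ with $\lambda\neq 0$ the operator $B$ satisfies the separable polynomial $t^2-\lambda$, hence is diagonalizable there with eigenvalues in $\{\pm\sqrt{\lambda}\}$; choosing an eigenbasis contributes nonzero diagonal entries. On $\ker(B^2)$ the operator $B$ is nilpotent with $B^2=0$, and a square-zero nilpotent is conjugate to a direct sum of Jordan blocks of size at most two, that is, to $r$ copies of $E_{12}$ together with finitely many $1\times 1$ zero blocks. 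Reordering the basis so that the $2\times 2$ nilpotent blocks come first and all diagonal entries come afterward (the nonzero eigenvalues from the $\lambda\neq0$ summands, together with the zeros coming from the $1\times 1$ blocks) produces exactly
\[
B=E_{12}+E_{34}+\dots+E_{2r-1,2r}+c_1E_{2r+1,2r+1}+\dots+c_kE_{2r+k,2r+k},
\]
which is the claimed form for $u=T_B$.

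\emph{I do not expect a genuine obstacle here, as the argument is routine once the reduction is set up.} The only point deserving care is the structural observation that diagonalizability of $B^2$ forces $B$ to split as a diagonalizable part (on the nonzero $B^2$-eigenspaces) plus a square-zero nilpotent part (on $\ker B^2$), the latter having Jordan blocks of size at most two; everything else is a matter of bookkeeping and basis reordering. It is worth noting that $r$ is intrinsic, being the number of size-two Jordan blocks of $B$ on $\ker(B^2)$, equivalently $\dim\im\!\big(B|_{\ker B^2}\big)$, which is what makes the rank statistic well defined.
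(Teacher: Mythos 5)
Your proof is correct and takes essentially the same approach as the paper: the paper's entire justification is the one-sentence remark that the action of $GL(n)$ on $\q(n)_{\ol{1}}$ is the adjoint representation, leaving the reduction to matrices $B$ with $B^2$ diagonalizable and the resulting Jordan-type normal form implicit. Your write-up simply fills in that routine linear algebra (eigenspace decomposition of $B^2$, diagonalizability on the nonzero eigenspaces, square-zero Jordan blocks on $\ker B^2$), which is exactly what the paper intends.
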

Then we easily have:
\begin{prop}\label{superalgebra q case}
	Let $u=T_{E_{12}}+T_{E_{34}}+\dots+T_{E_{2r-1,2r}}+c_1T_{E_{2r+1,2r+1}}+\dots+c_kT_{E_{2r+k,2r+k}}$ as in the above lemma, with $c_1,\dots,c_k\neq0$.  Let 
	\[
	\ft':=\operatorname{Span}\{T^{E_{11}+E_{22}},T^{E_{33}+E_{44}},\dots,T^{E_{2r-1,2r-1}+E_{2r,2r}},T^{E_{2r+1,2r+1}},\dots,T^{E_{2r+k,2r+k}}\}.
	\]
	Then $\ft'\sub[u,\q(n)]$ is a maximal toral subalgebra, and we have
	\begin{eqnarray*}
	\g(u,\ft')& = &\q(n-2r-k)\times\C\langle T_{E_{21}},T_{E_{43}},\dots,T_{E_{2r,2r-1}},T_{E_{2r+1,2r+1}},\dots,T_{E_{2r+k,2r+k}}\rangle\\
	         & = &\q(n-2r-k)\times\C^{0|r+k}.
	\end{eqnarray*}
\end{prop}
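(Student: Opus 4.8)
The plan is to reduce everything to the centralizer $\c(\ft')$ of $\ft'$ in $\q(n)$, exploit that $DS_u$ is a tensor functor, and compute block by block. First I would record that $[u,u]=2u^2=2T^{D}$ with $D=\sum_j c_j^2 E_{2r+j,2r+j}$ diagonal, hence semisimple in $\q(n)_{\ol0}=\g\l(n)$, so $\g^c=\c(c)$ is well defined and contains $u$. Each generator of $\ft'$ is even and diagonal, hence semisimple, and they commute, so $\ft'$ is toral. To see $\ft'\sub[u,\g^c]$ I would exhibit explicit brackets: using $[T_B,T_{B'}]=T^{BB'+B'B}$ one computes
\[
[u,T_{E_{2i,2i-1}}]=T^{E_{2i-1,2i-1}+E_{2i,2i}},\qquad [u,T_{E_{2r+j,2r+j}}]=2c_j\,T^{E_{2r+j,2r+j}},
\]
and since every $c_j\neq0$ these span $\ft'$. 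A direct check (the indices involved lie where $D$ vanishes, or on the diagonal) shows the odd elements $T_{E_{2i,2i-1}},T_{E_{2r+j,2r+j}}$ lie in $\c(c)=\g^c$, so indeed $\ft'\sub[u,\g^c]$.

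Next I would compute $\c(\ft')$. The subalgebra $\ft'$ decomposes $\C^n$ into distinct weight spaces: the pairs $\{2i-1,2i\}$ for $i\le r$, the singletons $\{2r+j\}$ for $j\le k$, and the remaining block $\{2r+k+1,\dots,n\}$. Hence $T_{A,B}$ centralizes $\ft'$ iff $A,B$ are block diagonal for this partition, giving
\[
\c(\ft')\cong\q(2)^{r}\times\q(1)^{k}\times\q(n-2r-k),
\]
under which $\ft'$ is spanned by the distinguished even central element $T^{E_{2i-1,2i-1}+E_{2i,2i}}$ of each $\q(2)$ factor and $T^{E_{2r+j,2r+j}}$ of each $\q(1)$ factor, while $u=(u_1,\dots,u_r,u'_1,\dots,u'_k,0)$ with $u_i\in\q(2)$ the rank-one element and $u'_j\in\q(1)$ nonzero.

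Since $DS_u$ is a tensor functor, $\g(u,\ft')=(\c(\ft')/\ft')_u$ is the product of the Duflo--Serganova functor applied to the factors. On the last factor $u$ acts by $0$, contributing $\q(n-2r-k)$. On each $\q(1)/\C T^{E_{2r+j,2r+j}}\cong\C^{0|1}$ the induced operator vanishes, contributing $\C^{0|1}$ spanned by $T_{E_{2r+j,2r+j}}$. On each $\q(2)/\C T^{E_{2i-1,2i-1}+E_{2i,2i}}$ I would apply $DS_{u_i}$ to the short exact sequence $0\to\C T^{E_{2i-1,2i-1}+E_{2i,2i}}\to\q(2)\to\q(2)/\C T^{E_{2i-1,2i-1}+E_{2i,2i}}\to0$; using the rank-one computation $\q(2)_{u_i}=0$ (a direct check) together with the long exact sequence of the middle-exact functor $DS_{u_i}$, one obtains $\big(\q(2)/\C T^{E_{2i-1,2i-1}+E_{2i,2i}}\big)_{u_i}\cong\Pi\,\C T^{E_{2i-1,2i-1}+E_{2i,2i}}\cong\C^{0|1}$, with surviving class represented by $T_{E_{2i,2i-1}}$. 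Assembling the factors yields $\q(n-2r-k)\times\C^{0|r+k}$ with exactly the stated odd generators.

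Finally I would treat maximality of $\ft'$. A toral subalgebra strictly larger than $\ft'$ would centralize $\ft'$, hence lie in $\c(\ft')_{\ol0}=\g\l(2)^r\times\g\l(1)^k\times\g\l(n-2r-k)$, and consist of semisimple elements of $[u,\g^c]$. Reducing to each factor, the even part $[u,(\g^c)_{\ol1}]$ meets each $\q(2)$ block in $\C T^{E_{2i-1,2i}}\oplus\C T^{E_{2i-1,2i-1}+E_{2i,2i}}$, whose only semisimple elements form the line $\ft'\cap\q(2)$; it meets each $\q(1)$ block in $\ft'\cap\q(1)$; and the last factor contributes nothing because $u$ vanishes there. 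I expect this maximality step to be the main obstacle: the even part $[u,(\g^c)_{\ol1}]$ need not respect the block decomposition of $\c(\ft')$ a priori, since an odd $y\in(\g^c)_{\ol1}$ with $[u,y]\in\c(\ft')$ need not itself centralize $\ft'$, and the precise structure of $\g^c=\c(c)$ depends on whether the scalars $c_j^2$ are distinct; some care is needed to reduce cleanly to the block-wise statement. The cohomology bookkeeping in the $\q(2)$ factors is the other point requiring attention, though it is a finite explicit computation.
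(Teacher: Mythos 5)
You take essentially the route the paper intends: the paper in fact offers no proof here (the proposition is prefaced by ``Then we easily have''), and the implicit argument, modeled on the proof of \cref{thm construction 1}, is exactly your reduction to the centralizer $\c(\ft')\cong\q(2)^{r}\times\q(1)^{k}\times\q(n-2r-k)$ followed by a factor-by-factor computation of $DS_u$. Your block computations are correct: $\q(2)_{u_i}=0$, so middle-exactness of $DS_{u_i}$ applied to $0\to\C T^{E_{2i-1,2i-1}+E_{2i,2i}}\to\q(2)\to\q(2)/\C T^{E_{2i-1,2i-1}+E_{2i,2i}}\to0$ gives $\bigl(\q(2)/\C T^{E_{2i-1,2i-1}+E_{2i,2i}}\bigr)_{u_i}\cong\Pi\C$, represented by $T_{E_{2i,2i-1}}$; each $\q(1)$ factor contributes $\C T_{E_{2r+j,2r+j}}$; and the last factor contributes $\q(n-2r-k)$.

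The maximality step you flag as ``the main obstacle'' is not actually one, and the worry you raise dissolves with a one-line weight argument that your write-up is missing. Every term of $u$ has $\ft'$-weight zero, so $u\in\c(\ft')$; since $\operatorname{ad}\ft'$ acts semisimply on $\g$, the operator $\operatorname{ad}u$ preserves the decomposition $\g=\c(\ft')\oplus\g'$ into the zero and nonzero $\ft'$-weight spaces. Hence $[u,\g]\cap\c(\ft')=[u,\c(\ft')]$: if $y=y_0+y'$ with $y_0\in\c(\ft')$, $y'\in\g'$, and $[u,y]\in\c(\ft')$, then $[u,y']=0$ and $[u,y]=[u,y_0]$, even though $y$ itself need not centralize $\ft'$. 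Now any toral subalgebra is abelian, so a toral $\s\supseteq\ft'$ inside $[u,\q(n)]$ lies in $\c(\ft')\cap[u,\q(n)]=[u,\c(\ft')]$, whose even part is $\prod_{i}\bigl(\C T^{E_{2i-1,2i}}\oplus\C T^{E_{2i-1,2i-1}+E_{2i,2i}}\bigr)\times\prod_{j}\C T^{E_{2r+j,2r+j}}$; the only elements there acting semisimply on the adjoint module (which lies in $\CC$ by the standing assumptions) are those whose $T^{E_{2i-1,2i}}$-components all vanish, i.e.\ the elements of $\ft'$ itself. Note that this argument works directly in $[u,\q(n)]$, as the proposition is stated, and never invokes the structure of $\g^c$, so the distinctness or not of the scalars $c_j^2$ is irrelevant.
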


\subsubsection{$\p(n)$} Finally we look at the case of $\g=\p(n)$.  Recall that $\p(n)$ may be presented as the subalgebra of $\g\l(n|n)$ consisting of the matrices:
\[
\begin{bmatrix}
A & B\\ C& -A^{t}
\end{bmatrix}
\]
where $B^t=B$ and $C^t=-C$. Let $\h\sub\p(n)$ denote the diagonal matrices, which gives a Cartan subalgebra of $\p(n)$.  Then the odd roots with respect to $\h$ are $\{\epsilon_i+\epsilon_j\}_{1\leq i\leq j\leq n}\cup\{-\epsilon_{i}-\epsilon_j\}_{1\leq i<j\leq n}$.  We write $e_{\alpha}$ for a chosen nonzero root vector of weight $\alpha$.
\begin{lemma}
	Let $u\in\g_{\ol{1}}^{hom}$.  Then $u$ is conjugate to an element of the form
	\[
	u=e_{\epsilon_1+\epsilon_2}+\dots+e_{\epsilon_{2r-1}+\epsilon_{2r}}+de_{2\epsilon_{n}}+c_1e_{-\epsilon_1-\epsilon_2}+\dots+c_se_{-\epsilon_{2s-1}-\epsilon_{2s}}
	\]
where $c_i\in\C$, and either $d=0$, or $d=1$ in which case $2r,2s<n$.  
\end{lemma}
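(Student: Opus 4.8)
The plan is to turn the statement into a simultaneous normalization problem for the pair of matrices defining $u$, and then to separate the semisimple and nilpotent parts of $c=[u,u]$ as in the proof of \cref{lemma_form}. Writing an odd element as $u=\begin{bmatrix}0&B\\ C&0\end{bmatrix}$ with $B^t=B$ and $C^t=-C$, one computes $u^2=\begin{bmatrix}BC&0\\0&CB\end{bmatrix}$, so that $c$ corresponds (up to the scalar $2$) to $BC\in\g\l(n)=\p(n)_{\ol0}$, with $CB=-(BC)^t$; hence $u\in\g^{hom}_{\ol1}$ precisely when $BC$ is semisimple. Since $GL(n)=P(n)_{\ol0}$ acts by $B\mapsto gBg^t$ and $C\mapsto (g^t)^{-1}Cg^{-1}$, I must bring $(B,C)$ into the asserted form under this action, and it is useful to record first that the target element has $B$ a sum of hyperbolic $2\times2$ blocks (plus at most one $1\times1$ block $d$) and $C$ a sum of symplectic $2\times2$ blocks, so that $BC$ is diagonal and the semisimplicity is automatic; this fixes the dictionary between blocks and the root vectors $e_{\pm(\epsilon_{2i-1}+\epsilon_{2i})}$ and $e_{2\epsilon_n}$.

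Next I would diagonalize $N:=BC$ and decompose $V=\C^n=\bigoplus_\lambda V_\lambda$ into its eigenspaces. The relation $CB=-(BC)^t$ gives $C(Nv,w)=-C(v,Nw)$ (and an analogous identity for $B$), from which $C$ and $B$ each pair $V_\lambda$ with $V_{-\lambda}$ and annihilate all other pairs of eigenspaces. For $\lambda\neq0$ one checks $\ker C\cap V_\lambda=0$, so $C$ restricts to a nondegenerate pairing $V_\lambda\times V_{-\lambda}\to\C$; in particular $\dim V_\lambda=\dim V_{-\lambda}=:m_\lambda$. Using $GL(V_\lambda)\times GL(V_{-\lambda})$ I may normalize $C$ to the standard pairing, and then $BC=\lambda\,\id$ on $V_\lambda$ determines $B$ uniquely. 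Choosing dual bases decomposes $V_\lambda\oplus V_{-\lambda}$ into $m_\lambda$ two-dimensional blocks, each equal to a pair $e_{\epsilon_{2i-1}+\epsilon_{2i}}+c_i\,e_{-\epsilon_{2i-1}-\epsilon_{2i}}$ with $c_i=\pm\lambda\neq0$. This disposes of all nonzero eigenvalues.

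The remaining and genuinely new case is $V_0=\ker N$, where $BC=CB=0$. Here the symmetry of $B$ and antisymmetry of $C$ make the two conditions equivalent to the single inclusion $\im C\subseteq\ker B$ (inside $V^*$), equivalently $\im B\subseteq\ker C$ (inside $V$). Since $\im B=(\ker B)^\perp$ and $\im C=(\ker C)^\perp$ for the standard pairing between $V$ and $V^*$, these inclusions force the supports of the two forms onto complementary coordinates: choosing a basis in which $C$ is symplectic-standard on $\langle e_1,\dots,e_{2s_0}\rangle$ with radical $\langle e_{2s_0+1},\dots\rangle$, the functionals $f_1,\dots,f_{2s_0}$ lie in $\ker B$, so $B$ involves only the coordinates $e_{2s_0+1},\dots$. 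A block-diagonal $g$ then normalizes $C$ and $B$ independently: $C$ to symplectic pairs, and $B$, by the congruence $B\mapsto gBg^t$, to hyperbolic pairs together with at most one $1\times1$ block. The leftover $1\times1$ block occurs exactly when $\operatorname{rank}B$ is odd; scaling it to $d=1$ over $\C$ and placing it at the last coordinate produces the term $d\,e_{2\epsilon_n}$, and the disjointness of the three kinds of blocks is precisely what forces $2r,2s<n$ in that case.

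Finally I would reassemble: after a coordinate permutation (which lies in $GL(n)$) the two-dimensional blocks from the nonzero eigenvalues and the hyperbolic/symplectic blocks from $V_0$ line up into the stated expression, where pairs carrying both a $B$- and a $C$-block record a nonzero $c_j$, and purely $B$- or purely $C$-type pairs are recorded by setting the corresponding $c_j$ to $0$. I expect the only real difficulty to be the nilpotent case: not the existence of the symplectic and hyperbolic normal forms individually, but verifying that the vanishing of $BC$ genuinely decouples $B$ and $C$ onto disjoint coordinates, and then tracking the parity of $\operatorname{rank}B$ carefully enough to extract the single term $d\,e_{2\epsilon_n}$ together with the constraint $2r,2s<n$.
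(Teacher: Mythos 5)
Your argument is correct, and it reaches the normal form by a genuinely different route than the paper. The paper normalizes the symmetric matrix first, bringing $B$ to $\begin{bmatrix}I&0\\0&0\end{bmatrix}$, computes the stabilizer of $B$ in $GL(n)$, and then spends the semisimplicity of $u^2$ twice in block form: once to make the block $C_1$ of $C$ on the support of $B$ semisimple (hence a sum of $2\times2$ skew blocks after orthogonal conjugation), and once to force the mixed block $C_2$ to be supported against the invertible part of $C_1$, so that the unipotent elements $\begin{bmatrix}I&Y\\0&I\end{bmatrix}$ of the stabilizer kill $C_2$; the remaining block-diagonal pair is then normalized directly. You instead diagonalize $N=BC$ at the outset, so semisimplicity is used exactly once; the nonzero eigenvalues are handled by the duality pairing $C\colon V_\lambda\times V_{-\lambda}\to\C$, which produces precisely the blocks carrying both a $B$- and a $C$-component ($c_i=\pm\lambda$), and the whole difficulty is concentrated in $V_0$, where $BC=0$ is equivalent to $\im C\sub\ker B$ and decouples the two forms onto complementary coordinates, to be normalized independently (symplectic pairs for $C$; hyperbolic pairs plus at most one $1\times1$ block for $B$, according to the parity of its rank). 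Your version is more invariant: it isolates where the hypothesis $u\in\g_{\ol{1}}^{hom}$ enters and makes the dictionary between the spectrum of $BC$ and the parameters $r,s,c_i,d$ (including why $d=1$ forces $2r,2s<n$) transparent; the paper's version is more economical in matrix terms, and its stabilizer computation is what one would want anyway for finer questions such as non-semisimple odd elements or orbit closures. One small quibble: your opening appeal to separating ``semisimple and nilpotent parts as in \cref{lemma_form}'' is misleading, since $c$ has no nilpotent part by hypothesis and \cref{lemma_form} is proved by splitting the centralizer of $c$, which is not what you do; your actual argument never uses that framing.
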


\begin{proof}
	The proof is an exercise in linear algebra; we begin with a matrix $u=\begin{bmatrix}0 & B\\ C& 0\end{bmatrix}$ such that $u^2$ is semisimple.  We may begin by conjugating $B$ to a matrix of the form $B=\begin{bmatrix} I_r & 0 \\ 0 & 0	\end{bmatrix}$.  The stabilizer of $B$ in $GL(n)$ is given by matrices of the form 
	\[
	\begin{bmatrix}
	X & Y \\0 & Z
	\end{bmatrix}
	\]
	where $X^t=X^{-1}\in GL(r)$, $Z\in GL(n-r)$, and $Y$ is an arbitrary $r\times(n-r)$-matrix.  Now consider $C=\begin{bmatrix} C_1 & C_2\\ -C_2^t & C_3\end{bmatrix}$, where $C_1^t=-C_1$, $C_3^t=-C_3$.  Then the condition that $u$ is semisimple implies first of all that $C_1$ is semisimple. Conjugating by $O(n)$, we may assume that $C_1$ looks the following way:
	\[
	C_1=\begin{bmatrix} J_1 & 0 & 0 & \dots& & 0\\ 0 & J_2 & 0 & & & \vdots\\ 0  & 0 & J_3 &  & &\\ \vdots & &&\ddots && \end{bmatrix}.
	\]	
	where $J_i=\begin{bmatrix} 0 & \lambda_i\\ -\lambda_i & 0 \end{bmatrix}$.  We can further assume that $J_1,\dots,J_i\neq0$, but $J_k=0$ for $k>i$. Thus $C_1$ is in fact of the form
	\[
	C_1=\begin{bmatrix}
	R & 0\\0 & 0
	\end{bmatrix}
	\]
	where $R$ is invertible and semisimple.  Now the condition that $u^2$ is semisimple implies that $C_2$ must have the form
	\[
	C_2=\begin{bmatrix} S & T\\0 & 0\end{bmatrix}.
	\]
	Now we may conjugate by a matrix of the form $\begin{bmatrix}I_r & Y\\0 & I_{n-r}\end{bmatrix}$ in order to assume that in fact $C_2=0$.  From here, the result is straightforward.
\end{proof}

\begin{prop}\label{superalgebra p case}
 	Let	$u\in\g_{\ol{1}}^{hom}$ be of the form
	\[
	u=e_{\epsilon_1+\epsilon_2}+\dots+e_{\epsilon_{2r-1}+\epsilon_{2r}}+de_{\epsilon_{n}}+c_1e_{-\epsilon_1-\epsilon_2}+\dots+c_se_{-\epsilon{2s-1}-\epsilon_{2s}}.
	\]
Let $t=\max(r,s)$.  Then
	\[
	\ft'=\sum\limits_{i=1}^{t}[\g_{\epsilon_{2i-1}+\epsilon_{2i}},\g_{-\epsilon_{2i-1}-\epsilon_{2i}}]\sub[u,\p(n)]
	\]
	is a maximal toral subalgebra of $[u,\p(n)]$.  We have
	\[
	\g(u,\ft')=\p(n-t-d)\times\C^{0|t}.
	\]
\end{prop}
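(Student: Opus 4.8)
My plan is to mirror the proofs of \cref{thm construction 1} and \cref{superalgebra q case}: compute the centralizer $\c(\ft')$ explicitly, recognize it as a product of a smaller periplectic algebra with copies of $\g\l(1|1)$, and then apply $DS_u$ factorwise. First I would evaluate the bracket $h_i:=[e_{\epsilon_{2i-1}+\epsilon_{2i}},e_{-\epsilon_{2i-1}-\epsilon_{2i}}]$; in the presentation $\begin{bmatrix}A&B\\C&-A^t\end{bmatrix}$ a direct computation gives $h_i=E_{2i,2i}-E_{2i-1,2i-1}\in\h$, so the $h_i$ are linearly independent semisimple elements and $\ft'=\bigoplus_{i=1}^t\C h_i$ is toral of dimension $t$. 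To place $\ft'$ inside $[u,\p(n)^c]$ I would exhibit preimages under $\operatorname{ad}_u$: for a matched index $i\le\min(r,s)$ one checks $[u,e_{-\epsilon_{2i-1}-\epsilon_{2i}}]=h_i$, and for an unmatched index the same bracket (or $\tfrac{1}{c_i}[u,e_{\epsilon_{2i-1}+\epsilon_{2i}}]$) again yields $h_i$; in each case the root vector used lies in $\g^c$, since its weight is annihilated by $c=u^2$.

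Next I would compute $\c(\ft')$ by splitting $\C^n$ into the joint eigenspaces of $h_1,\dots,h_t$. The $2t$ paired coordinates give distinct eigenlines and the remaining $n-2t$ coordinates the zero eigenspace, so the even centralizer is $\g\l(1)^{2t}\times\g\l(n-2t)$; on the odd side the roots with $\beta(h_i)=0$ for all $i$ are precisely those supported on the free coordinates together with the pairs $\pm(\epsilon_{2i-1}+\epsilon_{2i})$. Assembling these I would obtain
\[
\c(\ft')\cong\p(n-2t)\times\g\l(1|1)^{t},
\]
where the $i$-th $\g\l(1|1)$ has center $\C h_i$ and odd part $\langle e_{\epsilon_{2i-1}+\epsilon_{2i}},e_{-\epsilon_{2i-1}-\epsilon_{2i}}\rangle$, and the first factor is the periplectic subalgebra on the coordinates $2t+1,\dots,n$. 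Maximality of $\ft'$ then follows from the same weight bookkeeping: the weight-zero part of $[u,\p(n)^c]$ is spanned by the coroots $h_i$, the summand $e_{2\epsilon_n}$ contributing nothing because $-2\epsilon_n$ is not a root of $\p(n)$ (the antisymmetric block $C$ has zero diagonal).

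Now I would apply $DS_u$, using $\g_u\cong\c(\ft')_u$ (\cref{quotient_ds_is_larger}) and $\g(u,\ft')=(\c(\ft')/\ft')_u$. Writing $u=u'+\sum_i u_i''$ with $u'=d\,e_{2\epsilon_n}$ the free component, a short cohomology computation shows that each $\g\l(1|1)_{u_i''}$ vanishes---whether $i$ is matched, where $[u_i'',u_i'']=2c_ih_i$ is central, or unmatched, where $u_i''$ is a single square-zero root vector---while each $\p\g\l(1|1)_{u_i''}$ is one-dimensional and odd. Hence $\g_u\cong\p(n-2t)_{u'}$, and the abelian part contributed by the $t$ pairs is $\C^{0|t}$. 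For $d=0$ the free factor is untouched, and for $d=1$ it remains to compute $\p(m)_{e_{2\epsilon_m}}$; granting $\p(m)_{e_{2\epsilon_m}}\cong\p(m-1)$, everything assembles to $\g(u,\ft')\cong\p(n-2t-d)\times\C^{0|t}$ (each of the $t$ pairs consuming the two indices $2i-1,2i$, and the summand $e_{2\epsilon_n}$ removing one more).

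The main obstacle is exactly this last reduction $\p(m)_{e_{2\epsilon_m}}\cong\p(m-1)$. In contrast to the $\g\l$ and $\q$ computations, $2\epsilon_m$ is an isotropic \emph{diagonal} odd root with no negative partner, so the reduction is a genuinely periplectic phenomenon with no analogue in \cref{thm construction 1} or \cref{superalgebra q case}. I would prove it by writing $\operatorname{ad}_{e_{2\epsilon_m}}$ blockwise on $\begin{bmatrix}A&B\\C&-A^t\end{bmatrix}$ and computing its cohomology directly, matching the surviving classes with the periplectic algebra on the first $m-1$ coordinates; the invariance of superdimension under $DS$ ($\operatorname{sdim}\p(m)=\operatorname{sdim}\p(m-1)=0$) provides a consistency check.
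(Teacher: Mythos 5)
Your strategy is exactly the one the paper leaves implicit: this proposition is stated without proof, and the intended argument is clearly the same one-line computation as in \cref{thm construction 1} and \cref{superalgebra q case}, namely compute $\c(\ft')$, recognize it as $\p(n-2t)\times\g\l(1|1)^{t}$ (the $i$-th factor centered at $\C h_i$), quotient by $\ft'$, and apply $DS_u$ factor by factor. Your intermediate steps check out: $h_i=E_{2i,2i}-E_{2i-1,2i-1}$, the even centralizer $\g\l(1)^{2t}\times\g\l(n-2t)$, the odd-root bookkeeping, the vanishing of $\g\l(1|1)_{u_i''}$ in both the matched case (where $[u_i'',u_i'']=2c_iI\neq0$ but kernel equals image) and the unmatched case, the single odd dimension from each $\p\g\l(1|1)_{u_i''}$, and the reduction $\p(m)_{e_{2\epsilon_m}}\cong\p(m-1)$, which does hold: $\operatorname{ad}e_{2\epsilon_m}$ wipes out the $m$-th row and column, leaving the periplectic algebra on the first $m-1$ coordinates.

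There is, however, one point you cannot pass over silently: your final formula $\p(n-2t-d)\times\C^{0|t}$ is \emph{not} the formula in the statement, which reads $\p(n-t-d)\times\C^{0|t}$; these disagree whenever $t\geq 1$. Your formula is the correct one, i.e.\ the statement as printed contains an error (presumably a typo, $n-t-d$ for $n-2t-d$). A minimal test: $n=2$, $u=e_{\epsilon_1+\epsilon_2}$, so $r=t=1$, $s=d=0$. Then $\ft'=\C h_1$, $\c(\ft')\cong\g\l(1|1)$, $\c(\ft')/\ft'\cong\p\g\l(1|1)$, and $(\p\g\l(1|1))_{\bar u}\cong\C^{0|1}$, so $\g(u,\ft')\cong\p(0)\times\C^{0|1}$ is $(0|1)$-dimensional, whereas the printed formula would give $\p(1)\times\C^{0|1}$ of dimension $(1|2)$. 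The corrected exponent is also what consistency with \cref{superalgebra q case} demands, where each pair consumes two indices and each singleton one, giving $\q(n-2r-k)$. A finished write-up must flag this discrepancy explicitly rather than derive a different formula without comment. Two smaller gaps: for an unmatched index with $r<i\leq s$ the bracket $[u,e_{-\epsilon_{2i-1}-\epsilon_{2i}}]$ is zero, so only your parenthetical alternative $\tfrac{1}{c_i}[u,e_{\epsilon_{2i-1}+\epsilon_{2i}}]$ produces $h_i$ there; and your maximality argument is incomplete as stated, since weight-zero bookkeeping with respect to $\h$ is not enough when $d=1$ --- the $\ft'$-weight-zero part of $[u,\g^c]$ also contains the even elements $[e_{2\epsilon_n},\g_{\ol{1}}]\cap\c(\ft')$, which are supported on the $n$-th row; one must observe these are nilpotent and commute with $\ft'$, so by Jordan decomposition they contribute no new toral directions.
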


\begin{remark}\label{non_faithful_rmk}
We remind that Lemma \ref{W action} shows that $\g(u,\k)$ need not act faithfully on $DS_u$, and thus neither do the Lie superalgebras constructed in Theorem \ref{thm construction 1} or Propositions \ref{superalgebra q case} and \ref{superalgebra p case}.  In some cases this construction is too simple to produce many new symmetries of $DS_u$, because it is entirely linear.  In the following sections we will describe constructions that are less linear in nature, and which produce symmetries that are significantly more complex to describe.  
\end{remark}

\section{Second approach: natural supergroups which act by symmetries}

\subsection{The supergroup $\widetilde{G_u}$} Let $G$ be an affine algebraic supergroup over $\C$, and write $\g=\operatorname{Lie}G$ for its Lie superalgebra.  Write $\C[G]$ for the the Hopf superalgebra of functions on $G$ (see Chpt. 11 of \cite{carmeli2011mathematical} for definitions), and let $\Delta$ the coproduct map on $\C[G]$.  Then $G$ acts on itself by conjugation, and inducing an action of $\g$ on $G$ by vector fields.  For $u_e\in T_eG$, the vector field we get is
\[
u_{ad}=u_R+u_L=(1\otimes u_e-u_e\otimes 1)\circ\Delta.
\] 
where $u_R=(1\otimes u_e)\circ\Delta$ and $u_L=-(u_e\otimes 1)\circ\Delta$ are the infinitesimal right and left translations in the $u_e$ direction.  We refer to $u_{ad}$ as the adjoint vector field of $u=u_e$.

This adjoint action of $\g$ on $\C[G]$ respects the Hopf superalgebra structure, i.e. all morphisms coming from the Hopf superalgebra structure are morphisms of $\g$-modules.  Let $u\in\g_{\ol{1}}$ be such that $[u,u]=c$ has a semisimple adjoint vector field.  Then since $DS_u$ is a tensor functor, $\C[G]_u=DS_{u_{ad}}\C[G]$ is a supercommutative Hopf superalgebra.  
\begin{definition}
	We write $\widetilde{G_u}$ for the algebraic supergroup with $\C[\widetilde{G_u}]=\C[G]_u$, and $\widetilde{\g_u}$ for its Lie superalgebra.
\end{definition}

If $V$ is a $G$-module, then we have morphism $V\to V\otimes\C[G]$, so when we take $DS_u$ we obtain a morphism $V_u\to V_u\otimes \C[G]_u=V_u\otimes \C[\widetilde{G_u}]$, which gives $V_u$ the natural structure of a $\widetilde{G_u}$-module; in this way $\widetilde{G_u}$ acts on the DS functor. 

\begin{remark}
	The supergroup $\widetilde{G_u}$ need not act faithfully on $DS_u$; an easy counterexample is as follows.  Let $G=\G_{a}^{0|1}$ denote the abelian, purely odd Lie supergroup, and let $u\in\g_{\ol{1}}$ be a non-zero element.  Then $u_{ad}=0$, so $\widetilde{G_u}=\G_{a}^{0|1}$; on the other hand $\ul{Aut}^{\otimes}(DS_u)$ is the trivial Lie supergroup.
\end{remark}

The following theorem lists the main results of this section.  For the definition of rank in the $GL(m|n)$ and $Q(n)$ cases, see Section \ref{sec_classical}.  We use the notation $\G_a^{0|n}$ for the connected Lie supergroup with Lie superalgebra $\C^{0|n}$.
\begin{thm}\label{main_thm_supergroups} 
	
	\begin{enumerate}
		\item If $G=GL(m|n)$ and $u\in\g_{\ol{1}}$ with $u^2=0$ of rank $s$, then we have
		\[
		\widetilde{G_u}=GL(m-s|n-s)\times\G_{a}^{0|r}.
		\]
		\item For $G=GL(1|1)$ with $u\in\g_{\ol{1}}^{hom}$ arbitrary and $u\neq0$, we have
		\[
		\widetilde{G_u}=\G_{a}^{0|1}.
		\]
		\item If $G=Q(n)$ and $u\in\g_{\ol{1}}$ with $u^2=0$ of rank $r$, then we have
		\[
		\widetilde{G_u}=Q(n-2r)\times\G_a^{0|r}.
		\]
		\item If $G=Q(n)$ and $u=T_{0,I_{n}}$, then we have
		\[
		\widetilde{G_u}=\G_a^{0|n}.
		\]
		\item If $G$ is split, i.e. $\g_{\ol{1}}$ is an odd abelian ideal of $\g$, then for any $u\in\g_{\ol{1}}$ we have that $\widetilde{G_u}$ is the supergroup with $(\widetilde{G_u})_0=C(u)_0$ and $\widetilde{\g_u}=\c(u)/[u,\g_{\ol{0}}]$, where $C(u)$ is the centralizer of $u$ in $G$, and $\c(u)$ is its Lie superalgebra.
		
	\end{enumerate}
\end{thm}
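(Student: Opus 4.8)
$$\textbf{Proof proposal.}$$

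The plan is to prove the five cases of \cref{main_thm_supergroups} by directly computing $DS_u$ on the Hopf superalgebra $\C[G]$ with respect to the adjoint vector field $u_{ad}$, and then recognizing the resulting Hopf superalgebra. The key observation is that $\widetilde{G_u}$ is determined as a supergroup by two pieces of data: its underlying even group $(\widetilde{G_u})_0$, and its Lie superalgebra $\widetilde{\g_u}$. Since $DS_{u_{ad}}$ is a tensor functor, the even part of $\C[G]_u$ is computed by applying $DS$ to the even Hopf subalgebra, while the Lie superalgebra $\widetilde{\g_u}$ is recovered as the $DS$-cohomology of the (dual) of the Lie algebra data, i.e.\ of the primitive part. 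So for each $G$ I would compute these two invariants and assemble them.

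For the general statement (5), when $G$ is split so that $\g_{\ol 1}$ is an odd abelian ideal, the coordinate ring factors nicely: $\C[G]\cong\C[G_0]\otimes S\g_{\ol 1}^*$ with $G_0=G_{\ol 0}$ acting, and $u_{ad}$ acts on $S\g_{\ol 1}^*$ as contraction/translation by $u$ twisted by the $G_0$-action on the even part. First I would write $u_{ad}=u_R+u_L$ explicitly and observe that on the split coordinate ring the odd translation becomes a Koszul-type differential on the exterior algebra $S\g_{\ol 1}^*=\Lambda\g_{\ol 1}^*$; its cohomology is then $\Lambda(\ker/\im)$, giving $\widetilde{\g_u}_{\ol 1}=(\c(u)/[u,\g_{\ol 0}])_{\ol 1}$ after accounting for the left-translation contribution. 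The even group $(\widetilde{G_u})_0$ should come out to be exactly the stabilizer $C(u)_0$, since the even functions surviving cohomology are precisely those constant along the $u$-direction, i.e.\ invariant under the relevant one-parameter odd flow. This general case is essentially a bookkeeping computation once the Koszul structure is identified.

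For cases (1)--(4), the strategy is to reduce to an explicit normal form for $u$ (available from \cref{lemma_form} and the $\q(n)$ lemma) and compute $DS_{u_{ad}}\C[G]$ by hand. Here I expect to use the matrix coordinates on $GL(m|n)$ and $Q(n)$: the conjugation action lets one organize $\C[G]$ into weight spaces for $\mathrm{ad}\,c$, take $c$-invariants, and then run the $u$-differential block by block. For $GL(m|n)$ with $u$ of rank $s$, the rank-one pieces should each contribute a $GL(1|1)$-type calculation whose $DS$ yields one odd $\G_a$-factor, while the complementary block gives the $GL(m-s|n-s)$ factor; the product structure $GL(m-s|n-s)\times\G_a^{0|r}$ then follows from the tensor (hence Hopf) structure of $DS_u$. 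The $Q(n)$ and $GL(1|1)$ cases run analogously, with (4) being the maximal-rank degenerate case where the even group collapses entirely and only the odd abelian factor $\G_a^{0|n}$ survives.

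The main obstacle I anticipate is the explicit computation of the $u$-cohomology on the full coordinate ring in the non-split matrix cases (1) and (3): one must verify that the surviving generators really do close up into a Hopf superalgebra isomorphic to the claimed product, rather than merely computing the right superdimension or the right Lie superalgebra. In particular, checking that the coproduct on $\C[G]_u$ splits as a tensor product of coproducts — so that the group genuinely decomposes as a direct product and not merely as an extension — is the delicate point. I would handle this by identifying explicit group-like and primitive elements surviving in $\C[G]_u$, showing they generate, and exhibiting the comultiplication on these generators directly; the rank-one reduction (treating each isotropic pair independently, as in \cref{thm construction 1}) is what should make this tractable.
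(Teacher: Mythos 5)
Your overall frame (compute $\C[G]_u$ as a Hopf superalgebra and read off the supergroup) is the paper's frame, and your sketch of case (5) is close in spirit to the paper's argument, but the proposal has a genuine gap in cases (1)--(4). The reduction of a rank-$s$ element to ``independent rank-one $GL(1|1)$-type computations'' is not justified by the tensor property of $DS_u$: that property applies to tensor products of $\g$-modules, and $\C[GL(m|n)]$ is \emph{not} a tensor product of modules attached to the diagonal blocks. The block subgroup $H=GL(1|1)^{\times s}\times GL(m-s|n-s)$ is a proper subgroup, and the assertion that restriction induces an isomorphism $\C[G]_u\to\C[H]_u$ is a nontrivial localization-type statement --- it is exactly the content that must be proven (it is a special case of the later results of \cite{localization}), not a formal consequence of functoriality or of the Hopf structure. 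The paper replaces this with an explicit computation: in matrix coordinates it splits $u_{ad}=D_1+D_2$ into two supercommuting odd vector fields, introduces a $\Z\times\Z$-grading making this a double complex, observes that $D_1$ is a Koszul differential on a regular sequence (so the spectral sequence collapses after the first page), and identifies the induced action of $D_2$ on the $D_1$-cohomology as a \emph{relative de Rham differential} over $\C[GL(m-s|n-s)]$. The odd factor thus arises as $H_{dR}^*(GL(r)\times GL(s-r))$ (and as $H^*_{dR}(GL(n))$ in case (4)), whose generators sit in degrees $1,3,5,\dots$; they are not contributions of separate isotropic pairs, so the ``one odd generator per rank-one piece'' picture does not describe how the cohomology actually arises --- the final answers agree only because all the resulting odd supergroups are abelian. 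Your plan offers no mechanism that replaces this computation.

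Two further points. First, your opening reduction is faulty: $u_{ad}$ is an odd derivation, so it does not preserve any ``even Hopf subalgebra'' (indeed $\C[G]_{\ol{0}}$ is not even a subcoalgebra, cf.\ $\Delta(x)=x\otimes x+\xi\otimes\eta$ in $GL(1|1)$), and $\widetilde{\g_u}$ is the dual of $\m_e/\m_e^2$ of the cohomology ring, which does not commute with taking cohomology in any evident way; the paper only constructs a comparison map $\g_u\to\widetilde{\g_u}$ and gives an injectivity criterion (\cref{inj sufficient}), never computing $\widetilde{\g_u}$ ``from primitives.'' (Also a supergroup is determined by $(\widetilde{G_u})_0$ and $\widetilde{\g_u}$ only together with the action, i.e.\ as a Harish-Chandra pair.) Second, in case (5) your formula ``cohomology $=\Lambda(\ker/\operatorname{im})$'' hides the key input: the Koszul section here is not regular, since the codimension of $C(u)_0$ in $G_0$ is in general smaller than $\dim\g_{\ol{1}}$, so one needs smoothness of $C(u)_0\sub G_0$ and a statement such as Prop.~1.28 of \cite{sanna2014rational} to conclude that the cohomology is $\bigwedge\EE$ for the excess bundle $\EE$; the paper then computes the fiber $\EE|_e\cong(\g_{\ol{1}}/[u,\g_{\ol{0}}])^*$ and uses $C(u)_0$-equivariance to identify the module structure. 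Without these inputs your case (5) is a heuristic rather than a proof.
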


\begin{remark}
	We will see that in cases (1), (3), and (4), the odd abelian part $\G_a^{0|r}$ in $\widetilde{G_u}$ is naturally arising from the de Rham cohomology of a reductive algebraic group (and for us it will always be $GL(n)$.)
\end{remark}

\subsection{Relation to $\g_u$} Before beginning to prove Theorem \ref{main_thm_supergroups}, we briefly discuss a relationship between $\g_u$ and the Lie superalgebra $\widetilde{\g_u}=\operatorname{Lie}\widetilde{G_u}$.  

There is a natural map of Lie superalgebras $\g_u\to\widetilde{\g_u}$ defined as follows: since the adjoint vector field of $u_{ad}$ vanishes at $e$, $u_{ad}$ acts on $T_eG=\g$, and the action is exactly the adjoint action $\operatorname{ad}(u)$.  There is a natural well-defined map
\[
(T_eG)_u=\g_u\to T_e\widetilde{G_u}.
\]
Given a tangent vector $D$ one obtains a left-invariant vector field via the formula:
\[
(1\otimes D)\circ\Delta.
\]
Since the coproduct on $\C[G]_u$ is induced from the coproduct on $\C[G]$, our map of tangent spaces induces a map of Lie superalgebras $\g_u\to \operatorname{Lie}(\widetilde{G_u})$.

\begin{lemma}\label{inj sufficient}
	Suppose that there exists a faithful finite-dimensional $G$-module $V$ such that under the adjoint action of $\g$ we have $\End(V)=\g\oplus W$; then the map $\g_u\to\operatorname{Lie}(\widetilde{G_u})$ is injective.
\end{lemma}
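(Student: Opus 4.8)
The plan is to factor the natural map $\g_u\to\widetilde{\g_u}$ through its action on the single faithful module $V_u$, and to use the hypothesis that $\g$ is a direct summand of $\End(V)$ to detect the kernel there. Since $V_u$ is a $\widetilde{G_u}$-module, differentiating gives a map $\widetilde{\g_u}\to\End(V_u)$, and I would show that the composite
\[
\g_u\longrightarrow\widetilde{\g_u}\longrightarrow\End(V_u)
\]
coincides with the map obtained by applying $DS_u$ to the representation $\pi\colon\g\to\End(V)$, under the canonical identification $\End(V)_u\cong\End(V_u)$. Granting this, injectivity of $\g_u\to\widetilde{\g_u}$ follows at once from injectivity of the composite, which is where the hypothesis enters.

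For the injectivity of the composite, note that $\pi\colon\g\to\End(V)$ is a morphism of $\g$-modules for the adjoint actions on both sides, so applying $DS_u$ yields $\pi_u\colon\g_u\to\End(V)_u$. By property (1) of $DS_u$ (rigidity and monoidality) together with $\End(V)=V\otimes V^*$, there is a canonical identification $\End(V)_u\cong V_u\otimes V_u^*=\End(V_u)$. The hypothesis $\End(V)=\g\oplus W$ as $\g$-modules, combined with additivity of $DS_u$, gives $\End(V)_u=\g_u\oplus W_u$; since $V$ is faithful the hypothesis identifies $\pi$ with the inclusion of the summand $\g$, so $\pi_u$ is exactly the inclusion of the first summand and is therefore injective.

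The remaining — and main — task is the identification of the composite $\g_u\to\widetilde{\g_u}\to\End(V_u)$ with $\pi_u$. Recall that for a $G$-module $V$ with coaction $\rho\colon V\to V\otimes\C[G]$, the infinitesimal action of a tangent vector $D\in T_eG=\g$ is $(\id_V\otimes D)\circ\rho$, and the analogous formula computes the $\widetilde{\g_u}$-action on $V_u$ from the coaction $\rho_u=DS_u(\rho)$. Since $\rho$ is a morphism in $\CC$ (equivariant for $u_{ad}$ on $\C[G]$) and $DS_u$ is monoidal, applying $DS_u$ to $(\id_V\otimes D)\circ\rho$ should reproduce $(\id_{V_u}\otimes\bar D)\circ\rho_u$, where $\bar D\in\widetilde{\g_u}$ is the image of $[D]$; this is precisely the assertion that the triangle commutes.

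The subtlety I expect to be the genuine obstacle is that a tangent vector $D\colon\C[G]\to\C$ is not itself a morphism in $\CC$, so the desired equality cannot be read off as a bare application of the functor. Instead I would run the argument through the cotangent space $\m_e/\m_e^2$, which is an honest object of $\CC$, check that the map $\g_u\to\widetilde{\g_u}$ is exactly the one induced by $DS_u$ on cotangent spaces, and verify that the evaluation pairing of $\g$ against $\C[G]$ is compatible with $DS_u$ after dualizing and pairing against $\rho_u$. Once this compatibility of the pairing with $DS_u$ is established the two displayed maps agree, and the injectivity of $\g_u\to\widetilde{\g_u}$ follows.
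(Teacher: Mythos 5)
Your proposal is correct, but it takes a genuinely different route from the paper's proof. The paper works dually, on the function side: it notes that $\m_e/\m_e^2\cong\g^*$ and that pulling back linear functionals along $G\to\End(V)$ gives a $\g$-equivariant splitting $\g^*\to\m_e\sub\C[G]$ (this is where $\End(V)=\g\oplus W$ and faithfulness enter); injectivity of $\g_u\to\widetilde{\g_u}$ then follows by pairing a nonzero class in $\g_u$ against a suitable class of $(\g^*)_u$ realized inside $\C[G]_u$, using that $DS_u$ commutes with duals. You stay on the representation side: you factor $\g_u\to\widetilde{\g_u}\to\End(V_u)$ and identify the composite with $DS_u(\pi)$, which is injective because $\pi$ is a split injection of $\g$-modules and $DS_u$ preserves direct sums. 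The underlying mechanism is the same in both arguments --- the hypothesis produces a split equivariant injection that survives $DS_u$ --- but the detection spaces differ ($\End(V_u)$ for you, $\C[G]_u$ via the cotangent pairing for the paper), and so does the technical burden. Yours is the commutativity of the triangle; the subtlety you flag (a tangent vector is not a morphism in $\CC$) is real, but your fix goes through, and more directly than the cotangent-space detour you sketch: a $u$-closed tangent vector $D$ kills $u_{ad}(\C[G])$ outright, since $u_{ad}$ vanishes at $e$ and hence maps $\C[G]$ into $\m_e$, so $D$ is a chain map $(\C[G],u_{ad})\to(\C,0)$; then $\id_V\otimes D$ is a chain map inducing $\id_{V_u}\otimes\bar{D}$ under the K\"unneth isomorphism, and the triangle commutes by applying $DS_u$ to the identity $\pi(D)=(\id_V\otimes D)\circ\rho$. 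The paper's burden, the pairing argument, is left implicit but is shorter. Two trade-offs are worth recording: your proof applies $DS_u$ to $V$ and $\End(V)$, so it implicitly requires $c=[u,u]$ to act semisimply on $V$, a condition the paper's proof never invokes since it only applies $DS_u$ to $\C[G]$ and $\g$; in exchange, your argument establishes a fact of independent interest that the paper does not state, namely that the $\widetilde{G_u}$-module structure on $V_u$ restricts along $\g_u\to\widetilde{\g_u}$ to the canonical $\g_u$-module structure obtained by applying $DS_u$ to the $\g$-action on $V$.
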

\begin{proof}
	Write $\m_e$ for the maximal ideal of $e$; in every case we have an isomorphism $\m_e/\m_e^2\cong\g^*$ under the adjoint action.  We want to show that $\g^*$ splits off $\m_e$, and thus off $\C[G]$.  Under our assumptions, the pullback map $\End(V)^*\to\C[G]$ provides our splitting $\g^*\to\m_e$.
\end{proof}

Note that the hypotheses of Lemma \ref{inj sufficient} are preserved under taking subalgebras which are the fixed points of a semisimple automorphism. The hypotheses also hold for any superalgebra for which there exists a representation $V$ such that the supertrace form induces a nondegenerate form on $\g$.  Using this, one can check that the above property holds for $\g\l(m|n)$, $\o\s\p(m|2n)$, $\p(n)$, and $\q(n)$.

\subsection{Warm-up: $GL(1|1)$}
We begin with $G=GL(1|1)$, proving (2) in Theorem \ref{main_thm_supergroups}.

Write $\C[G]=\C[x^{\pm1},y^{\pm1},\xi,\eta]$ with $x,y$ even and $\xi,\eta$ odd.  The comultiplication map is:
\[
\Delta(x)=x\otimes x+\xi\otimes\eta,\ \ \ \Delta(y)=y\otimes y+\eta\otimes\xi
\]
\[
\Delta(\xi)=x\otimes\xi+\xi\otimes y, \ \ \ \Delta(\eta)=\eta\otimes x+y\otimes \eta
\]
Write $\g=\g\l(1|1)=\begin{bmatrix}I+h&E\\F&I-h\end{bmatrix}$ for its lie superalgebra, viewed as the tangent space at the identity.
For a tangent vector $w\in T_eG$, we write
\[
w_R=(\id\otimes w)\circ\Delta, \ \ \ \ w_L=-(w\otimes\id)\circ\Delta
\]
for the vector fields on $G$ corresponding the right, resp. left infinitesimal translation on $G$.  Being an open subset of affine space, the tangent bundle has a global trivialization given by the sections $\d_x,\d_y,\d_{\xi},$ and $\d_{\eta}$.  We use these to give another natural basis of $T_eG$ given by the elements $D_a=ev_e\circ\d_a$, where $a$ is some coordinate and $ev_e$ is evaluation at the identity.  Then we have
\[
I_R:=(D_x+D_y)_R=x\d_x+y\d_y+\xi\d_\xi+\eta\d_\eta,\ \ \ h_R:=(D_x-D_y)_L=x\d_x-y\d_y-\xi\d_\xi+\eta\d_\eta
\] 
\[
E_R:=(-D_{\xi})_R=\eta\d_y-x\d_\xi,\ \ \ \ F_R:=(D_{\eta})_R=-\xi\d_x+y\d_\eta
\]
and
\[
I_L:=(D_x+D_y)_L=-(x\d_x+y\d_y+\xi\d_\xi+\eta\d_\eta),\ \ \ h_L:=(D_x-D_y)_L=-x\d_x+y\d_y-\xi\d_\xi+\eta\d_\eta
\] 
\[
E_L:=(-D_{\xi})_L=\eta\d_x+y\d_\xi,\ \ \ \ F_L:=(D_{\eta})_L=-\xi\d_y-x\d_\eta
\]
Consider the vector field
\[
u=E+\lambda F=(E_R+E_L)+\lambda(F_{R}+F_{L})=(\eta-\lambda\xi)(\d_x+\d_y)+(y-x)(\d_\xi+\lambda\d_{\eta})
\] 
The square of this vector field is a multiple of the adjoint vector field of $I$, which is 0 since $I$ is central.  One can now compute by a spectral sequence argument (or directly by hand) that its cohomology is $\C\langle1,x^{-1}(\eta-\lambda\xi)\rangle$.  We can also see this from the module structure.  We recall from Sec. 6.4 of \cite{sherman2021spherical} that as a $\g\times\g$-module we have
\[
\C[G]=M_{0}\oplus \bigoplus_{\lambda\ typical}L(\lambda)\boxtimes L(\lambda)^*
\]
where $M_{0}$ has the following module structure
\[
\xymatrix{&&&\bullet \ar[dll]_{E_L}\ar[drr]^{F_L} \ar@{-->}[dl]^{F_R} \ar@{-->}[dr]_{E_R}&&&\bullet \ar[dll]_{E_L}\ar[drr]^{F_L} \ar@{-->}[dl]^{F_R} \ar@{-->}[dr]_{E_R}&&&\bullet \ar[dll]_{E_L}\ar[drr]^{F_L} \ar@{-->}[dl]^{F_R} \ar@{-->}[dr]_{E_R}&&&\\ \hdots&\bullet \ar[drr]_{F_L} \ar@{-->}[dl]_{F_R} & \bullet \ar[dll]^{E_L} \ar@{-->}[dr]^{E_R} & & \bullet\ar[drr]_{F_L} \ar@{-->}[dl]_{F_R} & \bullet \ar[dll]^{E_L} \ar@{-->}[dr]^{E_R} & &\bullet \ar[drr]_{F_L} \ar@{-->}[dl]_{F_R} & \bullet \ar[dll]^{E_L} \ar@{-->}[dr]^{E_R} & & \bullet\ar[drr]_{F_L} \ar@{-->}[dl]_{F_R} & \bullet \ar[dll]^{E_L} \ar@{-->}[dr]^{E_R} & \hdots \\\bullet & && \bullet & &&\bullet & & &\bullet & && \bullet & &&\bullet}
\]
Since the modules $L(\lambda)\otimes L(\lambda)^*$ for $\lambda$ typical are projective, we get no cohomology here.  On $M_{0}$ one sees we have a one-dimensional subspace in cohomology in the middle row (giving an odd vector) and a one-dimensional subspace from the bottom row (and even vector).  Indeed, any dot on the bottom row is equivalent in cohomology, but non-trivial.  In the middle row, one see that the dots come in pairs in our diagram; a diagonal subspace of these dots will be in the kernel of $E+{\lambda}F$, and will define a nontrivial class in cohomology.

\begin{remark}
	The case of $G=SL(1|1)$ can also be computed explicitly, and for a nonzero $u$ with $[u,u]=0$ one obtains $\widetilde{G_u}=\G_{a}^{0|2}$.
\end{remark}

\subsubsection{The Lie superalgebra}  Let us check that for $\g=\g\l(1|1)$ and $u=E+\lambda F$ as above, we have that $\operatorname{Lie}\widetilde{GL(1|1)_u}=\g(u,\C\langle I\rangle)=\C\langle F\rangle$.  Indeed, the action of $F$ by left or right infinitesimal translation on functions on $\widetilde{GL(1|1)_u}$ takes the odd generator $x^{-1}(\eta-\lambda\xi)$ to a non-zero multiple of 1, and annihilates 1.  

\subsection{The case of $GL(m|n)$} In this subsection we compute $\widetilde{GL(m|n)_u}$ for all $u$ with $[u,u]=0$.

Assume $1\leq m\leq n$.  Present the coordinate ring of $G=GL(m|n)$ as $\C[G]=\C[x_{ij},y_{ij},\det(x_{ij})^{-1},\det(y_{ij})^{-1},\xi_{ij},\eta_{ij}]$.  The coalgebra structure is given by
\[
\Delta(x_{ij})=\sum\limits_{k}x_{ik}\otimes x_{kj}+\sum\limits_{k}\xi_{ik}\otimes\eta_{kj}
\]
\[
\Delta(y_{ij})=\sum\limits_{k}y_{ik}\otimes y_{kj}+\sum\limits_{k}\eta_{ik}\otimes\xi_{kj}
\]
\[
\Delta(\xi_{ij})=\sum\limits_{k}x_{ik}\otimes\xi_{kj}+\sum\limits_{k}\xi_{ik}\otimes y_{kj}
\]
\[
\Delta(\eta_{ij})=\sum\limits_{k}\eta_{ik}\otimes x_{kj}+\sum\limits_{k}y_{ik}\otimes\eta_{kj}
\]
We compute that
\begin{eqnarray*}
	E_{\ell}& := &(D_{\xi_{\ell\ell}}\otimes 1-1\otimes D_{\xi_{\ell\ell}})\circ\Delta\\
	& =  &\sum\limits_{j=1}^{m}\eta_{\ell j}\d_{x_{\ell j}}+\sum\limits_{j=1}^{n}y_{\ell j}\d_{\xi_{\ell j}}+\sum\limits_{i=1}^{n}\eta_{i\ell}\d_{y_{i\ell}}-\sum\limits_{i=1}^{m}x_{i\ell}\d_{\xi_{i\ell}}
\end{eqnarray*}

\begin{eqnarray*}
	F_{\ell}& := &(D_{\eta_{\ell\ell}}\otimes 1-1\otimes D_{\eta_{\ell\ell}})\circ\Delta\\
	& =  &\sum\limits_{i=1}^{m}\xi_{i\ell}\d_{x_{i\ell}}-\sum\limits_{i=1}^{n}y_{i\ell}\d_{\eta_{i\ell}}+\sum\limits_{j=1}^{n}\xi_{\ell j}\d_{y_{\ell j}}+\sum\limits_{j=1}^{m}x_{\ell j}\d_{\eta_{\ell j}}
\end{eqnarray*}

Consider the vector field:
\[
u=E_1+\dots+E_r+F_{r+1}+\dots+F_s=D_1+D_2
\]
where $0\leq r\leq s\leq m$, and
\[
D_1=\sum\limits_{\ell=1}^{r}\left(\sum\limits_{j=1}^{n}y_{\ell j}\d_{\xi_{\ell j}}-\sum\limits_{i=1}^{m}x_{i\ell}\d_{\xi_{i\ell}}\right)+\sum\limits_{\ell=r+1}^{s}\left(\sum\limits_{j=1}^{m}x_{\ell j}\d_{\eta_{\ell j}}-\sum\limits_{i=1}^{n}y_{i\ell}\d_{\eta_{i\ell}}\right),
\]
and
\[
D_2=\sum\limits_{\ell=1}^{r}\left(\sum\limits_{j=1}^{m}\eta_{\ell j}\d_{x_{\ell j}}+\sum\limits_{i=1}^{n}\eta_{i\ell}\d_{y_{i\ell}}\right)+\sum\limits_{\ell=r+1}^{s}\left(\sum\limits_{i=1}^{m}\xi_{i\ell}\d_{x_{i\ell}}+\sum\limits_{j=1}^{n}\xi_{\ell j}\d_{y_{\ell j}}\right),
\]
Note then every element $v\in\g_{\ol{1}}$ with $[v,v]=0$ is conjugate to one of the above form.  We set, for $1\leq q\leq r\leq p\leq s$
\[
\alpha_{pq}=\xi_{pq}+\eta_{pq}, \ \ \ \ \beta_{qp}=\xi_{qp}+\eta_{qp}
\]
\[
\varphi_{pq}=\frac{1}{2}(\eta_{pq}-\xi_{pq}), \ \ \ \ \psi_{qp}=\frac{1}{2}(\xi_{qp}-\eta_{qp})
\]
Then we may write $D_1$ as:
\begin{eqnarray*}
	D_1& = &\sum\limits_{1\leq\ell,k\leq r}(y_{\ell k}-x_{\ell k})\d_{\xi_{\ell k}}+\sum\limits_{r+1\leq \ell,k\leq s}(y_{\ell k}-x_{\ell k})\d_{\eta_{\ell k}}\\
	& + &\sum\limits_{1\leq q\leq r<p\leq s}\left[x_{pq}\d_{\varphi_{pq}}+y_{qp}\d_{\psi_{qp}}\right]\\
	& + &\sum\limits_{\ell=1}^{r}\left(\sum\limits_{j=s+1}^{n}y_{\ell j}\d_{\xi_{\ell j}}-\sum\limits_{i=s+1}^{m}x_{i\ell}\d_{\xi_{i\ell}}\right)\\
	& + &\sum\limits_{\ell=r+1}^{s}\left(\sum\limits_{j=s+1}^{m}x_{\ell j}\d_{\eta_{\ell j}}-\sum\limits_{i=s+1}^{n}y_{i\ell}\d_{\eta_{i\ell}}\right)
\end{eqnarray*}
And now $D_2$:
\begin{eqnarray*}
	D_2& = &\sum\limits_{1\leq\ell,k\leq r}\eta_{\ell k}(\d_{x_{\ell k}}+\d_{y_{\ell k}})+\sum\limits_{r+1\leq \ell,k\leq s}\xi_{\ell k}(\d_{x_{\ell k}}+\d_{y_{\ell k}})\\
	& + &\sum\limits_{1\leq q\leq r<p\leq s}\left[\beta_{qp}\d_{x_{qp}}+\alpha_{pq}\d_{y_{pq}}\right]\\
	& + &\sum\limits_{\ell=1}^{r}\left(\sum\limits_{j=s+1}^{m}\eta_{\ell j}\d_{x_{\ell j}}+\sum\limits_{i=s+1}^{n}\eta_{i\ell}\d_{y_{i\ell}}\right)\\
		& + &\sum\limits_{\ell=r+1}^{s}\left(\sum\limits_{i=s+1}^{m}\xi_{i\ell}\d_{x_{i\ell}}+\sum\limits_{j=s+1}^{n}\xi_{\ell j}\d_{y_{\ell j}}\right)\\
\end{eqnarray*}
We claim that $D_1$ and $D_2$ supercommute.  Indeed, the above terms are grouped according to their indices, and so one only needs to check the following supercommutators are zero:
\[
\left[\sum\limits_{1\leq\ell,k\leq r}(y_{\ell k}-x_{\ell k})\d_{\xi_{\ell k}}+\sum\limits_{r+1\leq \ell,k\leq s}(y_{\ell k}-x_{\ell k})\d_{\eta_{\ell k}},\sum\limits_{1\leq\ell,k\leq r}\eta_{\ell k}(\d_{x_{\ell k}}+\d_{y_{\ell k}})+\sum\limits_{r+1\leq \ell,k\leq s}\xi_{\ell k}(\d_{x_{\ell k}}+\d_{y_{\ell k}})\right]
\]
\[
\left[\sum\limits_{1\leq q\leq r<p\leq s}\left[x_{pq}\d_{\varphi_{pq}}+y_{qp}\d_{\psi_{qp}}\right],\sum\limits_{1\leq q\leq r<p\leq s}\left[\beta_{qp}\d_{x_{qp}}+\alpha_{pq}\d_{y_{pq}}\right]\right]
\]
\[
\left[\sum\limits_{\ell=1}^{r}\left(\sum\limits_{j=s+1}^{n}y_{\ell j}\d_{\xi_{\ell j}}-\sum\limits_{i=s+1}^{m}x_{i\ell}\d_{\xi_{i\ell}}\right),\sum\limits_{\ell=1}^{r}\left(\sum\limits_{j=s+1}^{m}\eta_{\ell j}\d_{x_{\ell j}}+\sum\limits_{i=s+1}^{n}\eta_{i\ell}\d_{y_{i\ell}}\right)\right]\]
\[
\left[\sum\limits_{\ell=r+1}^{s}\left(\sum\limits_{j=s+1}^{m}x_{\ell j}\d_{\eta_{\ell j}}-\sum\limits_{i=s+1}^{n}y_{i\ell}\d_{\eta_{i\ell}}\right),\sum\limits_{\ell=r+1}^{s}\left(\sum\limits_{i=s+1}^{m}\xi_{i\ell}\d_{x_{i\ell}}+\sum\limits_{j=s+1}^{n}\xi_{\ell j}\d_{y_{\ell j}}\right)\right]
\]
In all cases one sees the supercommutators are zero by direct inspection.

We now assign a $\Z\times\Z$-grading to $\C[G]$ so that $D_1$ will be an operator of degree $(1,0)$ and $D_2$ will be an operator of degree $(0,1)$, so we obtain a double complex on $\C[G]$, allowing us to use a spectral sequence to compute our cohomology.

We set $x_{ij},y_{ij}$ to have degree 0.  Now we will set all the odd coordinates in the above expression for $D_1$ to be of degree $(-1,0)$, the odd coordinates in $D_2$ will have degree $(0,1)$, and the remaining odd coordinates will have degree 0.  Explicitly, we declare that $\varphi_{pq},\psi_{qp}$ have degree $(-1,0)$; $\xi_{ij}$ has degree $(-1,0)$ if $1\leq i,j\leq r$, $1\leq i\leq r$ and $s+1\leq j\leq n$, or $s+1\leq i\leq m$ and $1\leq j\leq r$; and finally $\eta_{ij}$ has degree $(-1,0)$ if $r+1\leq i,j\leq s$, $r+1\leq i\leq s$ and $s+1\leq j\leq m$, or $s+1\leq i\leq n$ and $r+1\leq j\leq s$.

On the other hand we declare that $\alpha_{pq},\beta_{qp}$ have degree $(0,1)$; $\xi_{ij}$ has degree $(0,1)$ if $r+1\leq i,j\leq s$, $r+1\leq i\leq s$ and $s+1\leq j\leq n$, or $s+1\leq i\leq m$ and $r+1\leq j\leq s$; and $\eta_{ij}$ has degree $(0,1)$ if $1\leq i,j\leq r$, $1\leq i\leq r$ and $s+1\leq j\leq m$, or $s+1\leq i\leq n$ and $1\leq j\leq r$.

We finally declare to have degree 0 the remaining odd coordinates $\xi_{ij},\eta_{ij}$ which do not lie in the linear span of the coordinates for whom we have already given a degree.

Now we have obtained a double complex; we take cohomology with respect to $D_1$ first; since $D_1$ determines a Koszul complex on regular elements, its cohomology is concentrated in degree $(0,\ell)$, and so the cohomology of the first page will be the full cohomology we are looking for.  Now after taking cohomology with respect to $D_1$, we obtain the algebra with generators which we divide into three sets: the first is the coordinate algebra of a copy of $\C[GL(m-s|n-s)]$, which itself centralizes $u$:
\[
\{x_{ij}\}_{s+1\leq i,j\leq m}\cup\{y_{ij}\}_{s+1\leq i,j\leq n}\cup\{\xi_{ij}\}_{\substack{s+1\leq i\leq m,\\s+1\leq j\leq n}}\cup\{\eta_{ij}\}_{\substack{s+1\leq i\leq n,\\s+1\leq j\leq m}},
\]
and we have $\det(x_{ij})$ for $s+1\leq i,j\leq m$, $\det(y_{ij})$, for $s+1\leq i,j\leq n$ are invertible.  The next set of coordinates we will think of as giving the cotangent bundle on $GL(r)\times GL(s-r)$; here the even coordinates $z_{ij}$ are equal to the image of $x_{ij}$ and $y_{ij}$ after taking cohomology with respect to $D_1$:
\[
\{z_{ij},\xi_{ij}\}_{1\leq i,j\leq r}\cup \{z_{ij},\eta_{ij}\}_{r+1\leq i,j\leq s}
\] 
and we have $\det(z_{ij})$ for $1\leq i,j\leq r$ and $\det(z_{ij})$ for $r+1\leq i,j\leq s$ are invertible.  Note that this $GL(r)\times GL(s-r)$ is coming from a natural subalgebra $\g\l(r)\times\g\l(s-r)\sub[u,\g]$.

The remaining set of coordinates we will view as the cotangent bundle on an affine space:
\[
\{y_{pq},\alpha_{pq}\}_{1\leq q\leq r<p\leq s}\cup\{x_{qp},\beta_{qp}\}_{1\leq q\leq r<p\leq s}
\]
\[
\{x_{ij},\eta_{ij}\}_{\substack{1\leq i\leq r\\ s+1\leq j\leq m}}\cup \{y_{ij},\eta_{ij}\}_{\substack{s+1\leq i\leq n\\ 1\leq j\leq r}}
\]
\[
\{x_{ij},\xi_{ij}\}_{\substack{s+1\leq i\leq m\\ r+1\leq j\leq s}}\cup \{y_{ij},\xi_{ij}\}_{\substack{r+1\leq i\leq s\\ s+1\leq j\leq n}}.
\]
To help the reader, here is graphic of where we have which coordinates:
\[
\begin{bmatrix}z & x & x & 0 & \beta & 0 \\0 & z & 0 & \alpha & \xi & \xi \\0 & x & x & 0 & \xi & \xi\\ \eta & \beta & \eta & z & 0 & 0\\\alpha & 0 & 0& y & z & y\\ \eta & 0 & \eta& y & 0 & y\end{bmatrix}
\]
Now we need to take cohomology of this algebra with respect to the odd vector field $D_2$.  In our new algebra this operator looks like:
\begin{eqnarray*}
	D_2& = &\sum\limits_{1\leq\ell,k\leq r}\eta_{\ell k}\d_{x_{\ell k}}+\sum\limits_{r+1\leq \ell,k\leq s}\xi_{\ell k}\d_{x_{\ell k}}\\
	& + &\sum\limits_{1\leq q\leq r<p\leq s}(\beta_{qp}\d_{x_{qp}}+\alpha_{pq}\d_{y_{pq}})\\
	& + &\sum\limits_{\ell=1}^{r}\left(\sum\limits_{j=s+1}^{m}\eta_{\ell j}\d_{x_{\ell j}}+\sum\limits_{i=s+1}^{n}\eta_{i\ell}\d_{y_{i\ell}}\right)\\
	& + &\sum\limits_{\ell=r+1}^{s}\left(\sum\limits_{i=s+1}^{m}\xi_{i\ell}\d_{x_{i\ell}}+\sum\limits_{j=s+1}^{n}\xi_{\ell j}\d_{y_{\ell j}}\right)\\
\end{eqnarray*}  

Now the above differential is linear over our copy of $\C[GL(m-s|n-s)]$, which as we already described has coordinates $x_{ij},y_{ij},\xi_{ij},\eta_{ij}$ where $s+1\leq i\leq m$ and $s+1\leq j\leq n$.   In fact, if write $A$ for the coordinate algebra of the above coordinates, then we have a natural map $\Spec A\to GL(m-s|n-s)$, and $D_2$ is exactly the de Rham differential of $\Spec A$ over $GL(m-s|n-s)$.  Thus the cohomology is just the tensor product of the algebra of functions on $GL(m-s|n-s)$ with the algebraic de Rham cohomology of $GL(r)\times GL(s-r)\times \A$, where $\A$ is some affine space.  As an algebra we find therefore that the cohomology is
\[
\C[GL(m-s|n-s)]\otimes H_{dR}^*(GL(r)\times GL(s-r))
\]
We know that $H_{dR}^*(GL(r)\times GL(s-r))$ is a Grassmann algebra on $s$ generators.  From this, we can see the group structure will be:
\[
\widetilde{GL(m|n)_u}=GL(m-s|n-s)\times\G_{a}^{0|s}.
\]

\subsubsection{One full rank case}\label{full rank gl computation example}  To see the above computation in a simple case, we consider a special vector field for the case $m=n$.  Let
\[
u=E_{1}+\dots+E_{n}=\sum \eta_{ij}(\d_{x_{ij}}+\d_{y_{ij}})+\sum\limits (y_{ij}-x_{ij})\d_{\xi_{ij}}
\]
Using the spectral sequence argument again as above, one computes then in this case that the cohomology is nothing but the De Rham cohomology of $GL_n$ embedded diagonally, a Grassmann algebra on $n$ generators, giving a purely odd supergroup.  We will come back to this computation from a different perspective in the last section. 

\subsection{$Q(n)$ Case}  We now prove (3) of Theorem \ref{main_thm_supergroups}.  We present $\C[Q(n)]$ as $\C[x_{ij},\xi_{ij},\det(x_{ij})^{-1}]$.  We have
\[
\Delta(x_{ij})=\sum\limits_k(x_{ik}\otimes x_{kj}+\xi_{ik}\otimes\xi_{kj})
\]
\[
\Delta(\xi_{ij})=\sum\limits_k(\xi_{ik}\otimes x_{kj}+x_{ik}\otimes\xi_{kj}).
\]
Thus
\begin{eqnarray*}
	E_{i}& := &(D_{\xi_{i,i+1}}\otimes 1-1\otimes D_{\xi_{i,i+1}})\circ\Delta\\
	& = &\sum\limits_k(\xi_{i+1,k}\d_{x_{i,k}}+\xi_{k,i}\d_{x_{k,i+1}})+\sum\limits_{k}(x_{i+1,k}\d_{\xi_{i,k}}-x_{k,i}\d_{\xi_{k,i+1}})\\
	& = &\sum\limits_{k\neq i,i+1}(\xi_{i+1,k}\d_{x_{i,k}}+\xi_{k,i}\d_{x_{k,i+1}})+\sum\limits_{k\neq i,i+1}(x_{i+1,k}\d_{\xi_{i,k}}-x_{k,i}\d_{\xi_{k,i+1}})\\
	& + &(x_{i+1,i+1}-x_{ii})\d_{\xi_{i,i+1}}+x_{i+1,i}(\d_{\xi_{i,i}}-\d_{\xi_{i+1,i+1}})\\
	& + &(\xi_{i+1,i+1}+\xi_{i,i})\d_{x_{i,i+1}}+\xi_{i+1,i}(\d_{x_{i,i}}+\d_{x_{i+1,i+1}}).
\end{eqnarray*}
It follows that
\[
u=E_1+E_3+\dots+E_{2r-1}=D_1+D_2,
\]
where
\begin{eqnarray*}
	D_1& = &\sum\limits_{1\leq s,t\leq r}(x_{2s,2t}-x_{2s-1,2t-1})\d_{\xi_{2s-1,2t}}+\sum\limits_{1\leq s,t\leq r}x_{2s,2t-1}(\d_{\xi_{2s-1,2t-1}}-\d_{\xi_{2s,2t}})\\
	& + &\sum\limits_{1\leq s\leq r, k>2r}x_{2s,k}\d_{\xi_{2s-1,k}}-x_{k,2s-1}\d_{\xi_{k,2s}},
\end{eqnarray*}
and
\begin{eqnarray*}
	D_2& = &\sum\limits_{1\leq s,t\leq r}(\xi_{2s,2t}+\xi_{2s-1,2t-1})\d_{x_{2s-1,2t}}+\sum\limits_{1\leq s,t\leq r}\xi_{2s,2t-1}(\d_{x_{2s-1,2t-1}}+\d_{x_{2s,2t}})\\
	& + &\sum\limits_{1\leq s\leq r, k>2r}\xi_{2s,k}\d_{x_{2s-1,k}}+\xi_{k,2s-1}\d_{x_{k,2s}}
\end{eqnarray*}
and since $[D_1,D_1]=[D_2,D_2]=0$, we have $[D_1,D_2]=0$.  Note that every $v\in\q(n)_{\ol{1}}$ with $[v,v]=0$ is conjugate to an element of the above form. Working in the same way as for $GL(m|n)$ we can again create a double complex such that $D_1$ has degree (1,0) and $D_2$ has degree (0,1). The operator $D_1$ gives a Koszul complex; its cohomology will be
\[
\C[y_{st},\det(y_{st})^{-1},x_{2s-1,2t},x_{2s-1,k},x_{k,2s},\xi_{2s,k},\xi_{k,2s-1},\eta_{s,t},\xi_{2s,2t-1},x_{ij},\det(x_{ij})^{-1},\xi_{ij}]
\]
where $1\leq s,t\leq r$, and $2r<i,j,k\leq n$.  Here $y_{st}$ is the image of $x_{2s,2t}=x_{2s-1,2t-1}$, and $\eta_{s,t}$ is the image of $\xi_{2s,2t}=\xi_{2s-1,2t-1}$.  Under these coordinates, $D_2$ becomes:
\[
D_2=\sum\limits_{1\leq s,t\leq r}(\eta_{st}\d_{x_{2s-1,2t}}+\xi_{2s,2t-1}\d_{y_{st}})+\sum\limits_{k}\xi_{2s,k}\d_{x_{2s-1,k}}+\xi_{k,2s-1}\d_{x_{k,2s}}.
\]
This is defining a de Rham complex over $\C[Q(n-2r)]$, and its cohomology is given by
\[
H_{dR}(GL(r|r))\otimes\C[Q(n-2r)].
\]
The group structure is given by:
\[
\widetilde{Q(n)_u}=Q(n-2r)\times\G_{a}^{0|r}.
\]

\subsubsection{Another special case for $Q(n)$}\label{special q case}  We now show part (4) of Theorem \ref{main_thm_supergroups}.  Consider the element of $\q(n)$ given by
\[
u=\begin{bmatrix} 0 & I_n \\ I_n & 0 \end{bmatrix}
\]

Then $u^2$ is the identity matrix, which is central and thus the adjoint vector field of $u$ acts as a square-zero operator on $\C[Q(n)]$.  Further, $u$ is $G_0$-invariant, and thus its adjoint vector field everywhere vanishing.  Explicitly, it defines the vector field
\[
\sum\limits_{i,j}\xi_{ij}\d_{x_{ij}}
\]
and thus gives the de Rham complex on $GL(n)$.  Thus
\[
\C[Q(n)]_u=H_{dR}^\bullet(GL(n)).
\]
In particular, $\widetilde{Q(n)_u}=\G_a^{0|n}$.    

\begin{remark}
	Another, albeit less explicit, approach to this computation is explained in Prop.~5.9 of \cite{localization}, and works as follows: observe that for any point $g\in Q(n)_0$, $u_{ad}$ induces an isomorphism:
	\[
	[u,-]:(T_{g}G)_{\ol{1}}\to (T_gG)_{\ol{0}}
	\]
	Thus, by Lem.~5.7 of \cite{localization}, there is an isomorphism of $Q(n)$ with the cotangent bundle of $Q(n)_0=GL(n)$ such that $u$ becomes the de Rham differential.  Note that an analytic version of this identification of the cotagent bundle is given in \cite{vaintrob1996normal}.
\end{remark}

\subsection{Odd Abelian Case}
Let $G$ be a linear algebraic supergroup and assume that $\g_{\ol{1}}$ is abelian; such supergroups are called `split'.  We seek to show part (5) of Theorem \ref{main_thm_supergroups}.  First of all, any such supergroup may be presented as follows: 
\[
\C[G]=\C[G_0]\otimes S\g_{\ol{1}}^*
\]
where $\Delta(f)=\Delta_0(f)$ for $f\in\C[G_0]$, and the comultiplication on $\g_{\ol{1}}^*$ is as follows.  Write $a:\g_{\ol{1}}\to\C[G_0]\otimes\g_{\ol{1}}$ for the comodule structure morphism for the action of $G_0$ on $\g_{\ol{1}}$.  If $u_1,\dots,u_n$ is a basis of $\g_{\ol{1}}$, then we may write
\[
a(u_i)=\sum\limits_{j}f_{ij}\otimes u_j.
\] 
Write $\xi_1,\dots,\xi_n$ for the dual basis in $\g_{\ol{1}}^*$; then we have
\[
\Delta(\xi_i)=\xi_{i}\otimes 1+\sum\limits_{j}f_{ji}\otimes\xi_j.
\]
Now we may assume that $u=u_1$; then we have:
\[
u_{ad}=(f_{11}-1)\d_{\xi_1}+\sum\limits_{i\geq 2}f_{1i}\d_{\xi_i}.
\]
Thus $u_{ad}$ is defining a Koszul complex on $G$ for the elements $f_{11}-1,f_{21},\dots,f_{1n}$; further, these elements generate exactly the ideal of $C(u)_0$ in $G_0$.  Thus in particular this cohomology will be graded, meaning that the supergroup $\widetilde{G_u}$ will be again be split, i.e. $(\widetilde{\g_u})_{\ol{1}}$ will be abelian.  Further we will have $(\widetilde{G_u})_{0}=C(u)_0$.  Therefore it remains to determine the structure of $\widetilde{\g_u}$ as a $C(u)_0$-module.

However, by Prop. 1.28 of \cite{sanna2014rational}, because $G_0$ is smooth and $C(u)_{0}$ is smooth subvariety, this Koszul complex has cohomology given by $\bigwedge\EE$, where $\EE$ is the vector bundle on $C(u)_0$ defined by the following short exact sequence:
\[
0\to\EE\to \OO_{C(u)_0}\otimes\g_{\ol{1}}^*\to \NN_{C(u)_0/G_0}^\vee\to 0.
\]
As a matter of explanation, $\NN_{C(u)_0/G_0}^\vee=\II_{C(u_0)}/\II_{C(u_0)}^2$ is the conormal bundle of $C(u)_0$ in $G_0$, and the last map is induced by the natural map $\OO_{G_0}\otimes\g_{\ol{1}}^*\to\II_{C_0}$ coming from the application of $u_{ad}$.  However, restricting to the fiber of the above short exact sequence at the identity, we obtain
\[
0\to\EE|_{eG}\to \g_{\ol{1}}^*\to (\g_{\ol{0}}/\c(u))^*\to 0
\]
where the last map is dual to the map given by $[u,-]:\g_{\ol{0}}/\c(u)\to\g_{\ol{1}}$. Thus we have that $\EE|_0\cong (\g_{\ol{1}}/[u,\g_{\ol{0}}])^*$.    Now all the above sequences are $C(u)_0$-equivariant, and thus it follows that as a $C(u)_0$-module we have $(\widetilde{\g_u})_{\ol{1}}\cong \g_{\ol{1}}/[u,\g_{\ol{0}}]$, completing the proof.


\section{Third approach: looking in the enveloping algebra}

A less developed approach that probably leads to rich symmetries is to look in the full enveloping algebra.  Again let $\k\sub[u,\g^c]$ be a subalgebra which acts semisimply on all modules in $\CC$.  Then once again by Lemma \ref{invariance_ds_commute}, we have a canonical isomorphism $DS_u\cong DS_u\circ(-)^{\k}$.  For $M$ in $\CC$, $(\UU\g)^{\k}$ will act on $M^{\k}$, with the subspace $(\k\UU\g+\UU\g\k)^{\k}$ acting trivially.  Write $\II$ for the ideal generated by $(\k\UU\g+\UU\g\k)^{\k}$ in $(\UU\g)^{\k}$, so that $(\UU\g)^{\k}/\II$ naturally acts on $M^{\k}$.  Then the associative superalgebra
\[
\UU(\g,\k,u):=\left(\frac{(\UU\g)^{\k}}{\II}\right)_u
\]
will naturally act on $(M^{\k})_u$, and therefore define an action on the functor $DS_u$. 

\begin{remark}
	It is not apparent to this author whether $\UU(\g,\k,u)$ is a bialgebra, much less a Hopf algebra, and while it acts naturally on $DS_u$ as a functor, it is not clear how to state its relationship with tensor product.
\end{remark} 

\begin{example}
	Let $u\in\g\l(1|1)_{\ol{1}}$, and consider $\k=\C\langle c\rangle$.  Then because $c$ is central, $(\UU\g)^{\k}/\II=\UU\p\g\l(1|1)$.  It follows that $\UU(\g,\k,u)=\UU\g(\k,u)$ in this case.
\end{example}

\subsection{An example for $\g\l(n|n)$}  The following example is taken from sections 26 and 27 of \cite{HW}.  Consider $\g=\g\l(n|n)$, and let 
\[
u=\begin{bmatrix} 0 & I_n\\0 & 0\end{bmatrix}.
\]
Let $\g=\g_{-1}\oplus\g_{0}\oplus\g_1$ be the usual $\Z$-grading on $\g$.  Let $\k$ be the diagonal subalgebra of $\g_{0}$ given by
\[
\k=\begin{bmatrix}A & 0\\0 & A\end{bmatrix},
\]
where $A$ is arbitrary.   Then $[u,\g_{-1}]=\k\sub[u,\g]$, and thus $u$ defines an odd $\k$-equivariant isomorphism $\g_{-1}\to\k$.  Now let $L_0$ be a $\g_{0}\cong\g\l(n)\times\g\l(n)$-module, and consider the Kac-module $K(L_0):=\Ind_{\g_{0}\oplus\g_{1}}^{\g}L_0$.  Over $\g_{-1}\oplus\k$, this module is given by
\[
\bigwedge\g_{-1}\otimes \operatorname{Res}_{\k}L_0
\]
\begin{lemma}[\cite{HW}] Under the isomorphism $\g_{-1}\cong\k$, the operator $u$ induces the Lie algebra homology differential on the Chevalley-Eilenberg complex $\bigwedge\g_{-1}\otimes \operatorname{Res}_{\k}L_0$.
\end{lemma}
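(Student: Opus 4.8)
The plan is to make the identification $K(L_0)=\bigwedge\g_{-1}\otimes\operatorname{Res}_\k L_0$ completely explicit via the PBW theorem and then compute the operator $u$ by normal-ordering. Since $\g_{-1}$ is an abelian odd subalgebra, $\UU\g_{-1}=\bigwedge\g_{-1}$, so a basis of $K(L_0)$ is given by monomials $y_1\cdots y_p\otimes v$ with $y_i\in\g_{-1}$ and $v\in L_0$, the tensor being taken over $\UU(\g_0\oplus\g_1)$; in particular $u\in\g_1$ acts by zero on $L_0$. I would first record the one computation that drives everything: for $y\in\g_{-1}$ a direct matrix multiplication gives $[u,y]=\phi(y)$, where $\phi\colon\g_{-1}\to\k$ is the $\k$-equivariant isomorphism $y\mapsto[u,y]$ described above. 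Writing $a_i:=\phi(y_i)\in\k$, the identification $\bigwedge\g_{-1}\cong\bigwedge\k$ sends $y_1\cdots y_p$ to $a_1\wedge\cdots\wedge a_p$.

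The computation of $u\cdot(y_1\cdots y_p\otimes v)$ then proceeds in two passes. In the first pass I commute the odd element $u$ to the right through the odd factors $y_i$; using $u y_i=[u,y_i]-y_i u=\phi(y_i)-y_i u$ repeatedly yields
\[
u\cdot(y_1\cdots y_p\otimes v)=\sum_{i=1}^p(-1)^{i-1}\,y_1\cdots y_{i-1}\,\phi(y_i)\,y_{i+1}\cdots y_p\otimes v,
\]
the leftover term $(-1)^p\,y_1\cdots y_p\,u\otimes v$ vanishing because $u$ acts as zero on $L_0$. In the second pass each even element $\phi(y_i)=a_i\in\k\subseteq\g_0$ must itself be normal-ordered to the right; moving it past the $y_j$ with $j>i$ produces, for each such $j$, a term involving $[a_i,y_j]\in\g_{-1}$, together with one final term in which $a_i$ reaches $v$ and acts through the $\g_0$-action, i.e. $y_{i+1}\cdots y_p\,a_i\otimes v=y_{i+1}\cdots y_p\otimes(a_i\cdot v)$.

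These two families of terms are exactly the two terms of the Chevalley--Eilenberg homology differential. The action terms give $\sum_i\pm\,a_1\wedge\cdots\widehat{a_i}\cdots\wedge a_p\otimes(a_i\cdot v)$. For the bracket terms the key input is $\k$-equivariance of $\phi$: applying $\phi$ to $[a_i,y_j]=[\phi(y_i),y_j]\in\g_{-1}$ gives $[\phi(y_i),\phi(y_j)]=[a_i,a_j]$, so under $\bigwedge\g_{-1}\cong\bigwedge\k$ these become $\sum_{i<j}\pm\,[a_i,a_j]\wedge a_1\wedge\cdots\widehat{a_i}\cdots\widehat{a_j}\cdots\wedge a_p\otimes v$. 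Hence $u$ acts as the Lie algebra homology differential of $\k$ with coefficients in $\operatorname{Res}_\k L_0$.

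The only genuine work is the sign bookkeeping, which I expect to be the main (though routine) obstacle: one must verify that the factor $(-1)^{i-1}$ from commuting $u$ past the first $i-1$ odd factors, combined with the sign $(-1)^{j-2}$ picked up when transposing the odd element representing $[a_i,a_j]$ into leading position among $y_1,\dots,\widehat{y_i},\dots,\widehat{y_j},\dots,y_p$, reproduces the signs of the standard differential up to a single global sign, which is harmless as it does not affect the homology. One should also note that $\g_{-1}$ being abelian is what guarantees the process terminates after these two passes, with no higher-order corrections.
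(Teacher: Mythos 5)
Your proof is correct and is essentially the argument the paper itself outsources to Lemma 26.1 of \cite{HW} (and which it reproduces in dual form in its final section, where the same normal-ordering computation is carried out for the cohomology differential on $S\g_{-1}^*$): identify $K(L_0)\cong\UU\g_{-1}\otimes L_0=\bigwedge\g_{-1}\otimes L_0$ by PBW, commute $u$ rightward through the odd factors using $[u,y_i]=\phi(y_i)\in\k$ and the vanishing of $u$ on $L_0$, then commute each $\phi(y_i)$ rightward using $[\k,\g_{-1}]\subseteq\g_{-1}$ and the $\k$-equivariance of $\phi$ (which holds because $[u,\k]=\tfrac12[[u,u],\g_{-1}]=0$). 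Your anticipated sign worry also resolves cleanly: the action terms carry $(-1)^{i+1}$ and the bracket terms carry $(-1)^{i-1}(-1)^{j-2}=(-1)^{i+j+1}$, which together constitute exactly one of the two standard (mutually opposite, equally valid) sign conventions for the Chevalley--Eilenberg homology differential, so the identification holds on the nose rather than merely up to a global sign.
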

It follows from \cite{hochschild1953cohomology} that
\[
DS_u\Ind_{\g_{0}\oplus\g_{1}}^{\g}L_0\cong H^\bullet(\k,\C)\otimes L_0^\k.
\]
Here $H^\bullet(\k,\C)$ denotes the Lie algebra homology of the trivial $\k$-module; in \cite{hochschild1953cohomology}, they further show that this homology is exactly given by $(\bigwedge\k)^{\k}$, and is isomorphic to an exterior algebra on $n$ generators, where $n$ is the rank of $\k\cong\g\l(n)$.

Now consider the associative superalgebra $\UU(\g,\k,u)$.  We have $(\UU\g_{-1})^{\k}\sub(\UU\g)^{\k}$, and therefore we have a natural map to the quotient:
\[
(\UU\g_{-1})^{\k}\to\frac{(\UU\g)^{\k}}{(\k\UU\g+\UU\g\k)^{\k}}.
\]
Call $R$ the image of this subalgebra. We have $[u,\UU\g_{-1}]\sub\UU\g_{-1}+(\k\UU\g+\UU\g\k)$, so that $u$ preserves $R$, and we obtain an action of $R_u$ on $(V^{\k})_u$.  However, if we take a Kac-module on the trivial module, $\Ind_{\g_{0}\oplus\g_{1}}^{\g}\C$ as defined above,  from what we showed earlier we will have that $DS_u\Ind_{\g_{0}\oplus\g_{1}}^{\g}\C$ will be a free module over $(\UU\g_{-1})^{\k}$.  It follows that 
\[
R\cong R_u\cong (\UU\g_{-1})^{\k},
\]
and it acts on $DS_u$.   Further we have completely described its action on $DS_u$ applied to any Kac module.

In the last section we will show that $R$ is naturally identified with the Lie superalgebra of a naturally constructed Lie supergroup which acts on $DS_u$ from Example \ref{full rank gl computation example}.

\subsection{General statement}\label{section general enveloping}
 Let $\g$ be a finite-dimensional Lie superalgebra with a compatible $\Z$-grading $\g=\g_{-1}\oplus\g_{0}\oplus\g_1$.  Suppose that $u\in\g_{1}$ is an element such that $[u,-]:\g_{-1}\to \g_0$ is injective and $\k:=[u,\g_{-1}]\sub[u,\g]$ is a reductive Lie algebra.  Then in particular $u$ defines an odd $\k$-equivariant isomorphism $\g_{-1}\cong\k$.
 
 Let $L_0$ be a finite-dimensional $\g_0$ which is semisimple over $\k$, and consider the associated Kac-module $K(L_0)=\Ind_{\g_0\oplus\g_{1}}^{\g}L_0$.

\begin{lemma} Under the isomorphism $\g_{-1}\cong\k$, the operator $u$ induces the Lie algebra homology differential on the Chevalley-Eilenberg complex $\bigwedge\g_{-1}\otimes \operatorname{Res}_{\k}L_0$.  In particular we have an identification
	\[
	DS_uK(L_0)=(\UU\g_{-1})^{\k}\otimes L_0^{\k}.
	\]
 \end{lemma}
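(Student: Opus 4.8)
The plan is to make the PBW identification explicit, recognize the action of $u$ as the Chevalley--Eilenberg differential, and then compute the homology of a reductive Lie algebra. By the Poincar\'e--Birkhoff--Witt theorem, restricting $K(L_0)=\Ind_{\g_0\oplus\g_1}^{\g}L_0$ to $\g_{-1}$ gives a canonical identification $K(L_0)\cong\UU\g_{-1}\otimes L_0$ of super vector spaces. The grading forces $[\g_{-1},\g_{-1}]\sub\g_{-2}=0$, so $\g_{-1}$ is odd abelian and $\UU\g_{-1}=\bigwedge\g_{-1}$ (the supersymmetric, hence exterior, algebra). Transporting the $\k$-equivariant isomorphism $\g_{-1}\xrightarrow{\sim}\k$, $x\mapsto[u,x]$, we view $K(L_0)$ as the underlying space $\bigwedge\k\otimes\operatorname{Res}_{\k}L_0$ of the chain complex computing $H_\bullet(\k,\operatorname{Res}_{\k}L_0)$. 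The first task is to show $u$ acts as the homology differential under this identification; the second is to compute that homology.

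For the first task, I would compute $u\cdot(x_1\cdots x_p\otimes v)$ in $\UU\g\otimes_{\UU(\g_0\oplus\g_1)}L_0$, with $x_i\in\g_{-1}$ and $v\in L_0$, by commuting $u$ to the right. Using $ux_i=[u,x_i]-x_iu$ repeatedly and the fact that $\g_1$ (hence $u$) annihilates $L_0$, the rightmost term $\pm x_1\cdots x_p u\otimes v$ vanishes, leaving
\[
u\cdot(x_1\cdots x_p\otimes v)=\sum_{i=1}^p(-1)^{i-1}x_1\cdots x_{i-1}\,[u,x_i]\,x_{i+1}\cdots x_p\otimes v.
\]
Each $[u,x_i]\in\k\sub\g_0$ is then commuted rightward: passing it over $x_j$ produces a term $[[u,x_i],x_j]\in\g_{-1}$, and the final passage across the tensor lets $[u,x_i]$ act on $v$. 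These are exactly the two families of terms in the homology differential. To match the quadratic terms with the bracket of $\k$, I would use the super Jacobi identity together with $[u,[u,x_i]]=\tfrac12[[u,u],x_i]=0$ to obtain
\[
[u,[[u,x_i],x_j]]=\pm[[u,x_i],[u,x_j]],
\]
so that under $x\mapsto[u,x]$ the element $[[u,x_i],x_j]$ corresponds (up to sign) to the Lie bracket $[\,[u,x_i],[u,x_j]\,]$ in $\k$, while the coefficient $[u,x_i]\cdot v$ supplies the module action. This identifies $u$ with the Chevalley--Eilenberg differential.

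Finally I would compute $H_\bullet(\k,\operatorname{Res}_\k L_0)$ using that $\k$ is reductive and $L_0$ is semisimple over $\k$. Decomposing $L_0$ into $\k$-isotypic components, homology with coefficients in any nontrivial simple summand vanishes (Whitehead's lemma on the semisimple part, and invertibility of the Koszul differential on nonzero central weights), so only $L_0^{\k}$ survives and the module part of the differential dies on it. Hence $H_\bullet(\k,\operatorname{Res}_\k L_0)\cong H_\bullet(\k,\C)\otimes L_0^{\k}$, and by \cite{hochschild1953cohomology} the left factor is $(\bigwedge\k)^{\k}$. Transporting back along $\g_{-1}\cong\k$ gives $(\bigwedge\g_{-1})^{\k}=(\UU\g_{-1})^{\k}$, whence $DS_uK(L_0)=(\UU\g_{-1})^{\k}\otimes L_0^{\k}$. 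The main obstacle I anticipate is the sign bookkeeping: reconciling the Koszul signs from the odd, anticommuting generators of $\UU\g_{-1}$ with those of the purely even Chevalley--Eilenberg complex of $\k$, and fixing the ambiguous $\pm$ above so that the two differentials agree on the nose rather than merely up to an automorphism of the complex.
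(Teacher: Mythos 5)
Your proposal is correct and takes essentially the same approach as the paper: the paper's proof simply defers the identification of $u$ with the Chevalley--Eilenberg homology differential to Lemma 26.1 of \cite{HW} (whose argument is exactly your PBW/commutation computation) and then, as in the $\g\l(n|n)$ example, invokes \cite{hochschild1953cohomology} to get $H_\bullet(\k,\operatorname{Res}_\k L_0)\cong H_\bullet(\k,\C)\otimes L_0^{\k}\cong(\bigwedge\k)^{\k}\otimes L_0^{\k}$. Your write-up just makes those cited steps explicit, and the sign issue you flag is harmless for the same reason the paper gives in its analogous Section 6 computation: the discrepancy is a graded sign factor, which does not affect the cohomology.
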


\begin{proof}
The proof works verbatim as in Lemma 26.1 of \cite{HW}.
\end{proof}

Now $\g_{-1}\sub\g$ is an abelian subalgebra stable under $\k$, and thus we may consider the image $R$ of the map
\[
(\UU\g_{-1})^{\k}\hookrightarrow\frac{(\UU\g)^{\k}}{(\k\UU\g+\UU\g\k)^{\k}}.
\]
Again we have $[u,\UU\g_{-1}]\sub\UU\g_{-1}+(\k\UU\g+\UU\g\k)$, so that $u$ preserves $R$, and we obtain an action of $R_u$ on the functor $DS_u$.   The following proof again works the same way as in the $\g=\g\l(n|n)$ case.

\begin{thm}\label{thm_kac_mod}
We have 
\[
R_u\cong R\cong(\UU\g_{-1})^{\k}\cong (\bigwedge \k)^{\k},
\]
and $R\to\UU(\g,\k,u)$ is injective.  As a module over $R$, we have an isomorphism
\[
DS_uK(L_0)\cong R\otimes L_0^{\k},
\]
i.e., $DS_uK(L_0)$ is a free $R$-module of rank $\dim L_0^{\k}$.
\end{thm}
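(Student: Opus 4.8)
The plan is to deduce everything from a single computation: the value of $DS_u$ on the Kac-module $K(\C)$ built on the trivial $\g_0$-module, where $DS_uK(\C)$ realizes $R$ as the (faithful) regular representation of itself; the general case then follows by tensoring with $L_0^{\k}$. First I would record the algebra isomorphisms. Since the grading is compatible and there is no part in degree $2$, the subalgebra $\g_{-1}$ is purely odd and abelian, so $\UU\g_{-1}=\bigwedge\g_{-1}$ as a supercommutative algebra; the odd $\k$-equivariant isomorphism $\g_{-1}\cong\k$ induced by $u$ then gives $(\UU\g_{-1})^{\k}\cong(\bigwedge\k)^{\k}$, and for reductive $\k$ the latter is the full homology $H_\bullet(\k,\C)$ by \cite{hochschild1953cohomology}.

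Next I would show that $u$ acts by zero on $R$, so that $R_u=R$. Because $u\in\g_1$ forces $[u,u]\in\g_2=0$, the super-Jacobi identity gives $[u,\k]=[u,[u,\g_{-1}]]=0$; hence $u$ centralizes $\k$ and $[u,\omega]\in(\UU\g)^{\k}$ for every $\omega\in(\UU\g_{-1})^{\k}$. Repeating the computation of the preceding lemma at the level of $\UU\g$ rather than on a single module, one checks that modulo the two-sided ideal $(\k\UU\g+\UU\g\k)$ the operator $[u,-]$ on $\UU\g_{-1}$ induces exactly the Chevalley--Eilenberg homology differential on $\bigwedge\k$. On $\k$-invariants this differential vanishes, so $[u,\omega]$ lies in $(\k\UU\g+\UU\g\k)\cap(\UU\g)^{\k}=(\k\UU\g+\UU\g\k)^{\k}$ and therefore maps to $0$ in $R$. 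Consequently $u$ is the zero differential on $R$ and $R_u=R$.

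I would then run the faithfulness argument. For $L_0=\C$ the preceding lemma gives $DS_uK(\C)=(\UU\g_{-1})^{\k}$, and by PBW the tautological action of $(\UU\g_{-1})^{\k}\subseteq(\UU\g)^{\k}$ on $K(\C)^{\k}=(\bigwedge\g_{-1})^{\k}$ by left multiplication is precisely the regular representation of $(\UU\g_{-1})^{\k}$ on itself, which is faithful. This action factors as
\[
(\UU\g_{-1})^{\k}\twoheadrightarrow R=R_u\longrightarrow\End\big(DS_uK(\C)\big),
\]
and, by the compatibility of the constructions, also through $\UU(\g,\k,u)$. Since the composite is injective, each of the maps $(\UU\g_{-1})^{\k}\to R$ and $R\to\UU(\g,\k,u)$ is injective, yielding $R\cong R_u\cong(\UU\g_{-1})^{\k}\cong(\bigwedge\k)^{\k}$ together with the desired embedding into $\UU(\g,\k,u)$.

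Finally, for the module statement I would return to general $L_0$. By PBW $K(L_0)\cong\bigwedge\g_{-1}\otimes L_0$ with $(\UU\g_{-1})^{\k}$ acting by left multiplication on the first factor, and since $[u,\omega]$ acts by zero on $K(L_0)^{\k}$ for invariant $\omega$, left multiplication by $\omega$ supercommutes with the differential $u$ and descends to $DS_uK(L_0)$. Under the vector-space identification $DS_uK(L_0)=(\UU\g_{-1})^{\k}\otimes L_0^{\k}$ of the preceding lemma, a homology class is represented by $\omega\otimes v$ with $\omega$ an invariant cycle and $v\in L_0^{\k}$, and the $R$-action sends it to $\omega'\omega\otimes v$; this exhibits $DS_uK(L_0)$ as the free $R$-module $R\otimes L_0^{\k}$ of rank $\dim L_0^{\k}$. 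The main obstacle I anticipate is the second step: carefully transporting the module-level computation of the preceding lemma to the algebra $\UU\g$, so as to identify $[u,-]$ modulo the ideal with the Chevalley--Eilenberg differential, and thereby confirming both that $u$ vanishes on $R$ and that the induced action on $DS_uK(\C)$ is genuinely the faithful regular representation rather than a twist of it.
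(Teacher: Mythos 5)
Your proposal is correct and follows essentially the same route as the paper: identify the action of $u$ (modulo the ideal) with the Chevalley--Eilenberg homology differential, invoke the Hochschild--Serre identification $H_\bullet(\k,\C)\cong(\bigwedge\k)^{\k}$, and deduce all injectivity claims from the faithfulness of the regular representation of $(\UU\g_{-1})^{\k}$ on $DS_uK(\C)$. Your explicit verification that the induced differential vanishes identically on $R$ (so that $R_u=R$ and the left-multiplication action genuinely descends to cohomology) is a detail the paper leaves implicit in its appeal to the $\g\l(n|n)$ case, and it is a worthwhile clarification rather than a different approach.
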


For a conjectural extension of the above theorem, see \cref{conj_intro_kac}.

\subsection{Extension to $\p(n)$}  The ideas and results of Section \ref{section general enveloping} in particular apply to $\g=\p(n)$. There are two cases here.  We recall that $\p(n)$ can be presented as the subalgebra of $\g\l(n|n)$ consisting of matrices of the form
\[
\begin{bmatrix}
A & B\\ C& -A^{t}
\end{bmatrix}
\]
where $B^t=B$ and $C^t=-C$.  This superalgebra admits a compatible $\Z$-grading $\p(n)=\p_{-1}\oplus\p_0\oplus\p_1$. Thus, given a $\g_0\cong\g\l(n)$-module $L_0$, we may define the thin and thick Kac modules as follows: the thin Kac module on $L_0$ is given by $K^-(L_0)=\Ind_{\g_{0}\oplus\g_{1}}^{\g}L_0$, and the thick Kac module on $L_0$ is given by $K^+(L_0)=\Ind_{\g_0\oplus\g_{-1}}^{\g}L_0$.

For any $n>0$ set
\[
u_+=\begin{bmatrix}
0 & I_n\\0 & 0
\end{bmatrix}.
\]
Then $\k_{+}:=[u_{+},\g_{-1}]\cong\s\o(n)$, and $u_{+}$ defines an odd isomorphism of $\k_{+}$-modules $\g_{-1}\cong\k_+$.   For $n$ even, set 
\[
u_-=\begin{bmatrix}
0 & 0\\ J & 0
\end{bmatrix}
\]
where 
\[
J=\begin{bmatrix} 0 & 1 & 0 & \dots& & 0\\ -1 & 0 & 1 & & & \vdots\\ 0  & -1 & 0  &  & &\\ \vdots & &&\ddots && \end{bmatrix}.
\]
Then in this case, $\k_{-}:=[u_-,\g_{1}]\cong\s\p(n)$, and $u_{-}$ defines an odd isomorphism of $\k_{-}$-modules $\g_{1}\cong\k_-$.  From Section \ref{section general enveloping} we have the following lemma and theorem:
\begin{lemma}
	For a $\g\l(n)$-module $L_0$, $u_{\pm}$ induces the Lie algebra homology differential on the Chevalley-Eilenberg complex 
	\[
	K^{\mp}(L_0)=\bigwedge\g_{\mp1}\otimes \operatorname{Res}_{\k_{\pm}}L_0
	\]
\end{lemma}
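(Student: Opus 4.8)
The plan is to deduce both cases from the general Lemma of \cref{section general enveloping}, since $\p(n)$ with its grading $\p(n)=\p_{-1}\oplus\p_0\oplus\p_1$ is exactly the type of compatibly $\Z$-graded Lie superalgebra treated there. The only work is to check the hypotheses of that lemma in each case and, for $u_-$, to pass to the opposite grading.

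First I would treat $u_+$. Here $u_+\in\p_1$, and the map $[u_+,-]\colon\p_{-1}\to\p_0$ is, by the computation recorded just above the statement, an odd $\k_+$-equivariant isomorphism onto $\k_+:=[u_+,\p_{-1}]\cong\s\o(n)$; in particular it is injective and its image $\k_+$ is reductive (indeed semisimple). Thus the triple consisting of $\p(n)$, the element $u_+\in\p_1$, and $\k_+$ satisfies all the hypotheses of the general Lemma of \cref{section general enveloping}. By PBW, $K^-(L_0)=\Ind_{\p_0\oplus\p_1}^{\p(n)}L_0\cong\bigwedge\p_{-1}\otimes L_0$ as vector spaces, and the cited lemma gives precisely that, under the identification $\p_{-1}\cong\k_+$, the operator $u_+$ becomes the Chevalley--Eilenberg homology differential on $\bigwedge\p_{-1}\otimes\operatorname{Res}_{\k_+}L_0$.

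For $u_-$ I would invoke the same lemma after reversing the grading. Set $\p_1':=\p_{-1}$, $\p_0':=\p_0$, and $\p_{-1}':=\p_1$; this is again a compatible $\Z$-grading of $\p(n)$, and $u_-\in\p_{-1}=\p_1'$. The map $[u_-,-]\colon\p_{-1}'=\p_1\to\p_0$ is the odd $\k_-$-equivariant isomorphism onto $\k_-:=[u_-,\p_1]\cong\s\p(n)$ recorded above, so it is injective with reductive (semisimple) image. Moreover the thick Kac module $K^+(L_0)=\Ind_{\p_0\oplus\p_{-1}}^{\p(n)}L_0=\Ind_{\p_0'\oplus\p_1'}^{\p(n)}L_0$ is the thin Kac module for the primed grading, and equals $\bigwedge\p_1\otimes L_0$ by PBW. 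Applying the general Lemma of \cref{section general enveloping} to the primed data then yields that $u_-$ induces the Chevalley--Eilenberg differential on $\bigwedge\p_1\otimes\operatorname{Res}_{\k_-}L_0$, as claimed.

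The proof thus reduces to verifications essentially already in hand: the injectivity of $[u_\pm,-]$ and the identification of its image $\k_\pm$ as $\s\o(n)$ resp.\ $\s\p(n)$, which are direct matrix computations, together with the reductivity of $\k_\pm$, which is automatic. I do not expect a genuine obstacle; the only points requiring care are the bookkeeping of the opposite grading in the $u_-$ (thick) case, and the observation that, since $\k_\pm$ is semisimple and $L_0$ is finite-dimensional, $\operatorname{Res}_{\k_\pm}L_0$ is automatically semisimple by Weyl's theorem, so the semisimplicity hypothesis of the general lemma is met.
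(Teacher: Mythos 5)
Your proposal is correct and takes essentially the same route as the paper: the paper likewise obtains this lemma by direct appeal to the general statement of Section \ref{section general enveloping} (itself proved by citing Lemma 26.1 of \cite{HW}), with the thick/$u_-$ case handled by the implicit reversal of the $\Z$-grading that you spell out explicitly. The only micro-caveat is your appeal to Weyl's theorem: for $n\le 2$ the algebra $\k_+\cong\s\o(n)$ is not semisimple, though the restriction of $L_0$ to it is still semisimple (it is zero or a torus acting by semisimple elements of $\g\l(n)$), so nothing breaks.
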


\begin{thm}\label{DS computation p case}
	\begin{enumerate}
	\item $\UU(\p_{-1})^{\k_+}$ is isomorphic to an exterior algebra on $\lfloor n/2\rfloor$ generators, and the natural map $\UU(\p_{-1})^{\k_+}\to\UU(\p(n),\k_+,u_+)$ is injective.
	
	\item  For $\g=\p(2n)$, $\UU(\p_{1})^{\k_-}$ is isomorphic to an exterior algebra on $n$ generators, and the natural map $\UU(\p_{1})^{\k_-}\to\UU(\p(n),\k_-,u_-)$ is injective.
	
	\item For a finite-dimensional $\g_{0}$-module $L_0$, $DS_{u_+}K^-(L_0)\cong \UU(\p_{-1})^{\k_+}\otimes L_0^{\k_+}$ as a module over $\UU(\p_{-1})^{\k_+}$, and $DS_{u_-}K^+(L_0)\cong\UU(\p_{1})^{\k_-}\otimes L_0^{\k_-}$ as a module over $\UU(\p_{1})^{\k_-}$.
\end{enumerate}
\end{thm}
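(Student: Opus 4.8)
The plan is to deduce all three parts directly from \cref{thm_kac_mod}, which was established in the general graded setting of \cref{section general enveloping}; the only genuinely new ingredient is a classical identification of $(\bigwedge\k_\pm)^{\k_\pm}$ as an exterior algebra of the correct rank.

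First I would check that both $u_+$ and $u_-$ fall under the hypotheses of \cref{section general enveloping}. For $u_+\in\g_1$ the lemma preceding the theorem already records that $[u_+,-]\colon\g_{-1}\to\g_0$ is injective with image the reductive (indeed semisimple) subalgebra $\k_+\cong\s\o(n)$, so that \cref{thm_kac_mod} applies verbatim to the thin Kac module $K^-(L_0)$. For $u_-\in\g_{-1}$ I would apply the same theorem to the grading with the roles of $\g_1$ and $\g_{-1}$ interchanged: since nothing in the proof of \cref{thm_kac_mod} privileges one graded piece over the other, running it for the reversed grading makes $K^+(L_0)=\Ind_{\g_0\oplus\g_{-1}}^{\g}L_0$ play the role of the Kac module, and yields $R\cong(\UU\g_1)^{\k_-}\cong(\bigwedge\k_-)^{\k_-}$ together with injectivity of $R\to\UU(\p(n),\k_-,u_-)$. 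I would also note that the hypothesis on $L_0$ is automatic here: because $\k_\pm$ is semisimple, Weyl's theorem guarantees that every finite-dimensional $\g_0$-module is semisimple over $\k_\pm$.

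At this point \cref{thm_kac_mod} has already delivered the injectivity statements of (1) and (2), the isomorphisms $R_u\cong R\cong(\bigwedge\k_\pm)^{\k_\pm}$, and the module structures $DS_{u_+}K^-(L_0)\cong R\otimes L_0^{\k_+}$ and $DS_{u_-}K^+(L_0)\cong R\otimes L_0^{\k_-}$ asserted in part (3). It remains only to identify $(\bigwedge\k_\pm)^{\k_\pm}$ as an exterior algebra with the stated number of generators. Here I would invoke the Hopf--Koszul--Samelson theorem: for a reductive Lie algebra $\k$ carrying a nondegenerate invariant form, so that $\k\cong\k^*$ as $\k$-modules, one has $(\bigwedge\k)^{\k}\cong(\bigwedge\k^*)^{\k}=H^\bullet(\k;\C)$, and the latter is a free exterior algebra on $\operatorname{rank}\k$ homogeneous generators (in degrees $2m_i+1$, with $m_i$ the exponents). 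Substituting $\operatorname{rank}\s\o(n)=\lfloor n/2\rfloor$ gives part (1), and $\operatorname{rank}\s\p(2n)=n$ gives part (2).

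Once \cref{thm_kac_mod} is in hand the argument is essentially bookkeeping, so I do not anticipate a serious obstacle. The one point demanding care is the $u_-$ case: I must be sure that the proof of the general theorem is symmetric in the two graded pieces, so that reversing the grading is legitimate and the thick Kac module is genuinely the induced module attached to the reversed grading. The classical rank computation for $\s\o(n)$ and $\s\p(2n)$ is standard and poses no difficulty.
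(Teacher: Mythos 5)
Your proposal is correct and matches the paper's own route: the paper proves Theorem \ref{DS computation p case} simply by specializing the general graded-setting results of Section \ref{section general enveloping} (Theorem \ref{thm_kac_mod}) to $\p(n)$ with $u_\pm$, exactly as you do, with the identification of $(\bigwedge\k_\pm)^{\k_\pm}$ as an exterior algebra on $\operatorname{rank}\k_\pm$ generators coming from the same classical result (which the paper attributes to Hochschild--Serre rather than naming Hopf--Koszul--Samelson). The two details you flag --- reversing the $\Z$-grading so that $u_-\in\g_{-1}$ and the thick Kac module $K^+(L_0)$ fit the hypotheses, and the automatic $\k_\pm$-semisimplicity of finite-dimensional $\g_0$-modules --- are precisely the points the paper leaves implicit, and your treatment of them is sound.
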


\begin{remark}
	We recall that $\k_+=\s\o(n)\sub\g\l(n)$ and $\k_-\s\p(2n)\sub\g\l(2n)$ are spherical subalgebras, meaning that given an irreducible $\g\l(n)$-representation $L_0$, $\dim L_0^{\k_{\pm}}\leq 1$, and therefore $DS_{u_{\pm}}K^{\mp}(L_0)$ will either be 0 or isomorphic to a free module of rank one over $\UU(\p_{\mp})^{\k_{\pm}}$.  For the convenience of the reader, we recall which irreducible $\g\l(n)$-modules have either an $\s\o(n)$-invariant or an $\s\p(2n)$-invariant.  
	
	Label the irreducible representations of $\g\l(n)$ according to their dominant weights with respect to the usual positive system, meaning they are given by a decreasing sequence of integers $\lambda_1\geq\lambda_2\geq\dots\geq \lambda_n$.  Write $\lambda=(\lambda_1,\dots,\lambda_n)$, and let $L(\lambda)$ denote the irreducible representation of highest weight $\lambda$.  Then $L(\lambda)$ admits an $\s\o(n)$-invariant if and only if $\lambda_i-\lambda_{i+1}$ is even for all $1\leq i\leq n-1$.  For $\g\l(2n)$, $L(\lambda)$ admits an $\s\p(2n)$-invariant if and only if $\lambda_1=\lambda_2,\lambda_3=\lambda_4,\dots,\lambda_{2n-1}=\lambda_{2n}$.  
	
	We further note, as was proven in lem. 3.4.1 of \cite{balagovic2016translation}, that $K^{-}(L_0)$ is an irreducible $\p(n)$-module whenever the highest weight $\lambda$ of $L(\lambda)$ has that $\lambda_1>\lambda_2>\dots>\lambda_n$.  Thus for these irreducible representations we have now given a formula for the result of $DS_{u_+}$ on them.
\end{remark}

%
%
%
%

\section{Explicit realization of the odd generators} 

Let $G$ be a Lie supergroup with Lie superalgebra $\g$ admitting a $\Z$-grading $\g=\g_{-1}\oplus\g_0\oplus\g_{1}$, and such that $G_0$ preserves this grading.  Suppose as in Section \ref{section general enveloping} that $u\in\g_{1}$ is an element such that $[u,-]:\g_{-1}\to \g_0$ is injective and $\k:=[u,\g_{-1}]\sub[u,\g]$ is a reductive Lie algebra.  

Recall that we have an identification (due to Koszul in \cite{koszul1982graded}):
\[
\C[G]=\Hom_{\g_{\ol{0}}}(\UU\g,\C[G_0]).
\]
Choose the ordered basis of $\g_{\ol{1}}$ given by first taking a basis of $\g_{1}$ and then taking a basis of $\g_{-1}$.  Then using monomials ordered with respect to this basis we have a natural copy of $S\g_{\ol{1}}$ in $\UU\g$, giving rise to an explicit identification
\[
\C[G]\cong S\g_{\ol{1}}^*\otimes\C[G_0].
\]
In particular, we have a natural subalgebra $S\g_{-1}^*\otimes \C\cong S\g_{-1}^*\sub\C[G]$.  We claim that the adjoint vector field of $u$ preserves this subspace.  To show this we explain the form of the left and right infinitesimal translation of a vector $u\in\g$ on $\C[G]$:
\[
u_L(f)(X)(g)=-(-1)^{\ol{u}\ol{f}}f(\Ad(g)^{-1}(u)X)(g),
\]
\[
u_R(f)(X)(g)=(-1)^{\ol{u}(\ol{f}+\ol{X})}f(Xu)(g).
\]
Here $X\in\UU\g$ and $g\in G_0$.  Considering the left infinitesimal translation action first, we see that it will act by $0$ on $S\g_{-1}^*$ since $\Ad(g)(u)\in\g_{1}$ for all $g\in G_0$.  On the other hand it's not hard to see that the right infinitesimal translation by $u$ preserves $S\g_{-1}^*$ since $\g_{-1}$ is $\g_{0}$-stable.  Thus the adjoint action of $u$ induces a complex on $S\g_{-1}^*$.  

\begin{lemma}
	\begin{enumerate}
		\item Under the identification $\g_{-1}\cong\k$, $u$ induces a complex isomorphic to the cohomology differential on the Chevalley-Eilenberg complex $S\g_{-1}^*$.
		\item 
		\[
		DS_uS\g_{-1}^*\cong (S\g_{-1}^*)^{\k}\cong(\bigwedge\k^*)^{\k}.
		\]
		\item $S\g_{-1}^*$ is dual, as a complex over $u$, to $\UU\g/\UU(\g)(\g_{0}\oplus\g_{1})\cong\UU\g_{-1}$ under the canonical nondegenerate pairing $\UU\g\otimes\C[G]\to\C$.  Further this pairing descends to a nondegenerate pairing $(\UU\g_{-1})^{\k}\otimes(S\g_{-1}^*)^{\k}\to \C$.
		
		\item The map $(S\g_{-1}^*)^{\k}\to \C[G]_u$ is injective.

	\end{enumerate}
\end{lemma}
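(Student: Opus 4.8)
The map in question is the one induced on cohomology by the inclusion of the subcomplex $(S\g_{-1}^*,d)\hookrightarrow(\C[G],u_{ad})$ from parts (1)--(2); call it $\iota_\bullet\colon (S\g_{-1}^*)^{\k}=H^\bullet(S\g_{-1}^*)\to H^\bullet(\C[G])=\C[G]_u$. The plan is to detect the nonzero classes in its image by pairing against $R=(\UU\g_{-1})^{\k}$, using the perfect pairing of part (3). Recall that the canonical pairing $\langle\,,\rangle\colon\UU\g\otimes\C[G]\to\C$ is equivariant for the adjoint action on both sides (conjugation fixes $e$, so it preserves distributions supported at $e$), whence $u_{ad}$ is adjoint to $[u,-]$; by part (3) it restricts to the nondegenerate pairing $R\otimes(S\g_{-1}^*)^{\k}\to\C$.

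Suppose then that $\omega\in(S\g_{-1}^*)^{\k}$ lies in the kernel of $\iota_\bullet$, i.e. $\omega=u_{ad}\beta$ for some $\beta\in\C[G]$. For any $X\in(\UU\g_{-1})^{\k}$, adjointness gives $\langle X,\omega\rangle=\langle X,u_{ad}\beta\rangle=\pm\langle[u,X],\beta\rangle$. Since $X$ is harmonic we have $\partial X=0$, and a direct computation (as in the lemma preceding \cref{thm_kac_mod}) shows that $[u,X]$ reduces to $\partial X$ modulo the left ideal $\UU\g_{-1}\cdot\k$; hence $[u,X]\in\UU\g_{-1}\cdot\k$. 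The cleanest way to finish, namely to conclude $\langle[u,X],\beta\rangle=0$ and therefore $\langle R,\omega\rangle=0$ and $\omega=0$ by nondegeneracy, would be to know that the canonical restriction $\rho\colon\C[G]\to S\g_{-1}^*$ dual to $\UU\g_{-1}\hookrightarrow\UU\g$ (restriction of functions to the odd abelian subgroup $N^-$ with $\C[N^-]=S\g_{-1}^*$) is a chain map: then $\langle X,\omega\rangle=\langle X,\rho(u_{ad}\beta)\rangle=\langle X,d\,\rho\beta\rangle=\pm\langle\partial X,\rho\beta\rangle=0$.

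The main obstacle is exactly that $\rho$ is \emph{not} a chain map: $N^-$ is not stable under conjugation by $\exp(tu)$, since $\Ad(\exp tu)x=x+t[u,x]$ with $[u,x]\in\k\sub\g_0$. So the real content is to replace $\rho$ by a chain retraction up to homotopy --- equivalently, by the perfect duality of the two complexes (under which $\iota_\bullet$ is transpose to $\pi_\bullet\colon H^\bullet(\UU\g,[u,-])\to H^\bullet(\UU\g_{-1},\partial)=R$), to prove that $\pi_\bullet$ is surjective, i.e. that every harmonic cycle in $\UU\g_{-1}$ lifts to an honest $[u,-]$-cocycle in $\UU\g$. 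I would establish this by the filtration/spectral-sequence technique already used for $GL(m|n)$ and $Q(n)$: filter $\C[G]$ so that the associated graded differential is the acyclic ``Koszul part'' (a Koszul complex on the regular sequence cutting out the conjugation-fixed locus, regular precisely because $[u,-]\colon\g_{-1}\to\g_0$ is injective with image $\k$), after which the residual differential is a de Rham differential whose cohomology has the form $A\otimes H^\bullet_{dR}(K)$ with $H^\bullet_{dR}(K)\cong(\bigwedge\k^*)^{\k}=(S\g_{-1}^*)^{\k}$; the surviving de Rham classes are precisely $\iota_\bullet(S\g_{-1}^*)^{\k}$, giving injectivity. The technical heart, transparent in the worked examples but requiring care for arbitrary graded $\g$, is the acyclicity of this Koszul part; alternatively one can argue more algebraically, using the reductivity of $\k$ to correct each $X\in(\UU\g_{-1})^{\k}$ by higher-order terms into a genuine $[u,-]$-cocycle, producing the required lift directly.
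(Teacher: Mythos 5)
You are right that the naive argument fails because the restriction $\rho\colon\C[G]\to S\g_{-1}^*$ is not a chain map; that observation is a genuine subtlety which the paper's terse ``(4) follows from (3)'' glosses over. But the repair you propose is fatally flawed: the statement you reduce everything to --- surjectivity of $\pi_\bullet\colon H^\bullet(\UU\g,[u,-])\to H^\bullet(\UU\g_{-1},\partial)=(\UU\g_{-1})^{\k}$, i.e.\ that every harmonic cycle in $(\UU\g_{-1})^{\k}$ lifts to an honest $[u,-]$-cocycle in $\UU\g$ --- is \emph{false} in exactly the cases the lemma is designed for. Indeed, the PBW symmetrization $\Sym\g\to\UU\g$ is an isomorphism of $\g$-modules for the adjoint action, and since $DS_u$ is a tensor functor it commutes with Schur functors, so $H^\bullet(\UU\g,[u,-])=DS_u(\UU\g)\cong\Sym(\g_u)$. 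For $\g=\g\l(n|n)$ with $u$ of maximal rank (the Heidersdorf--Weissauer situation motivating this section) one has $\g_u=0$, hence $H^\bullet(\UU\g,[u,-])=\C$, while $(\UU\g_{-1})^{\k}\cong\left(\bigwedge\g\l(n)\right)^{\g\l(n)}$ has dimension $2^n$. Already for $\g\l(1|1)$, $u=E$: any cocycle in $\UU\g$ is cohomologous to a scalar, so $F\in(\UU\g_{-1})^{\k}$ admits no cocycle lift whatsoever, and no ``correction by higher-order terms'' can exist. Thus both of your proposed strategies are attempts to prove a false statement. (There is no contradiction with the truth of part (4): $\C[G]$ is a proper subcomplex of the full dual $(\UU\g)^*$, so the induced pairing $H^\bullet(\UU\g,[u,-])\otimes\C[G]_u\to\C$ is degenerate, and your ``equivalently'' is only a one-way implication --- surjectivity of $\pi_\bullet$ would imply injectivity of $\iota_\bullet$, but not conversely.)

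The mechanism the paper actually relies on is different, and it is precisely the point of Section 5: for $X\in(\UU\g_{-1})^{\k}$ one has $[u,X]\in\k\UU\g+\UU\g\k$ (the Chevalley--Eilenberg boundary $\partial X$ vanishes on invariants of a reductive $\k$), so $X$ becomes a genuine $u$-cocycle not in $\UU\g$ but in the subquotient $(\UU\g)^{\k}/\II$, whose elements act on $DS_u\cong DS_u\circ(-)^{\k}$ because $\II$ annihilates $\k$-invariants of every module (\cref{thm_kac_mod}). It is this algebra $R\cong(\UU\g_{-1})^{\k}$, together with the nondegenerate \emph{descended} pairing of part (3), that detects the classes of $(S\g_{-1}^*)^{\k}$ inside $\C[G]_u=(\C[G]^{\k})_u$; the paper's derivation of (4) from (3) takes place entirely in this $\k$-invariant, modulo-$\II$ setting. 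Your proposal works entirely inside $\UU\g$ and $\C[G]$, where the required dual cocycles simply do not exist, so it cannot be completed as written.
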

First we recall the Chevalley-Eilenberg complex on $S\g_{-1}^*$; given $f:S^k\g_{-1}\to\C$, the differential $d$ is given by
\[
df(x_1\cdots x_{k+1})=\sum\limits_{i<j}(-1)^{i+j}f([x_i,x_j],x_1\cdots\hat{x_i}\cdots\hat{x_j}\cdots,x_{k+1}).
\]
\begin{proof}
	For $f\in S\g_{-1}^*\sub\C[G]$, let $v_1,\dots,v_{k+1}\in\g_{-1}$. Write $V_j:=[v_j,u]\in\k$.  Then we have
	\begin{eqnarray*}
	(uf)(v_1\cdots v_{k+1})& = &(-1)^{\ol{f}+n}f(v_1\cdots v_{k+1}u)\\
	                   & = &(-1)^{\ol{f}}\sum\limits_{j}(-1)^{j-1}f(v_1\cdots[v_j,u]\cdots v_{k+1})\\
	                   & = &(-1)^{\ol{f}}\sum\limits_{i<j}(-1)^{j-1}f(v_1\cdots[v_i,V_j]\cdots\hat{v_j}\cdots v_{k+1})\\
	                   & = &(-1)^{\ol{f}}\sum\limits_{i<j}(-1)^{j+i}f([v_i,V_j]v_1\cdots\hat{v_i}\cdots\hat{v_j}\cdots v_{k+1})\\
	\end{eqnarray*}
Thus this differs from the Chevalley-Eilenberg complex only in the sign $(-1)^{\ol{f}}$, which is an ineffectual factor since the complex is graded.  This proves (1).  The proof of (2) follows from (1). 

The proof of (3) follows from the fact that the pairing $\UU\g\otimes\C[G]\to\C$ is nondegenerate and equivariant with respect to the adjoint actions on each factor.  Then (4) follows from (3).
\end{proof}

The above lemma tells us that we may view $(S\g_{-1}^*)^{\k}\cong(\bigwedge\k^*)^{\k}$ as the functions on a Lie supergroup and $(\UU\g_{-1})^{\k}$ as its enveloping superalgebra.

\begin{cor}
	Let $L_0$ be a $\g_{0}$-module which is semisimple over $\k$.  Then $DS_uK(L_0)$ is a free comodule over $(S\g_{-1}^*)^\k$ of rank $\dim L_0^{\k}$.
\end{cor}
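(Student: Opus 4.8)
The plan is to deduce the corollary from \cref{thm_kac_mod} by reinterpreting its conclusion --- freeness as a module over the enveloping superalgebra $R\cong(\UU\g_{-1})^{\k}$ --- as freeness as a comodule over the coordinate Hopf superalgebra $(S\g_{-1}^*)^{\k}$. The bridge is the nondegenerate pairing of part (3) of the preceding Lemma, which I would first upgrade to the statement that $(S\g_{-1}^*)^{\k}$ and $(\UU\g_{-1})^{\k}$ are mutually dual finite-dimensional Hopf superalgebras. Writing $\h=\operatorname{Lie}(H)$ for the supergroup $H$ with $\C[H]=(S\g_{-1}^*)^{\k}$, note that $\g_{-1}$ is odd abelian (so $\UU\g_{-1}=\bigwedge\g_{-1}$), whence via $(\UU\g_{-1})^{\k}\cong(\bigwedge\k)^{\k}$ and $(S\g_{-1}^*)^{\k}\cong(\bigwedge\k^*)^{\k}$ both algebras are exterior algebras on $r$ generators and $\h$ is odd abelian. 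I would then check the pairing respects products and coproducts on each side, so that it identifies $\C[H]\cong(\UU\h)^*$ as Hopf superalgebras, where $\UU\h=(\UU\g_{-1})^{\k}$.

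Next I would invoke the standard equivalence for a finite-dimensional coalgebra: finite-dimensional $\C[H]$-comodules are the same as finite-dimensional modules over the dual algebra $\C[H]^*=\UU\h$. The $\C[H]$-comodule structure on $DS_uK(L_0)$ to be used is precisely the one corresponding, under this equivalence, to the $R$-module structure produced in \cref{thm_kac_mod}; equivalently it is the integration of the infinitesimal $\h$-action, which is well defined because $\h$ acts by pairwise anticommuting square-zero odd operators. One may also observe it is induced by the coaction of $\widetilde{G_u}$, but this is not needed.

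It then remains to match freeness on the two sides. Here I would use that $\UU\h$ is a finite-dimensional exterior Hopf superalgebra, hence Frobenius, so that $\C[H]=(\UU\h)^*\cong\UU\h$ as $\UU\h$-modules; consequently the co-regular comodule $\C[H]$ corresponds to the free rank-one module $\UU\h$, and a free comodule $\C[H]^{\oplus d}$ corresponds exactly to the free module $(\UU\h)^{\oplus d}$. By \cref{thm_kac_mod} we have $DS_uK(L_0)\cong R\otimes L_0^{\k}$, a free $R\cong\UU\h$-module of rank $\dim L_0^{\k}$; transporting across the equivalence yields that $DS_uK(L_0)$ is a free $\C[H]=(S\g_{-1}^*)^{\k}$-comodule of rank $\dim L_0^{\k}$, as claimed.

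The main obstacle --- really the only point requiring care --- is the bookkeeping in the first step: verifying that the pairing of part (3) is a pairing of Hopf superalgebras, not merely of algebras, with the correct super signs, so that the module-comodule dictionary and the Frobenius identification $\C[H]\cong\UU\h$ are valid in the graded setting. Once the Hopf duality $\C[H]\cong(\UU\h)^*$ is secured, the remaining steps are formal.
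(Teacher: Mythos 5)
Your proposal is correct, and its core is the same as the paper's: both deduce the corollary from Theorem \ref{thm_kac_mod} via the duality between $(\UU\g_{-1})^{\k}$ and $(S\g_{-1}^*)^{\k}$ coming from part (3) of the preceding lemma, i.e.\ the dictionary between comodules over a finite-dimensional coalgebra and modules over its dual algebra. The paper's own proof is only two sentences --- it takes the comodule structure on $DS_uK(L_0)$ to be the one induced from the coaction of $\C[G]_u$, cites the freeness of the $(\UU\g_{-1})^{\k}$-module structure from Theorem \ref{thm_kac_mod}, and concludes --- so your explicit verifications (that the pairing is one of Hopf superalgebras, and the Frobenius point that $\C[H]\cong\UU\h$ as $\UU\h$-modules, so that free modules correspond to free comodules under the dictionary) fill in steps the paper leaves implicit; the Frobenius step in particular is genuinely needed and is a worthwhile addition. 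The one place you diverge, and where I would push back, is the remark that identifying your coaction with that of $\widetilde{G_u}$ ``is not needed'': in the paper the comodule structure on $DS_uK(L_0)$ is \emph{by definition} the one coming from $\C[G]_u$ --- this is the whole point of Section 6, realizing the enveloping-algebra symmetries of Section 5 as the infinitesimal form of the geometric symmetries of Section 4. If you instead define the coaction by transporting the $R$-module structure across the duality, the freeness assertion becomes nearly tautological and the corollary loses its content. You should therefore keep the compatibility check: the transported coaction agrees with the one induced from $\C[G]_u$, which follows from the $\Ad$-equivariance of the pairing $\UU\g\otimes\C[G]\to\C$ already invoked in part (3) of the lemma.
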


\begin{proof}
	The structure of $DS_uK(L_0)$ as a comodule over $(S\g_{-1}^*)^\k$ comes from its structure over $\C[G]_u$.  On the other hand, we know that under the action of its enveloping algebra $(\UU\g_{-1})^{\k}$, $DS_uK(L_0)$ is a free module of rank $\dim L_0^{\k}$.  From this the statement follows.  
\end{proof}

We now note that the above results apply to the cases we have studied for $G=GL(n|n),P(n)$.  Further, for $GL(n|n)$ we further obtain:
\begin{cor}
	\begin{enumerate}
		\item Let $G=GL(n|n)$ and $u$ as before. Then $(S\g_{-1}^*)^{\k}\cong\C[G]_u$. 
	\end{enumerate}
\end{cor}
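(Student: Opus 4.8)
The plan is to upgrade the injection furnished by the lemma of this section to an isomorphism by a dimension count. By part (4) of that lemma there is an injective map $(S\g_{-1}^*)^{\k}\to\C[G]_u$, and since it is induced by the $u$-equivariant algebra inclusion $S\g_{-1}^*\hookrightarrow\C[G]$ it is a homomorphism of graded supercommutative algebras. It therefore suffices to show that the two sides have the same finite total dimension, for an injective linear map between finite-dimensional spaces of equal dimension is automatically bijective, and hence an algebra isomorphism.

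For the source, part (2) of the lemma gives $(S\g_{-1}^*)^{\k}\cong(\bigwedge\k^*)^{\k}$. Since $\k\cong\g\l(n)$ is reductive, this invariant-forms complex has vanishing differential and is the standard model for the Lie algebra cohomology $H^\bullet(\g\l(n),\C)$, an exterior algebra on the $n$ primitive generators in degrees $1,3,\dots,2n-1$; thus it has total dimension $2^n$ and coincides with $H_{dR}^\bullet(GL_n)$.

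For the target, I would invoke the full rank computation of Section~\ref{full rank gl computation example}, where $\C[GL(n|n)]_u$ was identified with $H_{dR}^\bullet(GL_n)$, again an exterior algebra on $n$ generators of total dimension $2^n$. As the injective algebra map then embeds a $2^n$-dimensional space into a $2^n$-dimensional space, it is an isomorphism, which is the assertion.

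The only subtlety is that the two independent routes to the answer must agree in dimension: Lie algebra cohomology of $\g\l(n)$ on the source, versus the spectral-sequence computation of $\C[GL(n|n)]_u$ on the target. Both are incarnations of $H_{dR}^\bullet(GL_n)$, so the dimension count closes the argument with no need to match the $n$ exterior generators by hand; should one want such an explicit matching, it can be read off by following the diagonal copy of $GL_n\subset GL(n|n)_0$ through the computation of Section~\ref{full rank gl computation example}.
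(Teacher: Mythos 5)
Your proposal is correct and follows essentially the same route as the paper: the paper likewise combines the injectivity of $(S\g_{-1}^*)^{\k}\to\C[G]_u$ from part (4) of the lemma with the identification $\C[G]_u\cong H_{dR}^*(GL(n))$ from the full rank example, and concludes by equality of dimensions. Your version merely makes the dimension count (both sides being exterior algebras on $n$ generators, hence of dimension $2^n$) explicit, which the paper leaves implicit.
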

\begin{proof}
	We have shown that $(S\g_{-1}^*)^{\k}\to\C[G]_u$ is injective; on the other hand we saw in Example \ref{full rank gl computation example} that $\C[G]_u\cong H_{dR}^*(GL(n))$, and thus these superalgebras of the same dimension, meaning they are isomorphic.
\end{proof}

\textsc{\footnotesize Dept. of Mathematics, Ben Gurion University, Beer-Sheva,	Israel} 

\textit{\footnotesize Email address:} \texttt{\footnotesize xandersherm@gmail.com}


\begin{thebibliography}{5}
	
	\bibitem[BDEA+]{balagovic2016translation}
	M.~Balagovic, Z.~Daugherty, I.~Entova-Aizenbud, I.~Halacheva, J.~Hennig, M.S.~Im,
	G.~Letzter, E.~Norton, V.~Serganova, and C.~Stroppel. \emph{Translation functors
		and decomposition numbers for the periplectic {Lie} superalgebra p(n)}, Math. Res. Letters, Vol.~26, No.~3 (2019): 643-710.
	
	\bibitem[CCF]{carmeli2011mathematical}
	C.~Carmeli, L.~Caston, and R.~Fioresi. \emph{Mathematical
		foundations of supersymmetry}, European Mathematical Society, Vol.~15 (2011).
	
	\bibitem[DS]{duflo2005associated}
	M.~Duflo and V.~Serganova. \emph{On associated variety for {Lie}
		superalgebras}, arXiv preprint 0507198 (2005).

	\bibitem[EHS]{EHS}
	I.~Entova-Aizenbud, V.~Hinich, and V.~Serganova. \emph{Deligne categories and the limit of categories $\operatorname{Rep}GL(m|n)$}, International Mathematics Research Notices, Vol.~2020, No.~15 (2020): 4602-4666.
	
	\bibitem[EAS]{entovaaizenbud2019dufloserganova}
	I.~Entova-Aizenbud and V.~Serganova. \emph{Duflo–Serganova functor and superdimension formula for the periplectic Lie superalgebra}, Algebra \& Number Theory, Vol.~16, No.~3 (2022): 697-729.

	\bibitem[GHSS]{ds_bigpaper}
	M.~Gorelik, C.~Hoyt, V.~Serganova and A.~Sherman. \emph{The Duflo Serganova functor, vingt ans apr\`es}, Journal of the Indian Institute of Science, Vol.~102 (2022): 961-1000.

	\bibitem[HW]{HW}
	T.~Heidersdorf and R.~Weissauer. \emph{Cohomological tensor functors on representations of the general linear supergroup}, American Mathematical Society, Vol. 270, No. 1320 (2021).
	
	
	\bibitem[HS]{hochschild1953cohomology}
	G.~Hochschild and J.P.~Serre. \emph{Cohomology of {Lie} algebras},
	Annals of Mathematics (1953), 591--603.
	
	\bibitem[HPS]{HPS}
	H.~Crystal, I.~Penkov, and V.~Serganova. \emph{Integrable $\s\l(\infty)$‐modules and category $\OO$ for $\g\l(m|n)$}, Journal of the London Mathematical Society, Vol.~99, No.~2 (2019): 403-427.
	
	\bibitem[Kac]{kac1977lie}
	V.G.~Kac. \emph{{Lie} superalgebras}, Advances in mathematics, Vol.~26, No.~1 (1977): 8--96.
	
	\bibitem[Kos]{koszul1982graded}
	J.L.~Koszul. \emph{Graded manifolds and graded {Lie} algebras},
	Proceedings of the International Meeting on Geometry and Physics (Bologna),
	Pitagora (1982): 71--84.
	
	\bibitem[San]{sanna2014rational}
	G.~Sanna. \emph{Rational curves and instantons on the {Fano} threefold
		$ {Y_5}$}, arXiv preprint 1411.7994 (2014).
	
	\bibitem[S1]{serganova_KM}
	V.~Serganova. \emph{Kac–Moody superalgebras and integrability}, Developments and trends in infinite-dimensional Lie theory (2011): 169-218.
	
	\bibitem[S2]{serganova_superdim}
	V.~Serganova. \emph{On the superdimension of an irreducible representation of a basic classical Lie superalgebra}, Supersymmetry in Mathematics and Physics, LNM Vol.~2027 (2011): 253-273.

	
	\bibitem[SSh1]{splitting}
	V.~Serganova and A.~Sherman. \emph{Splitting quasireductive supergroups and volumes of supergrassmannians}, arXiv:2206.07693.
	
	\bibitem[SSh2]{localization}
	V.~Serganova and A.~Sherman. \emph{Localization theorem for homological vector fields}, arXiv:2210.13264.
	
	\bibitem[Sh]{sherman2021spherical}
	A.~Sherman. \emph{Spherical supervarieties}, Annales de l'Institut
	Fourier, Vol.~71 (2021): 1449--1492.
	
	\bibitem[V]{vaintrob1996normal}
	A.~Vaintrob. \emph{Normal forms of homological vector fields}, Journal of
	Mathematical Sciences, Vol.~82, No.~6 (1996): 3865--3868.
	
\end{thebibliography}
\end{document}